\newtheorem{thm}{Theorem}[section]
\newtheorem{prop}[thm]{Proposition}
\newtheorem{lem}[thm]{Lemma}
\newtheorem{cor}[thm]{Corollary}
\theoremstyle{definition}
\newtheorem{definition}{Definition}[section]
\newtheorem{remark}{Remark}
\newtheorem*{solution*}{Solution}
\newtheorem{ass}{Assumption}
\DeclareMathOperator{\dive}{div}
\DeclareMathOperator{\const}{const}
 \newcommand{\bbar}[1]{\setbox0=\hbox{$#1$}\dimen0=.2\ht0 \kern\dimen0 \overline{\kern-\dimen0 #1}}
 \DeclareMathOperator{\coker}{coker}
 \DeclareMathOperator{\End}{\ensuremath{\mathcal{E}\kern-.125em\mathpzc{nd}}}
 \DeclareMathOperator{\Hom}{\mathcal{H}\kern-.125em\mathpzc{om}}
 \DeclareMathOperator{\id}{id}
 \DeclareMathOperator{\Proj}{\mathcal{P}\kern-.125em\mathpzc{roj}}
 \renewcommand{\setminus}{\smallsetminus}
 \DeclareMathOperator{\spec}{Spec}
 \newcommand{\udot}{\ensuremath{{\lower .183333em \hbox{\LARGE \kern -.05em$\cdot$}}}}
 \newcommand{\cA}{\mathcal{A}}
 \newcommand{\cB}{\mathcal{B}}
 \newcommand{\cD}{\mathcal{D}}
 \newcommand{\cE}{\mathcal{E}}
 \newcommand{\cF}{\mathcal{F}}
 \newcommand{\cG}{\mathcal{G}}
 \newcommand{\cH}{\mathcal{H}}
 \newcommand{\cK}{\mathcal{K}}
 \newcommand{\cL}{\mathcal{L}}
 \newcommand{\cM}{\mathcal{M}}
 \newcommand{\cO}{\mathcal{O}}
 \newcommand{\cV}{\mathcal{V}}
 \newcommand{\cZ}{\mathcal{Z}}
\newcommand{\C}{\mathbb{C}}
 \newcommand{\N}{\mathbb{N}}
 \newcommand{\R}{\mathbb{R}}
  \newcommand{\Z}{\mathbb{Z}}
 \newcommand{\p}{\partial}
 \DeclareMathOperator{\sign}{sign}
 \DeclareMathOperator{\spann}{span}
\title[Self-adjoint extensions and the $\alpha$-calculus]{Self-adjointness criteria and self-adjoint extensions of the Laplace-Beltrami operator on $\alpha$-Grushin manifolds}
\author{Ivan Beschastnyi}
\address{Centre Inria d'Universit\'e C\^ote d'Azur, equipe MCTAO, Universit\'e C\^ote d'Azur, LJAD, Nice, France\\ 
CIDMA, Aveiro, Portugal
} \email{ivan.beschastnyi@inria.fr}
\author{Hadrian Quan}
\address{Department of Mathematics, University of Washington, Seattle, WA 98195, USA} \email{hadrianq@uw.edu}
\begin{document}
\date{}
\maketitle

\begin{abstract}
    The Grushin plane serves as one of the simplest examples of a sub-Riemannian manifold whose distribution is of non-constant rank. Despite the fact that the singular set where this distribution drops rank is itself a smoothly embedded submanifold, many basic results in the spectral theory of differential operators associated to this geometry remain open, with the question of characterizing self-adjoint extensions being a recent question of interest both in sub-Riemannian geometry and mathematical physics.

    In order to systematically address these questions, we introduce an exotic calculus of pseudodifferential operators adapted to the geometry of the singularity, closely related to the 0-calculus of Mazzeo arising in asymptotically hyperbolic geometry. Extending results of \cite{me, grushin,luca1}, this calculus allows us to give a criterion for essential self-adjointness of the Curvature Laplacian, $\Delta-cS$ for $c>0$ (here $S$ is the scalar curvature). When this operator is not essentially self-adjoint, we determine several natural self-adjoint extensions. Our results generalize to a broad class of differential operators which are elliptic in this calculus.
\end{abstract}

\section{Introduction}

\subsection{Problem statement and previous results}

In this article, we study the Laplace-Beltrami operator on $(n+1)$-dimensional $\alpha$-Grushin manifolds, which have recently garnered much attention in both the sub-Riemannian and mathematical physics community~\cite{grushin,Prandi,alessandro,bes_grushin,me,pozzoli2,EU,luca1,luca3}. These manifolds are defined as follows.
\begin{definition}
\label{def:grushin}
Given a real number $\alpha$, an $\alpha$-Grushin manifold is a triple $(M,\cZ,g)$ consisting of a smooth manifold $M$ of dimension $n+1$, a co-orientable embedded submanifold $\cZ\subset M$ of codimension one, which we call the singular set, and a metric $g$ which in a tubular neighborhood of $\cZ$ takes the form
\begin{equation}
\label{eq:metric_grushin}
g_\alpha = dx^2 + \frac{1}{|x|^{2\alpha}}g_{x,\cZ},
\end{equation}
where $g_{x,\cZ}$ is a smooth family of Riemannian metric on the level sets $x = const$ and $x$ is the distance to $\cZ$. 
\end{definition}

These manifolds have a naturally associated volume form $\omega$ and the Riemannian Laplace-Beltrami operator $\Delta$. However, both of those objects exhibit a singularity at $\cZ$. For this reason, a priori we can define $\Delta$ only on $C^\infty_c (M\setminus \cZ)$. The animating goal of this work began with the following two natural questions, inspired by the mathematical physics literature above, which considers using the scalar curvature $S$ as a geometrically natural singular potential to define an associated Schr\"{o}dinger operator.

\begin{center}
\textit{Question 1: If $\Delta$ is the Laplace-Beltrami operator of an $\alpha$-Grushin manifold, $S$ its scalar curvature and $c\in \R$ a constant, when is the operator $\Delta - c S$ essentially self-adjoint on $C^\infty_c(M\setminus \cZ)$ with respect to the Riemannian $L^2$-space?}
\end{center}

\begin{center}
\textit{Question 2: If $\Delta - c S$ is not essentially self-adjoint, what are its self-adjoint extensions?}
\end{center}

The motivation for those questions comes from the unusual properties of $\alpha$-Grushin manifolds. For values of $\alpha \in \N$ these $\alpha$-Grushin manifolds are well-defined metric spaces and whose geodesics can cross $\cZ$ without developing any singularity. All such geodesics are projections of a Hamiltonian system endowed with an energy Hamiltonian, suggesting the dynamics of these spaces enjoy a form of classical completeness. However, if we try to consider a corresponding quantization of this Hamiltonian system, we find that the Laplace-Beltrami operator can become self-adjoint, as was first proven in~\cite{grushin}. Physically this corresponds to a situation when a classical particle can pass through a singularity, while a quantum particle can not. This phenomenon is now known as \textit{quantum confinement}. Moreover, quantum properties depend significantly on the particular choice of quantization, parameterized here by the constant $c$. In the literature one can find values $c=1/3$~\cite{dewitt,woodhouse}, $c=2/3$~\cite{dewitt}, $c\in[0,2/3]$~\cite{woodhouse}. It should also be noted that for $n\geq 3$ and $c=(n-2)/(4(1-n))$, the operator $\Delta - cS$ is conformally invariant and is referred to as the conformal Laplacian. In~\cite{me} the first author with his collaborators proved that the quantum confinement is absent for $\alpha = n =1$ and $c>0$. The present work continues the investigation of this phenomenon.

Most of the known results for these types of spaces were developed for flat models, which can be described as $\R^{n+1}$ with $\cZ= \{x= 0\} \simeq \R^n$ and a metric
\begin{equation}
\label{eq:model_metric}
g_\alpha = dx^2 + \frac{1}{|x|^{2\alpha}}\left(\sum_{i=1}^n dy_i^2 \right).
\end{equation}
An orthonormal frame in this case is given by
$$
X_0 = \p_x, \qquad X_i = |x|^\alpha\p_{y_i}, \qquad i=1,\dots,n 
$$
and the Laplace-Beltrami operator has the form
\begin{equation}
\label{eq:model_operator}
\Delta^G = \p_x^2 +|x|^{2\alpha}\Delta_{\R^n} -\frac{\alpha n}{x}\p_x,
\end{equation}
where $\Delta_{\R^n}$ is the Euclidean Laplacian.

We first consider when the operator ~\eqref{eq:model_operator} is essentially self-adjoint on \, $C^\infty_c(\R^{n+1}\setminus \{x=0\})$ with respect to its Riemannian $L^2$-space. In this case, the volume form is given by
$$
\omega = \frac{dx \wedge dy_1 \wedge \dots \wedge dy_n}{|x|^{\alpha n}}.
$$
We would like to transform this to the standard Euclidean volume. For this reason, we make a unitary transform:
\begin{equation}
\label{eq:unit_trans}
u \mapsto |x|^\frac{\alpha n}{2}u.
\end{equation}
The Laplace-Beltrami operator in these new coordinates becomes 
$$
P = |x|^{-\frac{\alpha n}{2}} \Delta^G |x|^\frac{\alpha n}{2} = \p_x^2 + |x|^{2\alpha}\Delta_{\R^n} - \frac{\alpha (\alpha n+ 2)}{4x^2}.
$$
Performing a Fourier transform on the $y$ variables we obtain
$$
\hat{P}(\xi) = \p_x^2 - |x|^{2\alpha}|\xi|^2 - \frac{\alpha n(\alpha n + 2)}{4x^2}.
$$
It is clear that self-adjointness and self-adjoint extensions can be determined by this family of one-dimensional operators. One expects that $\Delta^G$ should be self-adjoint if $\hat{P}(\xi)$ are all self-adjoint as well. Each $\hat{P}(\xi)$ is a Schr\"odinger operator with an inverse square potential, whose self-adjoint properties are well-known~\cite{georgescu3}. This strategy was used in~\cite{EU}, where the authors proved the following theorem for $n=1$:

\begin{thm}[\cite{EU}]
Let $G$ be the $\alpha$-Grushin plane of dimension two. Depending on $\alpha$ the following statements hold: 
\begin{enumerate}
\item If $\alpha \in (-\infty,-3] \cup [1,+\infty)$, then $\Delta^G$ is essentially self-adjoint on $C^\infty(\R^{2}\setminus \cZ)$;
\item If $(-3,1)$, then $\Delta^G$ is not essentially self-adjoint on $C^\infty(\R^{2}\setminus \cZ)$ and
\begin{enumerate}
\item For $\alpha \in (-3,-1]$ the deficiency index is equal to two;
\item For $\alpha \in (-1,1)$ the deficiency index is infinite.
\end{enumerate}
\end{enumerate}
\end{thm}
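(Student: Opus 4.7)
My plan is to follow the roadmap sketched in the paragraph preceding the theorem. First I would apply the unitary conjugation $u \mapsto |x|^{\alpha/2}u$ to reduce $\Delta^G$ to
\[
P = \partial_x^2 + |x|^{2\alpha}\Delta_\R - \frac{\alpha(\alpha+2)}{4x^2},
\]
and then Fourier-transform in $y$ to realise $P$ as a direct integral $P = \int^\oplus \hat P(\xi)\, d\xi$ with fibres
\[
\hat P(\xi) = \partial_x^2 - |x|^{2\alpha}\xi^2 - \frac{\alpha(\alpha+2)}{4x^2}, \qquad \xi\in\R.
\]
Since unitary equivalence and direct-integral decomposition preserve essential self-adjointness and deficiency indices, the problem reduces to a fibrewise Weyl limit-point/limit-circle analysis of the family of one-dimensional Schr\"odinger operators $\hat P(\xi)$ on $L^2(\R_x)$ with initial domain $\CIc(\R\setminus\{0\})$.

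For each $\xi$ I would split $\hat P(\xi)$ as a direct sum of operators on the half-lines $(0,\infty)$ and $(-\infty,0)$ and apply Weyl's criterion at each of the four endpoints. At $x=\pm\infty$ the operator is always in limit point: the potential either grows polynomially (when $\alpha>0$, $\xi\neq 0$), is bounded (when $\alpha=0$), or decays like $|x|^{2\alpha}+1/x^2$ (when $\alpha<0$), in which case standard comparison with $-\partial_x^2$ suffices. The substantive analysis is at $x=0$, where one identifies the dominant term in $V(x)=\xi^2|x|^{2\alpha}+\alpha(\alpha+2)/(4x^2)$. If $\alpha\geq -1$ the inverse-square term dominates and the classical threshold for $-\partial_x^2+c/x^2$ gives limit point iff $c:=\alpha(\alpha+2)/4\geq 3/4$, equivalently $(\alpha-1)(\alpha+3)\geq 0$. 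If $\alpha<-1$ and $\xi\neq 0$, the repulsive super-singular term $\xi^2|x|^{2\alpha}$ dominates, and a short WKB computation produces a unique $L^2$-solution near zero (decaying roughly like $|x|^{|\alpha|/2}\exp(-|\xi|\,|x|^{1-|\alpha|}/(|\alpha|-1))$), placing $\hat P(\xi)$ in limit point. If $\alpha<-1$ and $\xi=0$, only the inverse-square term survives and the threshold from the first case recurs. Combining these, $\hat P(\xi)$ is essentially self-adjoint for every $\xi\in\R$ precisely when $\alpha\in(-\infty,-3]\cup[1,+\infty)$, which establishes part~(1).

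For part~(2) the deficiency indices are computed by counting which $\xi$-modes of $\hat P(\xi)$ fail to be limit point at $x=0$, each such failing fibre contributing deficiency $(2,2)$ (one unit per half-line of $\R\setminus\{0\}$). For $\alpha\in(-1,1)$ this failure occurs for a set of $\xi$ of positive Lebesgue measure, so the direct integral produces infinite deficiency. For $\alpha\in(-3,-1]$ the only failing fibre is the isolated zero-mode $\hat P(0)$, yielding deficiency two. The main technical obstacle I anticipate is the direct-integral deficiency bookkeeping in the regime $\alpha\in(-3,-1]$: since the zero-mode has Lebesgue measure zero as a fibre, one must either pass to a compact (cylindrical) $y$-reduction in which $\xi=0$ becomes a genuine direct summand, or carefully isolate the distributional role of the zero mode in specifying boundary behaviour at $x=0^\pm$, so that this singular fibre contributes a genuine deficiency of two rather than being absorbed as a measure-zero effect.
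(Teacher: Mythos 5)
Your reduction — the unitary conjugation $u\mapsto|x|^{\alpha/2}u$, the partial Fourier transform in $y$, and the fibrewise Weyl limit-point/limit-circle analysis — is exactly the strategy the paper attributes to \cite{EU} (the paper itself does not prove this theorem; it imports it), and it correctly handles part (1) and part (2b): the threshold $c=\alpha(\alpha+2)/4\geq\tfrac34$, equivalently $(\alpha-1)(\alpha+3)\geq0$, together with the fact that a positive-measure set of limit-circle fibres yields an infinite-dimensional space of $L^2$-sections of deficiency spaces, is the right computation there.

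The genuine gap is in part (2a), and it is not merely the ``bookkeeping obstacle'' you flag at the end — the step fails as written, in two ways. First, your case split misclassifies $\alpha=-1$: there $\xi^2|x|^{2\alpha}=\xi^2/x^2$ is itself inverse-square, so neither term dominates; they combine into $(\xi^2-\tfrac14)/x^2$, and the limit-circle condition $\xi^2-\tfrac14<\tfrac34$ holds on the whole interval $|\xi|<1$, not only at $\xi=0$. By your own counting rule this positive-measure failure would force \emph{infinite} deficiency at $\alpha=-1$, contradicting the claimed value $2$, so the endpoint needs a separate argument. Second, for $\alpha\in(-3,-1)$ the deficiency space of the direct integral is $\ker(A^*\mp i)=\int^{\oplus}\ker(\hat P(\xi)^*\mp i)\,d\xi$, a space of square-integrable sections over $(\R,d\xi)$; a single failing fibre at $\xi=0$ is a Lebesgue-null set and contributes dimension $0$, not $2$. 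The naive fibre count therefore outputs either $0$ or $\infty$ and can never produce the answer $2$; and your proposed fix of passing to a cylindrical $y$-reduction, where $\xi=0$ becomes an atom, proves a statement about the Grushin cylinder, not the plane. Whatever mechanism makes the isolated zero mode visible for the plane when $\alpha\in(-3,-1]$ (it cannot be an $L^2(dy)$ deficiency vector, since a $y$-independent solution is not square-integrable over $\R_y$), it must go beyond the fibrewise limit-point/limit-circle count, and that is the part of the proof missing from your proposal.
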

A precise definition of deficiency indices will be given later. For now it is just important that they measure how far a given operator is being from essentially self-adjoint. 

A similar result was proven earlier in~\cite{Prandi} for Grushin cylinders. Using exactly the same idea one can prove an analogous result for all $n\geq 1$.
\begin{thm}
\label{thm:sa_grushin_standard}
Let $G$ be the $\alpha$-Grushin plane of dimension $n+1$. Depending on $\alpha$ the following statements hold: 
\begin{enumerate}
\item $\Delta^G$ is essentially self-adjoint on $C^\infty(\R^{n+1}\setminus \cZ)$ if and only if $\alpha \in (-\infty,-3/n] \cup [1/n,+\infty)$.
\item Assume that $\Delta^G$ is not essentially self-adjoint. If $n = 1$ or $n=2$ and $\alpha\in (-3/n,-1]$ then deficiency index is equal to two. In all the remaining cases the deficiency index is infinite.
\end{enumerate}
\end{thm}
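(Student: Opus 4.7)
The plan is to follow the strategy sketched in the preceding paragraphs and in \cite{EU,Prandi}: after the unitary conjugation $u\mapsto|x|^{\alpha n/2}u$ and a partial Fourier transform in $y$, $\Delta^G$ becomes unitarily equivalent to the direct integral
$$\int^{\oplus}_{\xi\in\R^n}\hat P(\xi)\,d\xi,\qquad \hat P(\xi)=\partial_x^2-|x|^{2\alpha}|\xi|^2-\frac{\alpha n(\alpha n+2)}{4x^2},$$
acting fiberwise on $L^2(\R\setminus\{0\},dx)$. Each fiber has a potential that is either bounded or polynomially growing as $|x|\to\infty$, hence is limit-point (LP) there, so the entire analysis reduces to the behaviour at $x=0$.

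For part (1), I would apply the classical Kalf-Walter criterion (see e.g.~\cite{georgescu3}): the Schr\"odinger operator $-\partial_x^2+c/x^2$ on the half-line is LP at $0$ iff $c\geq 3/4$. Setting $c_0=\alpha n(\alpha n+2)/4$, the inequality $c_0\geq 3/4$ is equivalent to $\alpha n\leq -3$ or $\alpha n\geq 1$. The perturbation $|x|^{2\alpha}|\xi|^2$ is strictly subordinate to $1/x^2$ at $0$ when $\alpha>-1/n$, and strictly more singular than $1/x^2$ when $\alpha\leq -1/n$; in the latter case it independently forces LP at $0$ for every $\xi\neq 0$. Either way, essential self-adjointness of $\Delta^G$ is controlled by the single fiber $\hat P(0)$, giving the iff statement.

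For part (2), assume $\alpha\in(-3/n,1/n)$. The key dichotomy is the Lebesgue measure of the LC-set $\{\xi\in\R^n:\hat P(\xi)\text{ is LC at }0\}$. When $\alpha\in(-1/n,1/n)$ the inverse-square term dominates uniformly and the LC-set has positive measure, so a direct-integral count yields infinite-dimensional deficiency. When $\alpha\in(-3/n,-1/n]$, the LC-set collapses to $\{0\}$: $\hat P(0)=\partial_x^2-c_0/x^2$ has deficiency indices $(2,2)$ on $L^2(\R\setminus\{0\})$, namely one dimension from each of $\{x>0\}$ and $\{x<0\}$. For $n=1,2$ these two defect modes lift to an exactly two-dimensional deficiency subspace of $L^2(\R^{n+1})$, whereas for $n\geq 3$ additional tangential regularity in the $y$-directions along $\{x=0\}$ enlarges the space to be infinite-dimensional.

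The main obstacle is precisely this last ``lifting'' step. The naive direct-integral counting over the measure-zero set $\{0\}\subset\R^n$ would give zero deficiency, whereas the correct answer is $2$. Reconciling this requires an explicit ODE construction of the two defect vectors on each side of $\{x=0\}$ using the indicial roots of $\hat P(0)$, together with a uniqueness argument ruling out further $L^2(\R^{n+1})$ defect solutions; I expect this to be the most delicate step of the proof and the place where the pseudodifferential calculus developed later in the paper is needed to replace the direct-integral heuristic with a rigorous boundary trace.
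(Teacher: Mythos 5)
Your overall route --- conjugate by $|x|^{\alpha n/2}$, take a partial Fourier transform in $y$, and analyse the fibre operators $\hat P(\xi)$ by Weyl limit-point/limit-circle theory --- is exactly the one the paper intends; its entire "proof" of this theorem is the remark that the argument of \cite{EU} for $n=1$ carries over. However, your case analysis contains a concrete error. Comparing $|x|^{2\alpha}|\xi|^2$ with $x^{-2}$ near $x=0$ is a comparison of powers of $x$ and does not see $n$: the perturbation is subordinate to the inverse-square term iff $2\alpha>-2$, i.e.\ $\alpha>-1$, and dominates it iff $\alpha<-1$ --- not $\alpha \lessgtr -1/n$ as you write ($n$ enters only through the coefficient $c_0=\alpha n(\alpha n+2)/4$). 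This is not cosmetic. For $n=2$ and $\alpha\in(-1,-1/2]$ your criterion puts the fibres in the "barrier forces LP for $\xi\neq 0$" regime and predicts deficiency index two, whereas in fact $|x|^{2\alpha}\ll x^{-2}$ there, every fibre is limit-circle at $0$ (since $c_0<3/4$), and the deficiency index is infinite --- consistent with the theorem, whose deficiency-two interval is $(-3/n,-1]$, with endpoint $-1$, not $-1/n$. The same error creates your phantom "$n\geq 3$ with LC-set $=\{0\}$" case: for $n\geq 3$ one has $-3/n\geq -1$, so every non-essentially-self-adjoint $\alpha$ satisfies $\alpha>-1$, the LC-set is all of $\R^n$, and infinite deficiency follows from the direct-integral count with no appeal to "tangential regularity".

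Second, the step you yourself flag as the main obstacle is the crux and is not supplied. For $\alpha\in(-3/n,-1)$ (so $n\leq 2$) each $\hat P(\xi)$ with $\xi\neq 0$ is essentially self-adjoint (the barrier $|\xi|^2|x|^{2\alpha}$ with $2\alpha<-2$ dominates $3/(4x^2)$ near $0$), so the direct integral of the fibre closures is already a self-adjoint operator; the claimed deficiency index two can only arise because $D_{\min}(\Delta^G)$ is \emph{strictly smaller} than that direct integral, i.e.\ graph-norm approximation by $C^\infty_c(\R^{n+1}\setminus\cZ)$ fails to be uniform as $\xi\to 0$. Your proposed fix ("lift the two defect modes of $\hat P(0)$") does not engage with this: an $L^2(d\xi)$-section supported on the measure-zero set $\{\xi=0\}$ is the zero element, so there is nothing to lift. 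What is needed is an explicit pair of elements of $D_{\max}\setminus D_{\min}$ (cutoffs of the $|x|^{\lambda_\pm}$ profiles, constant in $y$) together with a proof that the quotient is exactly two-dimensional; this is the content of \cite{EU} for $n=1$. Note also that the $\alpha$-calculus developed later in the paper assumes $\alpha>-1$, so it cannot serve as the rigorous replacement you invoke for precisely the regime $\alpha\in(-3/n,-1]$ where it would be needed. As written, your argument covers part (1) and the infinite-deficiency cases with $\alpha>-1$, but not the deficiency-two cases.
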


There is no conceptual change if we add a scalar curvature term to $\Delta^G$. The scalar curvature for the $\alpha$-Grushin plane is given by (see Appendix A)
\begin{equation}
\label{eq:scalar_alpha}
S = -\frac{\alpha n(\alpha n + \alpha + 2)}{x^2},
\end{equation}
thus $S$, as a multiplication operator, commutes with the unitary transformation ~\eqref{eq:unit_trans}. Hence after we apply the latter and the partial Fourier transform in the $y$ variable, the resulting operator will have exactly the same form as $\hat{P}(\eta)$ but with a different constant in the numerator of the inverse square term. Consequently, we can indicate three regions on the space $(\alpha,n,c)$ of parameters: where $\Delta^G$ has deficiency index equal to zero, two and infinity.

We should also mention that unsurprisingly the properties of $\Delta - cS$ are closely related to the Schr\"odinger operator with the inverse square potential. Its natural domains were previously studied in~\cite{georgescu3}.

Apart from the already mentioned work~\cite{grushin}, there are scant results in the case of general $\alpha$-Grushin manifolds, and even fewer for more general sub-Riemannian structures. In~\cite{luca1} sufficient conditions for self-adjointness were obtained for a large class of singular manifolds which are even more general than the $\alpha$-Grushin manifolds we study here. Later the same group of authors generalized their results to sub-Riemannian structures~\cite{luca2}. Compared to their work, the method presented here can determine not only when $\Delta$ is not essentially self-adjoint, but can further explicitly construct many of its self-adjoint extensions. This generalizes the results of~\cite{pozzoli2} beyond the flat model $\Delta^G$. Also, operators in~\cite{guerra} can be studied using the $\alpha$-calculus we develop in this paper.

\subsection{Main tools and results}

In this paper we make the following set of assumptions:
\smallskip
\begin{enumerate}[label=(A\arabic*)]
\item The manifold $M$ is closed;
\item The singular set $\cZ$ has only one connected component;
\item $\alpha > -1$.
\end{enumerate}
\smallskip

All of these assumptions are technical. We need the closeness of $M$ in order to not consider the behavior of our operators at infinity, but adding the appropriate analysis can deal also with non-compact cases. Similarly, $\cZ$ can have more than one component with different orders of singularity $\alpha$. All the results will be local and thus each singularity can be treated separately.

The assumption that $\alpha >-1$ is the most important one since it is the result of the method we use. We build a pseudo-differential calculus, the $\alpha$-calculus, that is closely related to the $0$-calculus~\cite{mazzeo1987meromorphic}. In particular, when $\alpha = 0$ our calculus reduces to the $0$-calculus. This construction allows us to find a left parametrix for $\Delta - cS$ and determine quite explicitly the minimal and maximal domains. Papers~\cite{mendoza,bes_grushin} use this technique for studying elliptic wedge operators and can be compared to other works where suitable adapted PDO calculi are constructed, see for example ~\cite{melrose_b, mazzeo1987meromorphic, mazzeo, epstein_melrose} and especially \cite{albin2022sub} which featured a calculus adapted to the geometry of general contact sub-Riemannian manifolds.

The cases $\alpha = -1$ and $\alpha<-1$ require a separate consideration, which is absent from this work. When $\alpha = - 1$ the $x^2(\Delta - cS)$ is a $b$-operator and one can use the $b$-calculus to obtain similar results. For $\alpha < -1$ one needs to develop a different calculus which seems, however, to be very similar to the cusp calculus (see \cite{mazzeo1998pseudodifferential} and more generally \cite{vaillant2001index}). This is one setting we plan to pursue in future works, as much less is known for Grushin-type manifolds in this range of $\alpha$.

It should be noted that there also exists an alternative technique to prove the existence of left parametrices that uses the notion of Lie groupoids~\cite{nistor_fred}. It is certainly possible to find the minimal domain this way, however, it is not very clear how to obtain a good description of the maximal domain in this general and abstract setting.

\medskip

$\alpha$-Grushin manifolds have some properties that simplify their study. For example, in Appendix A we prove the following proposition.
\begin{prop}
\label{prop:curvature_ass}
Consider a $\alpha$-Grushin manifold of dimension $n+1$ with $\alpha>-1$ and compact singular set $\cZ$. Then
\begin{equation}
\label{eq:scalar_ass}
S = -\frac{\alpha n(\alpha n + \alpha + 2)}{x^2} + o(|x|^{-2}), \qquad x\to 0.
\end{equation}
\end{prop}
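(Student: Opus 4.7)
The plan is to reduce to the warped-product case and then control the lower-order corrections coming from the $x$-dependence of the transverse family $g_{x,\cZ}$. Near a point of $\cZ$ pick coordinates $(x,y)$ with $y=(y_1,\ldots,y_n)$ on $\cZ$ and a smooth family of $g_{x,\cZ}$-orthonormal vector fields $e_1(x,y),\ldots,e_n(x,y)$ tangent to $\cZ$. Then $X_0=\partial_x$ together with $X_i=|x|^{\alpha}e_i$ form a $g_\alpha$-orthonormal frame on $M\setminus\cZ$, with dual coframe $\theta^0=dx$ and $\theta^i=|x|^{-\alpha}\eta^i$, where $\eta^i$ is the $g_{x,\cZ}$-dual to $e_i$.

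First I treat the genuine warped-product case where $g_{x,\cZ}\equiv g_{0,\cZ}$ is $x$-independent. There $g_\alpha=dx^2+f(x)^2 g_{0,\cZ}$ with $f(x)=|x|^{-\alpha}$, and the classical formula for the scalar curvature of a warped product gives
\[
S \;=\; \frac{S_{\cZ,0}}{f^2}-2n\,\frac{f''}{f}-n(n-1)\,\frac{(f')^2}{f^2}.
\]
From $f'/f=-\alpha/x$ and $f''/f=\alpha(\alpha+1)/x^2$, algebraic simplification yields
\[
S \;=\; |x|^{2\alpha}\,S_{\cZ,0}\;-\;\frac{\alpha n(\alpha n+\alpha+2)}{x^2},
\]
and the hypothesis $\alpha>-1$ forces $|x|^{2\alpha}=o(|x|^{-2})$, proving \eqref{eq:scalar_ass} in this case.

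For general $g_{x,\cZ}$, I would compute the Levi-Civita connection in the frame $\{X_0,X_i\}$ via Cartan's first structure equation, tracking carefully the powers of $1/x$. The relevant brackets are
\[
[X_0,X_i]=\frac{\alpha}{x}X_i+\sum_{j} b_{ij}(x,y)\,X_j, \qquad [X_i,X_j]=|x|^{\alpha}\sum_{k} c_{ij}^{k}(x,y)\,X_k,
\]
with $b_{ij},c_{ij}^k$ smooth up to $\cZ$ (the $b_{ij}$ record $\partial_x e_i$ expanded in the basis $\{e_j\}$). Hence the connection 1-forms split as $\omega^i_0=\tfrac{\alpha}{x}\theta^i+O(1)$ and $\omega^i_j=O(|x|^{\alpha})$ for $i,j\ge 1$, with $O(|x|^{\alpha})=o(|x|^{-1})$ under $\alpha>-1$. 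Inserting these into Cartan's second equation $\Omega^a_b=d\omega^a_b+\omega^a_c\wedge\omega^c_b$ and contracting to obtain $S$, a singular term of order $|x|^{-2}$ can arise only from differentiating the $1/x$ piece of $\omega^i_0$ in $x$, or from squaring such a piece through a wedge product. In both cases the coefficient is dictated purely by $f(x)=|x|^{-\alpha}$, so the leading singular part reproduces exactly the warped-product answer $-\alpha n(\alpha n+\alpha+2)/x^2$. Every other contribution is strictly of lower order: products involving $\omega^i_j=O(|x|^{\alpha})$ are at worst $O(|x|^{\alpha-1})$, bounded $b_{ij}$-contributions are $O(|x|^{-1})$, and the intrinsic curvature of the leaves enters only through $|x|^{2\alpha}S_{\cZ,x}$. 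Under $\alpha>-1$ each of these is $o(|x|^{-2})$.

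The main obstacle is the bookkeeping in this last step: one must verify carefully that no cross-term between the warping singularity and the $x$-variation of $g_{x,\cZ}$ conspires to produce a hidden $|x|^{-2}$ contribution. The threshold $\alpha>-1$ is sharp, since that is precisely where $|x|^{2\alpha}$ and $|x|^{\alpha-1}$ first become comparable to $|x|^{-2}$, consistent with the paper's remark that different calculi (the $b$- and cusp-calculi) are needed in the regime $\alpha\le -1$.
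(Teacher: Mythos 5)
Your argument is correct and follows essentially the same route as the paper's Appendix A: an orthonormal frame $\{\partial_x,\,|x|^\alpha e_i\}$ adapted to the foliation by level sets of $x$, an exact computation of the warped-product (model) contribution $-\alpha n(\alpha n+\alpha+2)/x^2$, and an order analysis showing that every term involving the $x$-dependence of $g_{x,\cZ}$ or the intrinsic geometry of the leaves is $o(|x|^{-2})$ precisely when $\alpha>-1$. The only cosmetic differences are that you quote the classical warped-product scalar curvature formula for the leading term where the paper computes it directly from Cartan's structure equations (via Koszul's formula for the Christoffel symbols in terms of the structure constants $\bar c^j_{0i}$, $\bar c^k_{ij}$), and that your bound $\omega^i_j=O(|x|^\alpha)$ should really read $O(1)+O(|x|^\alpha)$ because the $\theta^0$-component of $\omega^i_j$ is merely bounded --- which changes nothing, since $O(|x|^{-1})\cdot O(1)=o(|x|^{-2})$.
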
 
Thus we see that the main term in the asymptotics of the curvature depends only on the order of singularity and the dimension of the manifold, but does not depend on the geometry of the manifold $M$ or the singular set $\cZ$. This is a very useful property for the parametrix construction.

A central role in all of the constructions and results is played by the \textit{indicial operator} of $x^2(\Delta - cS)$ that we denote by $I(x^2(\Delta - cS))$. It can be defined as follows. Consider the tubular neighborhood from Definition~\ref{def:grushin} and start stretching it by applying the change of variables $x\mapsto s x$, where $s >0$ is a parameter. Then the coefficient in front of the principal term will be an operator of the form
\begin{equation}
\label{eq:indicial_op}
I(x^2(\Delta - cS)) = (x\p_x)^2 -(1+\alpha n)x \p_x - c\alpha n(\alpha n + \alpha + 2).
\end{equation}
Note that this is true only for $\alpha>-1$. If we formally replace $x\p_x$ with $\lambda$, we obtain the \textit{indicial polynomial} $p(\lambda)$
\begin{equation}
\label{eq:ind_pol}
p(\lambda) = \lambda^2 -(1+\alpha n)\lambda - c\alpha n(\alpha n + \alpha + 2).
\end{equation}
Thanks to Proposition~\ref{prop:curvature_ass}, this polynomial does not depend on the $y$ variable nor the geometry of the singular set. In this case, one says that $x^2(\Delta-cS)$ has\textit{ constant indicial roots}, which is an important condition in the parametrix construction in many of the classical geometric calculi.

All of the properties related to the self-adjointness can be read off the indicial polynomial and its roots $\lambda_{\pm}$. Let 
$$
\mu = (1+\alpha n)^2 + 4c\alpha n(\alpha n + \alpha + 2)
$$
be the discriminant of the indicial polynomial. It turns out that the value of the discriminant codifies very well the situation when $\Delta - cS$ is an essentially self-adjoint operator.

\begin{thm}
\label{thm:self-adjoint_general}
Let $(M,\cZ,g_\alpha)$ be a $\alpha$-Grushin manifold of dimension $n+1$ with $\alpha> -1$ and let $c\in \R$. If $\mu \neq 4$ then the operator $\Delta - cS$ with domain $C^\infty_c(M\setminus \cZ)$ is essentially self-adjoint on $L^2(M,\omega)$ if and only if
$$
\mu > 4.
$$
When $\Delta - cS$ is not essentially self-adjoint, it has infinite deficiency indices.
\end{thm}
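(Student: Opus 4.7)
The strategy is to use the $\alpha$-calculus parametrix to reduce essential self-adjointness of $\Delta - cS$ to a question about the asymptotic behavior of elements of the maximal domain near $\cZ$. Since $\Delta - cS$ is symmetric on $C^\infty_c(M\setminus\cZ)$ and $M$ is closed, essential self-adjointness is equivalent to the equality of the minimal and maximal domains $\cD_{\min}$ and $\cD_{\max}$, with the deficiency indices controlled by $\dim(\cD_{\max}/\cD_{\min})$. The plan is to compute this quotient explicitly via a parametrix for the weighted operator $x^2(\Delta - cS)$.

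First I would construct a left parametrix in the $\alpha$-calculus. After multiplication by $x^2$, the operator $x^2(\Delta - cS)$ is elliptic in this calculus and, by Proposition~\ref{prop:curvature_ass}, has the constant indicial polynomial $p(\lambda)$ from~\eqref{eq:ind_pol} with roots $\lambda_\pm = \frac{(1+\alpha n) \pm \sqrt{\mu}}{2}$. The assumption $\mu \neq 4$ ensures these roots are distinct, so $p(\lambda)^{-1}$ is meromorphic with simple poles. Inverting the indicial family along a suitable contour, and iteratively improving the error by composition within the calculus, should yield a parametrix $Q$ with $Q \circ x^2(\Delta - cS) = I - R$ and $R$ residual. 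This produces a regularity theorem: any $u \in \cD_{\max}$ admits a polyhomogeneous expansion
\begin{equation*}
u \sim x^{\lambda_-} u_-(y) + x^{\lambda_+} u_+(y) + \text{(higher order terms)}
\end{equation*}
near $\cZ$, where $u_\pm$ are distributions on $\cZ$.

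Next I would determine which leading terms are compatible with membership in $L^2(M,\omega)$. Since $\omega$ is asymptotic to $|x|^{-\alpha n}\,dx\,dy_\cZ$ near $\cZ$, a term $x^\lambda v(y)$ is locally $L^2$ iff $\mathrm{Re}(\lambda) > (\alpha n - 1)/2$. Plugging in the explicit roots, one checks that $\lambda_+$ always satisfies this threshold, while $\lambda_-$ does so iff $\mu < 4$: when $\mu > 0$ this reduces to $\sqrt{\mu} < 2$, and when $\mu < 0$ both complex conjugate roots share the real part $(1+\alpha n)/2 > (\alpha n - 1)/2$. Consequently, if $\mu > 4$ then every $u \in \cD_{\max}$ must satisfy $u_- = 0$, and a standard cutoff/mollification argument using the rate $x^{\lambda_+}$ of vanishing shows that $u$ is approximable by $C^\infty_c(M\setminus\cZ)$ functions in the graph norm, yielding $\cD_{\max} = \cD_{\min}$ and essential self-adjointness. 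Conversely, if $\mu < 4$ then both $u_\pm$ are unconstrained, and the boundary-trace map $u \mapsto (u_-, u_+)$ descends to an isomorphism of $\cD_{\max}/\cD_{\min}$ onto an infinite-dimensional space of pairs of distributions on $\cZ$, giving infinite deficiency indices.

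The main obstacle I anticipate is technical rather than conceptual: one must verify that $x^2(\Delta - cS)$ is fully elliptic in the $\alpha$-calculus (in particular that its normal operator on the front face is invertible, which is exactly where the $\mu \neq 4$ hypothesis enters), establish the correct mapping properties of the parametrix and residual on the weighted $L^2$ spaces arising from $\omega$, and justify that the polyhomogeneous expansion holds uniformly along $\cZ$. The graph-norm approximation in the $\mu > 4$ case requires a commutator estimate $[\Delta - cS, \chi_\varepsilon] u \to 0$ in $L^2$ as the cutoff $\chi_\varepsilon$ is removed, which is where the margin between $\mathrm{Re}(\lambda_+)$ and the $L^2$ threshold must be exploited; establishing this quantitatively is the crux of the argument.
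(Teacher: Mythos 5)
Your proposal is correct and follows essentially the same route as the paper: a left parametrix for $x^2(\Delta-cS)$ in the $\alpha$-calculus, a polyhomogeneous expansion of elements of $D_{\max}$ with leading exponents $\lambda_\pm=\tfrac{1}{2}\bigl(1+\alpha n\pm\sqrt{\mu}\bigr)$, and a comparison of these exponents against the $L^2$ threshold $\Re(\lambda)>(\alpha n-1)/2$ and the minimal-domain threshold $\Re(\lambda)>(3+\alpha n)/2$, exactly as in Theorems~\ref{thm:closure} and~\ref{thm:max_domain_full}. The only minor deviation is that you propose a cutoff/commutator argument to place $x^{\lambda_+}u_+$ in $D_{\min}$ when $\mu>4$, whereas the paper instead reads this off from the identification $D_{\min}=x^{2-\frac{1+n}{2}}H^2_\alpha(M,\omega_\alpha)$ obtained from left semi-Fredholmness; both yield the same threshold $\sqrt{\mu}>2$, so this is a cosmetic difference rather than a gap.
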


The idea of the proof is to characterize as explicitly as possible the closure (or the minimal domain) and the adjoint (or the maximal domain) of $\Delta-cS$. Recall that $D_{\min}(\Delta - cS)$ is described as the closure of $C^\infty_c(M\setminus \cZ)$ in the operator norm
$$
\|u\|_{\Delta - cS} = \|u\|_{L^2(M,\omega)} + \|(\Delta - cS)u\|_{L^2(M,\omega)},
$$
while the domain of the adjoint for a real symmetric differential operator coincides with
$$
D_{\max}(\Delta - cS) = \{u\in L^2(M,\omega)\,:\,(\Delta - cS) \in L^2(M,\omega) \}.
$$
The operator $\Delta - cS$ will be essentially self-adjoint if and only if 
$$
D_{\min}(\Delta - cS) = D_{\max}(\Delta - cS).
$$
In this case, the closure is the unique self-adjoint extension of our operator.

By the classical theorem of Von Neumann~\cite[Theorem 3.1]{sa_general}, we know that
$$
D_{\max}(\Delta - cS) = D_{\min}(\Delta - cS) \oplus \ker ((\Delta - cS)^* +i) \oplus \ker ((\Delta - cS)^* -i).
$$
The spaces $\ker ((\Delta - cS)^* \pm i)$ are known as \textit{deficiency spaces} and their dimensions, usually referred to as \textit{deficiency numbers}, are a measure of how far the closure of an operator is from being self-adjoint.

Using the $\alpha$-calculus we are able to find explicitly for $\mu \neq 4$ both the closure and the adjoint of $\Delta - cS$. Comparison between the two gives exactly the result of Theorem~\ref{thm:self-adjoint_general}. The problem with $\mu = 4$ is that the left-parametrix construction fails in this case. We still can say something about both the closure and the adjoint of $\Delta - cS$ (see Theorems~\ref{thm:closure} and~\eqref{eq:domain_max}), however, the information we are able to retrieve is not enough to determine whether this operator is essentially self-adjoint. 

If we compare Theorem~\ref{thm:self-adjoint_general} to Theorem~\ref{thm:sa_grushin_standard} when $c=0$, we get exactly the same result except the value $\alpha = 1/n$, which is not covered by Theorem~\ref{thm:self-adjoint_general}. In Theorem~\ref{thm:closure} we show that for all possible values of the discriminant $\mu \neq 4$ the closure of $\Delta - cS$ is a single functional space, which we describe explicitly. For $\mu = 4$ our technique does not work, but it is known that in the critical case for the Schr\"odinger operator with the inverse square potential its closure becomes slightly bigger~\cite{georgescu3}, so that the resulting operator is essentially self-adjoint. For $\alpha = n = 1$, $\mu = 4$ self-adjointness of $\Delta - cS$ was proven in~\cite{me} using the fact that this operator is positive. One can expect that a similar proof should hold for all the other critical cases.

Now that we have established that $\Delta-cS$ is not essentially self-adjoint when $\mu < 4$, we would also like to construct all of its self-adjoint extensions. It is known that any self-adjoint extension must lie between the minimal and maximal domains
\begin{equation}
\label{eq:domain_flag}
D_{\min}(\Delta -cS) \subset D_{s.a.}(\Delta -cS) \subset D_{\max}(\Delta -cS).
\end{equation}
Another classical Theorem by Von Neumann~\cite[Theorem 3.4]{sa_general} states that all of the self-adjoint extensions of $\Delta - cS$ are in one-to-one correspondences with unitary operators between the two deficiency spaces $\ker ((\Delta - cS)^* \pm i)$. In the case of Grushin manifolds this can be made much more explicit. The following theorem plays a central role in this description.

\begin{thm}
\label{thm:phg_Grushin}
Let $(M,\cZ,g_\alpha)$ be a $\alpha$-Grushin manifold of dimension $n+1$ with $\alpha> -1$. Let $c\in \R$ and $\mu\neq 4$. Denote 
\begin{equation}\label{eq:theta_def}
    \Theta = \{(1+\alpha)i + j \in \R_{\geq 0} \,:\, i,j\in \N_0 \}
\end{equation}
Suppose $u\in x^\delta L^2(M,\omega_\alpha)$, then solutions of 
$$
(\Delta - cS) u = 0,
$$
admit a weak asymptotic expansion of the form
\begin{equation}
\label{eq:polyhom_ass}
u = \sum_{\substack{\theta \in \Theta\\ p\in \{0,1\}}}  x^{\lambda_+ + \theta}\,(\log x)^p \, u_{\theta,p}^+(y) \; + \sum_{\substack{\theta \in \Theta\\ p\in \{0,1\} }}  x^{\lambda_- + \theta}\,(\log x)^p \, u_{\theta,p}^-(y),  \quad  u_{\theta,p}^\pm \in H^{-r_{\pm,\theta}}(\cZ)
\end{equation}
where $r_{\pm,\theta}=\lambda_\pm + (i+j)-\delta+\tfrac12$. Here we say that this is a weak asymptotic expansion in the sense that there is an expansion of the integral $\int_\cZ u(x,y)\chi(y)dy$ for any test function. A similar asymptotic expansion with $x$ replaced by $-x$ holds when $x\to 0-$. 
\end{thm}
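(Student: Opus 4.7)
My plan is to derive the expansion from the left parametrix of $\Delta - cS$ in the $\alpha$-calculus built earlier. The hypothesis $\mu\ne 4$ guarantees that the indicial polynomial has simple roots and that the operator is fully elliptic in the calculus, so one obtains a left parametrix $Q$ with
$$
Q\,(\Delta - cS) \;=\; \mathrm{Id} - R,
$$
where $R$ is an $\alpha$-smoothing operator whose Schwartz kernel is polyhomogeneous conormal on the $\alpha$-double space of $M$. A normal-operator analysis at the left boundary face of this double space shows that the kernel of $R$ expands there with exponents exactly in $\{\lambda_\pm + \theta : \theta \in \Theta\}$; the $(1+\alpha)$-shifts arise from the transverse generators $x^{1+\alpha}\partial_y$ of the calculus, and the integer shifts arise from the smooth Taylor expansion of $g_{x,\cZ}$ about $x=0$ together with the $o(|x|^{-2})$ remainder in Proposition~\ref{prop:curvature_ass}.

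Given $u\in x^\delta L^2(M,\omega_\alpha)$ with $(\Delta - cS)u = 0$, applying $Q$ yields $u = Ru$. Pairing against a test function $\chi$ on $\cZ$ and unfolding the left-face expansion of the kernel of $R$ produces the claimed weak expansion. The coefficient $u_{\theta,p}^\pm$ is, by construction, the distribution on $\cZ$ obtained by integrating $u$ against the $x^{\lambda_\pm + \theta}(\log x)^p$-term in that expansion; its membership in $H^{-r_{\pm,\theta}}(\cZ)$ then follows from the boundedness of $R$ between suitable weighted Sobolev spaces, with the $+\tfrac12$ appearing through the usual trace loss in the normal direction and the $(i+j)$ penalty counting how many times one has had to invert the indicial operator and differentiate transversally in order to reach order $\theta$.

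The $\log$ factor is needed exactly at resonances where $p(\lambda_\pm + \theta) = 0$; since the indicial polynomial has degree two, each resonance produces at most a single power of $\log x$, and the standing assumption $\mu\ne 4$ excludes the one case where a double root of $p$ would force an extra $\log x$ in the leading asymptotic. The mirror case $x\to 0^-$ is handled by running the argument on the opposite side of $\cZ$, and the weakness of the expansion is built in because the coefficients on $\cZ$ are only distributions.

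I expect the principal obstacle to be the bookkeeping in the parametrix step: showing that the left-face index set is generated by $\{1+\alpha,1\}$ (rather than the smaller-looking set $\{2(1+\alpha),1\}$ one might predict from the flat model alone) and then carefully tracking the Sobolev regularity of the successive coefficients under the iterative solution of the indicial transport equations. Both tasks follow the pattern of Mazzeo's $0$-calculus expansions~\cite{mazzeo1987meromorphic}, but must be adapted to the mildly different scaling of the $\alpha$-calculus introduced here.
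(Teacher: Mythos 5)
Your route is sound in outline but is genuinely different from the one the paper takes. The paper does not pass through the large‑calculus remainder at all: it first upgrades $u$ to $x^\delta H^\infty_\alpha$ using only the \emph{small}-calculus (symbolic) parametrix, then rewrites $x^2(\Delta-cS)u=0$ as $I(L)u=Eu$ with $E$ collecting the $x^\theta$-weighted lower-order terms, takes the Mellin transform to get $p(s)u_M(s)=(Eu)_M(s)$, and iteratively continues $u_M$ meromorphically strip by strip (strips of width $\theta\in\Theta$), reading off each coefficient $u^\pm_{\theta,p}$ as a residue at a shifted indicial root; the loss of one order of $H^{-N}(dy)$ per strip comes from the $x^{1+\alpha}\partial_y$ terms in $E$ and yields the exponent $r_{\pm,\theta}$ directly. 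Your version instead derives $u=Ru$ from $G\,x^2(\Delta-cS)=\mathrm{Id}-\pi_{\ker}$ and pulls the expansion from the left-face index set of $\pi_{\ker}$. That works, but note three things. First, you still need the preliminary conormal regularity $u\in x^\delta H^\infty_\alpha$ before the operator identity can legitimately be applied to $u$ — this is exactly the small-calculus step the paper performs. Second, all of the substantive content migrates into the index-set computation for $\pi_{\ker}$: as stated, Theorem~\ref{thm:param} gives $E_{10}=\Sigma^+(\delta)$, a \emph{finite} set of indicial roots, and to get the full family $\lambda_\pm+\Theta$ with the correct single $\log$ at resonances you must carry out the same iterative indicial-equation analysis you would otherwise do directly on $u$ — so your approach is not really shorter, it just relocates the work (you correctly identify this as the principal obstacle). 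Third, "boundedness of $R$ between weighted Sobolev spaces" will not by itself produce the precise coefficient regularity $H^{-r_{\pm,\theta}}(\cZ)$ with $r_{\pm,\theta}=\lambda_\pm+(i+j)-\delta+\tfrac12$; that count of transverse derivatives is exactly what the strip-by-strip Mellin continuation tracks, and you would have to build the analogous bookkeeping into the expansion of the kernel of $\pi_{\ker}$. What your approach buys is a cleaner conceptual statement (solutions lie in the range of a polyhomogeneous projector); what the paper's approach buys is that the distributional regularity of the coefficients and the placement of the $\log$ terms fall out of the argument with no appeal to the fine structure of the large calculus.
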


This theorem tells us that the elements of $\ker (\Delta - cS)$ have relatively `nice' asymptotics. A priori there is no reason to believe that $u_{\pm,\theta,p}$ are smooth. However, if the principal coefficient in this asymptotic expansion is a smooth function of $\cZ$, then, in fact, all the remaining coefficients will be smooth in turn. We will show, in fact, that the graph closure of functions admitting such asymptotics are dense in the domain of the adjoint (see Theorem~\ref{thm:max_domain_full}). This explicit characterization of the domain of the adjoint is what allows us to use the method of asymmetry forms to find many natural families of self-adjoint extensions~\cite[Section 3.4]{sa_general}. It turns out that the coefficients of~\eqref{eq:polyhom_ass} in front of the $x^{\lambda_\pm}$ terms will completely determine these extensions.  

There are various approaches for construction of self-adjoint extensions. One popular method is Krein-Vi\v{s}ik-Birman theory~\cite{ale_gallone_book}; this approach was used in~\cite{pozzoli2} to construct self-adjoint extensions to the Laplace operator on Grushin cylinder for $\alpha\in (0,1)$. However, this approach requires that the operator of interest is positive-definite. While this is true for the Laplacian itself, it may not be true for perturbations via the scalar curvature. Another approach uses the theory of asymmetry forms (see, for example,~\cite{sa_general}) and unitary operators. This method can also handle non-positive operators. For example, self-adjoint extensions of the Schr\"odinger operator with an inverse square potential were completely classified in~\cite{sa_general}. When dealing with PDEs instead, it can be difficult to apply it directly even in the case of Riemannian manifolds with boundary~\cite{sa_riem}. A particular case where this method was successfully applied is self-adjoint extensions of the Laplace operator on a bounded domain of $\R^n$~\cite{Marilena}. To deal with functional analytic problems, the authors introduce a generalized Green's identity that can be used for functions in $D(\Delta^*)$, but which does not extend easily to our setting.

Instead of classifying all possible self-adjoint extensions, we recover those extensions which seem to be the most important, as was done in~\cite{pozzoli2}. Even though there is an infinite number of self-adjoint extensions, most of the time in the literature one encounters Dirichlet, Neumann, or mixed boundary conditions. In~\cite{pozzoli2}, the authors find a slightly bigger family that may be viewed as their natural generalizations. Here, we use a similar idea but via different techniques and find that the general manifold case is far richer and admits a greater variety of self-adjoint extensions. The key idea is in our study is that whenever $\Delta-cS$ has two distinct indicial roots the elements of $D_{max}(\Delta - cS)/D_{min}(\Delta - cS)$ locally must be of the form
$$
u = u^r_+ + u^r_- + u^l_+ + u^l_- \mod D_{\min}(\Delta - cS).
$$  
where
\begin{align*}
u^r_{\pm}(x,y) &= 
\begin{cases}
a^r_{\pm}(y)x^{\lambda_-}(1+ o(1)), & x>0,\\
0, & x<0;
\end{cases} \\
u^l_{\pm}(x,y) &= 
\begin{cases}
a^l_{\pm}(y)(-x)^{\lambda_+}(1+ o(1)), & x<0,\\
0, & x>0.
\end{cases}
\end{align*}
Here $u^{r,l}_\pm \in \ker (\Delta - cS)$ have the weak asymptotic expansions as in Theorem~\ref{thm:phg_Grushin} and $a^{r,l}_\pm$ are their principal coefficients, belonging to certain Hilbert spaces on the singular set $\cZ$. This description of the adjoint allows us to prove the following theorems.

\begin{thm}
\label{thm:sa_extensions_mu_leq_0}
Let $(M,\cZ,g_\alpha)$ be an $\alpha$-Grushin manifold of dimension $n+1$ and $c\in \R$. Suppose that $\mu<0$. Then, for every unitary $U:\C^2 \to \C^2$, one can assign a self-adjoint extension as follows. Let $\{\varphi_k\}_{k\in \N}$ be the ordered family of eigenfunctions of $\Delta_{0,\cZ}$, which is the Laplace-Beltrami operator on the singular set with respect to the metric $g_{0,\cZ}$ from Definition~\ref{def:grushin}. Let also $a^{r,l}_{\pm,k}$ be the coefficients of the decomposition of the principal terms $a^{r,l}_\pm$ in the basis $\{\varphi_k\}_{k\in \N}$ and define
$$
a_{\pm,k} = \begin{pmatrix}
   a^r_{\pm,k}\\
   a^l_{\pm,k}
\end{pmatrix}.
$$
Then, given a unitary operator $U:\C^2 \to \C^2$, one can construct a self-adjoint extension by taking its domain to be a subspace of $D_{\max}(\Delta - cS)$ satisfying
$$
a_{+,k} = Ua_{-,k}.
$$
\end{thm}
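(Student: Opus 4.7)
The plan is to construct the extension using the asymmetry-form method of \cite[Section 3.4]{sa_general}, combined with the explicit description of $D_{\max}$ provided by the $\alpha$-calculus. By Theorem~\ref{thm:phg_Grushin} and the characterization of $D_{\max}$ in Theorem~\ref{thm:max_domain_full}, every $u \in D_{\max}(\Delta-cS)$ admits leading terms $u \sim a^r_+(u)\,x^{\lambda_+}+a^r_-(u)\,x^{\lambda_-}$ as $x \to 0^+$ and analogous expansions with $(-x)^{\lambda_\pm}$ as $x\to 0^-$. For $\mu<0$ the indicial roots $\lambda_\pm = \tfrac{1+\alpha n}{2}\pm\tfrac{i}{2}\sqrt{-\mu}$ are distinct complex conjugates, so no leading-order logarithms appear and $\overline{x^{\lambda_+}}=x^{\lambda_-}$ for $x>0$.

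The first step is to compute the asymmetry form $\Omega(u,v) = \langle(\Delta-cS)u,v\rangle-\langle u,(\Delta-cS)v\rangle$ on $D_{\max}$. Since $cS$ is a real multiplication operator, the scalar-curvature contribution cancels, and integration by parts on $\{|x|>\epsilon\}$ leaves Wronskian boundary integrals
$$
\Omega(u,v)=\lim_{\epsilon \to 0}\int_\cZ\big[W(u,\bar v)|_{x=-\epsilon}-W(u,\bar v)|_{x=\epsilon}\big]\epsilon^{-\alpha n}\,dy,\qquad W(u,\bar v)=u_x\bar v - u\bar v_x,
$$
the factor $\epsilon^{-\alpha n}$ coming from the induced slice-volume. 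Substituting the asymptotics, the diagonal Wronskian terms cancel, while the cross-terms reduce to the single coefficient $W(x^{\lambda_+},x^{\lambda_-})=(\lambda_+-\lambda_-)x^{\alpha n}=i\sqrt{-\mu}\,x^{\alpha n}$, precisely cancelling the $\epsilon^{-\alpha n}$ factor. Expanding in the eigenbasis $\{\varphi_k\}$ of $\Delta_{0,\cZ}$ I expect
$$
\Omega(u,v)=-i\sqrt{-\mu}\sum_k\Big[\langle a_{+,k}(u),a_{+,k}(v)\rangle_{\C^2}-\langle a_{-,k}(u),a_{-,k}(v)\rangle_{\C^2}\Big],
$$
where the sum is interpreted via the natural dual pairing on the Sobolev-regularity class in which the principal coefficients live.

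Now define $D_U := \{u\in D_{\max} : a_{+,k}(u)=U a_{-,k}(u)\ \text{for every } k\}$. The inclusion $D_{\min}\subset D_U$ is immediate, since all principal coefficients vanish on $D_{\min}$, a fact provided by the $\alpha$-calculus description of $D_{\min}$ earlier in the paper. Symmetry of $(\Delta-cS)|_{D_U}$ then follows by substituting $a_{+,k}=U a_{-,k}$ into the formula for $\Omega$ and using $\langle U\xi, U\eta\rangle_{\C^2}=\langle\xi,\eta\rangle_{\C^2}$ to make every summand vanish. The main step is maximality: if $w\in D_{\max}$ satisfies $\Omega(w,v)=0$ for every $v\in D_U$, then $w\in D_U$. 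Writing $a_{-,k}(v)=\eta_k$ and $a_{+,k}(v)=U\eta_k$, the vanishing of $\Omega(w,v)$ translates to $\sum_k\langle U^*a_{+,k}(w)-a_{-,k}(w),\eta_k\rangle_{\C^2}=0$, and letting $(\eta_k)$ range over admissible sequences forces $a_{+,k}(w)=U a_{-,k}(w)$.

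The main obstacle is precisely this surjectivity statement: one must show that the boundary data $(\eta_k)$ of elements $v\in D_U$ span a dense subset of the relevant Hilbert space of $\C^2$-valued sequences on $\cZ$. This should follow from the left-parametrix construction of the $\alpha$-calculus, which allows one to prescribe, mode by mode, the leading coefficients of local solutions of $(\Delta-cS)u=0$; these can then be glued into global elements of $D_U$ by combining a cutoff near $\cZ$ with a $D_{\min}$-correction, in the spirit of the density argument underlying Theorem~\ref{thm:max_domain_full}.
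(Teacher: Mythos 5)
Your proposal is correct and follows essentially the same route as the paper: the asymmetry (boundary-pairing) form computed via Green's identity, the cancellation of the $\varepsilon^{-\alpha n}$ slice-volume factor against the Wronskian power $x^{\alpha n}$, the harmonic decomposition on $\cZ$, and maximality of the graph of $U$ tested against elements with prescribed single-harmonic smooth leading data. The "surjectivity" obstacle you flag is resolved exactly as you suggest — the paper uses the Borel-type Lemma~\ref{lem:borellemma} to produce polyhomogeneous elements of $D_{\max}$ with an arbitrary smooth (e.g.\ single-eigenfunction) leading coefficient, reducing maximality to finite-dimensional symplectic linear algebra in $\C^4$.
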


\begin{thm}
\label{thm:sa_extensions_mu_geq_0}
Let $(M,\cZ,g_\alpha)$ be an $\alpha$-Grushin manifold of dimension $n+1$ and $c\in \R$. Suppose that 
\begin{enumerate}
    \item $\alpha \geq 0$;
    \item $\mu \in (0,4)$ and $\mu \neq 2$;
    \item function $h := \left. \sqrt{\mu}+(\dive_{|x|^{\alpha n}\omega}(\p_x)\lambda_- - c\p_x(x^2 S))/p(\lambda_-)\right|_\cZ$ is sign-definite.
\end{enumerate}
Then, for every unitary operator $U:\C^2 \to \C^2$, one can assign a self-adjoint extension as follows. Let $\{\hat\varphi_k\}_{k\in \N}$ be the ordered family of eigenfunctions of $\hat\Delta_{0,\cZ}$, which is a Laplace-Beltrami operator on the singular set with respect to the metric $h^2 g_{0,\cZ}$. Let also $a^{r,l}_{\pm,k}$ be the coefficients of the decomposition of the principal terms $a^{r,l}_\pm$ in the basis $\{\hat\varphi_k\}_{k\in \N}$ and define
$$
A_{1,k} = \begin{pmatrix}
   a^r_{+,k}+ia^r_{-,k}\\
   a^l_{+,k}+ia^l_{-,k}
\end{pmatrix},
\qquad
A_{2,k} = \begin{pmatrix}
   a^r_{+,k}-ia^r_{-,k}\\
   a^l_{+,k}-ia^l_{-,k}
\end{pmatrix}
$$
Then, given a unitary operator $U:\C^2 \to \C^2$, one can construct a self-adjoint extension by taking its domain to be a subspace of $D_{\max}(\Delta - cS)$ for which
$$
A_{2,k} = UA_{1,k}.
$$
\end{thm}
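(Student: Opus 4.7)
The argument follows the same strategy as the proof of Theorem~\ref{thm:sa_extensions_mu_leq_0}, with two essential differences: the boundary form here arises from a pair of distinct \emph{real} indicial roots rather than a complex conjugate pair, and its symplectic structure must first be brought to standard Hermitian form by a Cayley-type transformation before a unitary parametrization becomes available. The starting point is the asymmetry form $\Omega(u,v) = \langle (\Delta-cS)u,v\rangle - \langle u,(\Delta-cS)v\rangle$ on $D_{\max}(\Delta-cS)$, which vanishes on $D_{\min}(\Delta-cS)$; by the classical von Neumann theorem, its Lagrangian subspaces are in bijection with self-adjoint extensions. Using Theorem~\ref{thm:phg_Grushin} and the ensuing characterization of $D_{\max}$ established in Theorem~\ref{thm:max_domain_full}, any representative of a class in $D_{\max}/D_{\min}$ takes the canonical form $u^r_+ + u^r_- + u^l_+ + u^l_-$ determined by principal coefficients $a^{r,l}_\pm$ in appropriate Sobolev-type spaces on $\cZ$. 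Green's identity applied on $\{|x|>\e\}$ then reduces $\Omega(u,v)$, as $\e\to 0^+$, to a boundary integral on $\cZ$.

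The heart of the argument is the explicit evaluation of this boundary integral. Because $\sqrt{\mu}\in(0,2)$ and $\alpha\geq 0$, the leading modes $a^{r,l}_\pm x^{\lambda_\pm}$ are not the only contributors: the near-leading terms $x^{\lambda_\pm+1}$ in the polyhomogeneous expansion of $u$ survive the limit, and their coefficients are determined from the leading data through a transport equation arising from the expansion
\[
x^2(\Delta-cS) \;=\; I\bigl(x^2(\Delta-cS)\bigr) + x\,\bigl[(\dive_{|x|^{\alpha n}\omega}(\p_x))\,x\p_x - c\,\p_x(x^2 S)\bigr]\big|_\cZ + O(x^2).
\]
Combining the leading contribution $\sqrt{\mu}(a^\bullet_+\bar b^\bullet_- - a^\bullet_-\bar b^\bullet_+)$ with the subleading corrections produced by this transport equation collapses everything into the single scalar density $h\,d\mu_{g_{0,\cZ}}$ on $\cZ$; the non-resonance $\mu\neq 2$ rules out an algebraic cancellation at the critical order that would obscure this structure. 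One obtains, up to a nonzero real constant,
\[
\Omega(u,v) \;=\; \sum_{\bullet\in\{r,l\}} \int_\cZ h\,\bigl(a^\bullet_+\bar b^\bullet_- - a^\bullet_-\bar b^\bullet_+\bigr)\,d\mu_{g_{0,\cZ}}.
\]

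With $h$ sign-definite, the eigenfunctions $\{\hat\varphi_k\}$ of $\hat\Delta_{0,\cZ}$ form an orthonormal basis of the weighted $L^2$-space in which $\Omega$ decomposes mode by mode. Performing the Cayley substitution $A_{1,k}=(a^r_{+,k}+ia^r_{-,k},\,a^l_{+,k}+ia^l_{-,k})$ and $A_{2,k}=(a^r_{+,k}-ia^r_{-,k},\,a^l_{+,k}-ia^l_{-,k})$ converts the algebraic identity $a_+\bar b_- - a_-\bar b_+ = \tfrac{i}{2}(A_1^u\bar A_1^v - A_2^u\bar A_2^v)$ into a presentation of $\Omega$ on each mode as a standard Hermitian form of signature $(2,2)$ proportional to $\lVert A_{1,k}\rVert^2 - \lVert A_{2,k}\rVert^2$. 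Its maximal isotropic subspaces are precisely the graphs $\{A_{2,k}=UA_{1,k}\}$ of unitaries $U:\C^2\to\C^2$, and fixing a single such $U$ across all $k$ defines a Lagrangian subspace of $\Omega$, which by the von Neumann correspondence is the domain of a self-adjoint extension of $\Delta-cS$.

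The main difficulty is the subleading bookkeeping: one must verify that all contributions at the critical order reassemble into the single scalar density $h$ rather than into a more complicated sub-principal operator on $\cZ$. The hypotheses $\alpha\geq 0$ (ensuring that $1\in\Theta$ so that the near-leading modes $x^{\lambda_\pm+1}$ genuinely appear in the polyhomogeneous expansion) and $\mu\in(0,4)\setminus\{2\}$ (which avoids both the loss of $L^2$-admissibility of $x^{\lambda_-}$ and the resonant cancellations that would spoil the clean form of $h$) are precisely what allows this reduction to produce the conformal factor $h$ of the statement, and hence the claimed parametrization of self-adjoint extensions by $U\in U(\C^2)$.
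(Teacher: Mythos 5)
Your proposal is correct and follows essentially the same route as the paper: Green's identity on $\{|x|>\e\}$, tracking the subleading $x^{\lambda_-+1}$ coefficient via the transport/recursion relation so that the boundary pairing collapses to the single conformal density $h\,\tilde\sigma$ on $\cZ$, Fourier decomposition in the eigenbasis of the Laplacian for $h^2 g_{0,\cZ}$, and the Cayley substitution exhibiting each mode as a signature-$(2,2)$ Hermitian form whose maximal isotropic subspaces are graphs of unitaries. The only point you gloss over, which the paper treats explicitly, is that the principal coefficients may lie only in negative-order Sobolev spaces, so isotropy and maximality of $\Lambda_U$ must be verified by a density argument, pairing against elements whose principal term is a single smooth harmonic.
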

$ $\linebreak

Let us comment on the assumptions of the last theorem. The strategy in proving both theorems will be to use a limiting version of Green's identity at the singular set \eqref{eq:form_mu0_L2} (sometimes referred to as the \emph{boundary pairing formula}) by exploiting the weak asymptotic expansion of the functions $u^{r,l}_{\pm}$. However, when $\mu>0$, the two roots $\lambda_\pm$ are real and different. Hence we need expansion of $u^{r,l}_-$ up to terms of order $o(x^{\lambda_+})$. The first condition guarantees that all of such terms are multiples of the first term, which is used in the derivation of the pairing formula. The next assumption simply removes log terms for simplicity. Finally, the last assumption guarantees that the measure in this pairing formula comes from a Riemannian metric on the singular set $\cZ$, which we use to construct our Fourier basis. The sub-leading coefficient in the expansion of $u^{r,l}_-$ is given by $\sqrt{\mu} - h a^{r,l}_-$.

In the case of Grushin cylinder, when $\alpha \in (0,1)$, $c=0$, $n=1$, we should be able to recover the results of~\cite{pozzoli2}. And indeed, in this case, there is no scalar curvature term, and $\p_x$ is divergent-free. Hence $h = \sqrt{\mu}>0$. Even though $\mu = 1$, for this concrete example, due to the particularity of the problem, there are no log terms. All self-adjoint extensions that the authors of~\cite{pozzoli2} find correspond to gluing individually $k$-th harmonics, as we do in our work. The difference is that they use the Krein-Vi\v{s}ik-Birman theory, which requires the operator of interest to be bounded from below. In our work, we do not make this assumption.

Next, we list analogues of the self-adjoint extensions from~\cite[Theorem 1.3]{pozzoli2} and their relation to the unitary operator $U$ from the Theorem~\ref{thm:sa_extensions_mu_geq_0} : 

\begin{enumerate}
\item The analogue of the Friedrichs extension:
$$
U = \id_2 ,\qquad a^r_- = a^l_- = 0;
$$
\item For $\gamma\in \R$:
$$
U = \begin{pmatrix}
\frac{\gamma - i}{\gamma + i} & 0\\
0 & 1
\end{pmatrix},
\qquad 
\left\{\begin{array}{l}
a^l_- = 0,\\
a^r_+ = \gamma a^r_-;
\end{array}\right.
$$
\item For $\gamma\in \R$:
$$
U = \begin{pmatrix}
1 & 0\\
0 & \frac{\gamma - i}{\gamma + i}
\end{pmatrix},
\qquad 
\left\{\begin{array}{l}
a^l_+ = \gamma a^l_-,\\
a^r_- = 0;
\end{array}\right.
$$
\item For $\gamma\in \R$ and $b\in \C$:
$$
U = \frac{1}{1+|b|^2 - i\gamma }\begin{pmatrix}
1-|b|^2 - i \gamma & -2b\\
-2\overline{b} & -1+|b|^2-i \gamma
\end{pmatrix},
\qquad 
\left\{\begin{array}{l}
a^r_- = b a^l_-,\\
a^l_+ + \overline{b}a^r_+  = \gamma a^l_-;
\end{array}\right.
$$
\item If $\Gamma$ is a Hermitian $2 \times 2$ matrix, then the last extension is given by
$$
U = \frac{\Gamma-i\id_2}{\Gamma+i\id_2}, \qquad 
\begin{pmatrix}
a^r_+\\
a^l_+
\end{pmatrix}
=
\Gamma \begin{pmatrix}
a^r_-\\
a^l_-
\end{pmatrix}.
$$
\end{enumerate}

Even though the authors exclude the previously studied case $\alpha =0$, where there is no singularity, the present self-adjoint extensions are valid in this case as well. In particular, the first listed self-adjoint extension ($U = \id_2$) represents Dirichlet boundary conditions at $\cZ$, while the fifth case with $\Gamma = 0$ ($U = -\id_2$) are the von Neumman conditions at $\cZ$.

In general, it is not possible to reformulate the obtained self-adjointness criteria as boundary value problems, because one has to recover principal terms of both $u^{r,l}_\pm$ which can be separated by several terms of the asymptotic expansion. However, there is a simple case in the previous theorem, namely, when $U = \id$. This would force $a^r_- = a^l_- = 0$. Since we know that those are the principal terms in the asymptotic expansion of $u\in D_{\max}(\Delta - cS)$, we obtain this particular self-adjoint extension as a boundary condition of the form
$$
\lim_{x\to 0}\frac{1}{|x|^{\lambda_-}}u(x,y) = 0,
$$
which corresponds to the Friedrichs extension. This way, we extend the results from~\cite{pozzoli2} to a much more general class of manifolds.

\subsection{The structure of the paper}

In Section~\ref{sec:alpha_results} we discuss the main results of the $\alpha$-calculus that we need for proving Theorems anounced in the previous subsection. We postpone all the proofs until Section~\ref{sec:proofs}. In Section~\ref{sec:natural_domains} we determine the closure and the adjoint of the operator $\Delta-cS$. Theorem~\ref{thm:closure} gives a complete characterization of the closure for $\mu \neq 4$ and a partial characterization for $\mu = 4$. Theorem~\ref{thm:max_domain_full} gives a characterization of the quotient $D_{\max}(\Delta - cS)/D_{\min}(\Delta - cS)$ for all possible values of the discriminant $\mu$. Both of those results are used to prove Theorems~\ref{thm:sa_extensions_mu_leq_0} and~\ref{thm:sa_extensions_mu_geq_0} concerning the self-adjointness criteria in section~\ref{sec:sa_extensions}. We recall briefly the asymmetry form method and prove the two theorems. 

A big part of the paper is Section~\ref{sec:proofs}, where we develop the $\alpha$-calculus, construct the necessary double and triple spaces, prove composition theorems and discuss the parametrix construction. This calculus is very similar to the $0$-calculus and its generalization, the edge calculus~\cite{mazzeo}. For this reason, in some cases we only sketch the proofs to emphasize the differences, while the remaining parts of the arguments are proved word-by-word like in the $0$-calculus.

Finally, in Appendix A we compute the scalar curvature for flat models as well as asymptotics of the scalar curvature for general $\alpha$-Grushin manifolds.

\section{Principal results of the $\alpha$-calculus}

\label{sec:alpha_results}

In this section we discuss concepts related to the construction of the $\alpha$-calculus. Let $M$ be an $\alpha$-Grushin manifold. We cut this manifold along the singular set $\cZ$ and double the boundary to obtain a new manifold with the boundary $\cZ \cup \cZ$. Abusing the notation we call this manifold again $M$. 

We define the {\bf $\alpha$-vector fields} as follows: if $M$ is endowed with a metric as in \eqref{eq:metric_grushin}, we say
\[ \cV_\alpha(M) = \{ W\in C^0(M; TM) \, | \, W\in C^\infty(M^\circ; TM^\circ), \; |W|_{g_\alpha}^2 = \cO(x^2) \} \]
i.e., they are continuous vector fields on $M$, which are smooth in the interior, vanish at the boundary, and those which are $g_\alpha$-orthogonal to the boundary unit normal $N$ vanish to order $(1+\alpha)$, (our assumption that $\p M=\cZ\cup \cZ$ is co-orientable is equivalent to the boundary unit normal being trivialized). The $\alpha$-calculus is a tool to study operators in the universal enveloping algebra of $\cV_{\alpha}(M)$, which in local coordinates near $\partial M$ can be expressed as 
\begin{equation}
\label{eq:operators_of_interest}
    \sum_{j,\beta} a_{j,\beta}(x,y) (x\p_x)^j (x^{1+\alpha}\p_y)^\beta,
\end{equation}
where $(x,y)$ are such that $x$ is a boundary defining function for $\partial M$ (i.e. that $\partial M=\{x=0\}$ and $dx|_{\partial M}\neq 0$) and $(y_1,\ldots, y_n)$ are coordinates on $\partial M$. From this form of the operator, we see that a local frame for the Lie algebra $\cV_\alpha(M)$ is given by
\[ \cV_\alpha(M) = \text{span}_{C^\infty(M^\circ)}\{x\partial_x,\, x^{1+\alpha}\partial_{y_j}\}. \]
In general the ``nice" operators in our calculus will be elliptic combinations of these vector fields, which have some control at infinity.

The animating idea between various geometric pseudodifferential calculi is that one should consider the kernels of differential operators not as distributions living on $M^2$, but as certain distributions living on its desingularization $M_\alpha^2$ called the double stretched product. In order to pass from $M^2$ to $M^2_\alpha$ we need to perform a blow-up of a certain submanifold. Note that $M^2$ is a manifold with corners and hence the correct notion of blow-up is given by quasi-homogeneous blow-up of $p$-submanifolds. This blow-up procedure was thoroughly studied in~\cite{thesis}, where the orders were assumed to be integer. We define the quasi-homogeneous blow-up for non-integer orders at the cost of the blow-down map being not smooth (while remaining manageable).

As $\cV_{\alpha}$ is a finitely generated projective module over $C^\infty(M),$ we can use the 
Serre-Swan theorem, or proceed directly as in \cite[\S8.2]{melrose1993atiyah}, 
and find that there is a vector bundle
\begin{equation}\label{alpha-tgtbdl}
	{}^{\alpha}TM \to M,
\end{equation}
together with a bundle map $j_\alpha:{}^{\alpha}TM \to TM$ such that 
$(j_\alpha)_*C^\infty(M;{}^{\alpha}TM) = \cV_{\alpha} \subseteq C^0(M;TM).$
Eliding the map $j_\alpha$, we say that the space of sections of ${}^{\alpha}TM$ is $\cV_{\alpha}.$
We will refer to ${}^{\alpha}TM$ as the {\it $\alpha$-tangent bundle}. One connection between these last two geometric constructions is that ${}^\alpha TM$ arises now as the normal bundle to the diagonal in the desingularized $M_\alpha^2$.

We discuss the blow-up construction in Subsection~\ref{subsec:quasi_blow-up}. The double-stretched product $M_\alpha^2$ is defined in Subsection~\ref{subsec:double_prod}. Finally, the polyhomogeneous distributions are discussed in Subsection~\ref{subsec:polyhomog_as} and the parametrix construction is given in Subsection~\ref{subsec:parametrix_statement}. The parametrix construction lies at the center of our proofs. It is adapted for studying elliptic operators of the form~\eqref{eq:operators_of_interest},
where due to the singularity at $x=0$, ellipticity is not sufficient for having a good parametrix. Most of the proofs from this section are postponed till Section~\ref{sec:proofs}.

\subsection{Manifolds with corners and quasi-homogeneous blow-up}

\label{subsec:quasi_blow-up}

In this subsection we review the notion of quasi-homogeneous blow-ups. We illustrate the definitions via some simple examples to underline the main differences with homogeneous blow-ups. However, the construction conceptually is the same. For a deeper look see the thesis~\cite{thesis}, as well the survey \cite{grieser2001basics} for a more general introduction to blow-ups in microlocal analysis. As we will see later, in contrast to~\cite{thesis} in this work we allow for non-integer blow-up orders, but the difference in the proofs is minimal.

The natural differential geometric structure for many geometric pseudodifferential operator calculi is given by smooth manifolds with corners. These are second-countable Hausdorff topological spaces modeled by the standard Euclidean corners $\R^n_l = \R^{n-l} \times [0,+\infty)^l$. If $M$ is a manifold with corners, then for every point $q\in M$ we can define the inward-pointing tangent cone $T^+_q M$ in a natural way.

\begin{definition}
Let $M$ be a manifold with corners and $\cB\subset \p M$ a boundary hypersurface. A non-negative function $r: M\to \R$ is called a \textit{boundary defining function} of $\cB$ if
\begin{enumerate}
\item $r(q) = 0$ if and only if $q\in \cB$;
\item $dr(v) \neq 0$, for all $v\in T_+ M$ transversal to $\cB$. 
\end{enumerate}
\end{definition}

Assume first that we would like to blow-up $\R^n_l$ at zero with different rates in different directions. To do so, we need to define the blow-up data which consists of a filtration
$$
\{0\} = V_0 \subset V_1 \subset \dots \subset V_k = \R^n_l, 
$$
such that each $V_i$ is a subcorner, and a vector $\Lambda=(\lambda_1,\dots,\lambda_k)$ such that $0< \lambda_1 < \dots < \lambda_k$. Roughly speaking we would like to stretch $\R^n_l$ with order $\lambda_i$ in the direction $V_i/V_{i-1}$. However, unlike the standard homogeneous blow-up we can only choose filtrations that satisfy certain conditions.

\begin{definition}
\label{def:p-filtration}
Let $I \subset \{1,\dots,n\}$. Let
$$
l_I = \{(x_1,\dots,x_l)\in \R^0_l :x_i = 0 \text{ if } i\in I\}.
$$
A \textit{$p$-filtration} of step $k$ is a filtration
$$
\{0\} = V_0 \subset V_1 \subset \dots \subset V_k = \R^n_l, 
$$
such that $V_i \simeq l_{I_i} \oplus W_i$, where $W_i \subset \R^n$ is an arbitrary subspace and 
$$
\dim W_i -\# I_i <  \dim W_{i+1}-\# I_{i+1}.
$$
\end{definition}

Let us comment on this definition. If there were no corners, we could simply take $V_i$ to be arbitrary spaces because the quotients $V_i/V_{i-1}$ could be identified with subspaces complementing $V_{i-1}$ in $V_i$. In the presence of corners, the quotients $V_i/V_{i-1}$ can be identified with corners that complement $V_{i-1}$ in $V_i$ only if they satisfy the Definition~\ref{def:p-filtration}. Figure~\ref{fig:p-filtrations} illustrates some simple examples.

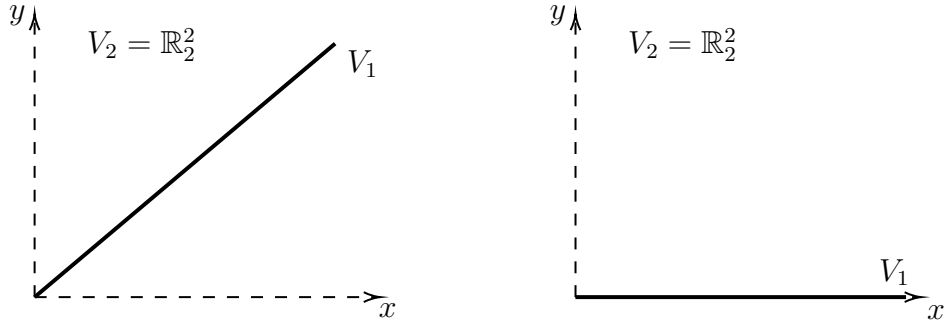
\begin{figure}
    \centering
    \tikzset{every picture/.style={line width=0.75pt}} 

\begin{tikzpicture}[x=0.75pt,y=0.75pt,yscale=-0.75,xscale=0.75]

\draw [line width=0.75]  [dash pattern={on 4.5pt off 4.5pt}]  (40,220) -- (40,32) ;
\draw [shift={(40,30)}, rotate = 90] [color={rgb, 255:red, 0; green, 0; blue, 0 }  ][line width=0.75]    (10.93,-3.29) .. controls (6.95,-1.4) and (3.31,-0.3) .. (0,0) .. controls (3.31,0.3) and (6.95,1.4) .. (10.93,3.29)   ;
\draw [line width=0.75]  [dash pattern={on 4.5pt off 4.5pt}]  (40,220) -- (268,220) ;
\draw [shift={(270,220)}, rotate = 180] [color={rgb, 255:red, 0; green, 0; blue, 0 }  ][line width=0.75]    (10.93,-3.29) .. controls (6.95,-1.4) and (3.31,-0.3) .. (0,0) .. controls (3.31,0.3) and (6.95,1.4) .. (10.93,3.29)   ;
\draw [line width=1.5]    (240,50) -- (40,220) ;
\draw [line width=0.75]  [dash pattern={on 4.5pt off 4.5pt}]  (400,220) -- (400,32) ;
\draw [shift={(400,30)}, rotate = 90] [color={rgb, 255:red, 0; green, 0; blue, 0 }  ][line width=0.75]    (10.93,-3.29) .. controls (6.95,-1.4) and (3.31,-0.3) .. (0,0) .. controls (3.31,0.3) and (6.95,1.4) .. (10.93,3.29)   ;
\draw [line width=0.75]  [dash pattern={on 4.5pt off 4.5pt}]  (400,220) -- (628,220) ;
\draw [shift={(630,220)}, rotate = 180] [color={rgb, 255:red, 0; green, 0; blue, 0 }  ][line width=0.75]    (10.93,-3.29) .. controls (6.95,-1.4) and (3.31,-0.3) .. (0,0) .. controls (3.31,0.3) and (6.95,1.4) .. (10.93,3.29)   ;
\draw [line width=1.5]    (620,220) -- (400,220) ;

\draw (246,52.4) node [anchor=north west][inner sep=0.75pt]    {$V_{1}$};
\draw (73,37.4) node [anchor=north west][inner sep=0.75pt]    {$V_{2} =\mathbb{R}_{2}^{2}$};
\draw (600,192.4) node [anchor=north west][inner sep=0.75pt]    {$V_{1}$};
\draw (433,37.4) node [anchor=north west][inner sep=0.75pt]    {$V_{2} =\mathbb{R}_{2}^{2}$};
\draw (267,222.4) node [anchor=north west][inner sep=0.75pt]    {$x$};
\draw (632,223.4) node [anchor=north west][inner sep=0.75pt]    {$x$};
\draw (381,22.4) node [anchor=north west][inner sep=0.75pt]    {$y$};
\draw (21,22.4) node [anchor=north west][inner sep=0.75pt]    {$y$};

\end{tikzpicture}
    \caption{Example of a $p$-filtration $V_1 \subset V_2 = \R^2_2$ (right) and a non-example (left)}
    \label{fig:p-filtrations}
\end{figure}

Let us first blow-up the origin in $\R^n_l$. The blow-up data $(\cF,\Lambda)$ contains a $p$-filtration via subcorners $V_i$ and a positive vector $\Lambda=(\lambda_1,\dots,\lambda_k)$ such that $\lambda_{i+1} > \lambda_i$. Choose subcorners $W_i$ which complement $V_{i-1}$ in $V_i$. If $x_i \in W_i$, we define the $\R^+$-action on $W_i$ as 
\begin{equation}
\label{eq:dilations}
\delta_t(x_i) = t^{\lambda_i} x_i
\end{equation}
and extend the action to the whole $\R^n_l$ by linearity.

This gives a smooth $\R^+$-action on $\R^n_l\setminus\{0\}$ and an equivalence relation, Namely $q\sim q'$ if and only if $\delta_t q = q'$ for some $t>0$. 
\begin{definition}
Let $(\cF,\Lambda)$ be the blow-up data, where $\cF$ is a $p$-filtration of step $k$ and $\Lambda=(\lambda_1,\dots,\lambda_k)$ with $\lambda_{i+1}>\lambda_i$, and $\delta_t$ as defined in the equation~\eqref{eq:dilations}. The blow-up of $\R^n_l$ at $0$ with the blow-up data $(\cF,\Lambda)$ as the set is given by
$$
[\R^n_0;0]_{\Lambda}= \left( (\R^n_l \setminus \{0\}) / \delta_t\right) \sqcup \R^n_l \setminus \{0\}.
$$

The blow-down map
$$
\beta: [\R^n_l;0]_{\Lambda} \to \R^n_l
$$
acts as the identity on $\R^n \setminus \{0\}$ and maps the whole $\left( (\R^n \setminus \{0\}) / \delta_t\right)$ to the origin. 
\end{definition}

\begin{prop}
The following statements hold:
\begin{enumerate}
\item $[\R^n_0;0]_{\Lambda}$ is a smooth manifold with corners;
\item The construction does not depend on the choice of complements, i.e. two blow-ups using two different complements are diffeomorphic;
\item Blow-down map $\beta$ is smooth if $\Lambda$ has integer values and continuous otherwise;
\item $\beta^{-1}$ is a diffeomorphism when restricted to $\R^n \setminus \{0\}$.
\end{enumerate}
\end{prop}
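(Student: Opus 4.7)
The plan is to construct an explicit atlas of ``affine projective'' charts on $[\R^n_l;0]_{\Lambda}$ modelled on the standard blow-up, and then to read off (1)--(4) from the form of the charts and of $\beta$. For each index $i_0$, let $U_{i_0}^\pm\subset \R^n_l\setminus\{0\}$ be the open sector on which $|x_{i_0}|^{1/\lambda_{i_0}}$ strictly dominates $|x_j|^{1/\lambda_j}$ for $j\neq i_0$ (with the $\pm$ indicating the sign of $x_{i_0}$ when that coordinate is not a boundary coordinate of $\R^n_l$). On $U_{i_0}^\pm$ define new coordinates
\[ t := |x_{i_0}|^{1/\lambda_{i_0}}, \qquad \eta_j := x_j/t^{\lambda_j} \quad (j\neq i_0). \]
These are smooth on $U_{i_0}^\pm$, bounded in magnitude by the sector condition, and extend smoothly across $\{t=0\}$; together the charts $\{U_{i_0}^\pm\}$ cover a neighbourhood of the front face $\beta^{-1}(0)$, while away from it they coincide with open subsets of $\R^n_l\setminus\{0\}$.

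To establish (1) I would check compatibility of the charts on overlaps. On $U_{i_0}^\pm\cap U_{i_1}^\pm$ the sector condition forces $|\eta_{i_1}|$ to be bounded away from $0$, so the change-of-variable formula $t_{(i_1)} = t_{(i_0)}|\eta_{i_1}|^{1/\lambda_{i_1}}$ together with the expressions for the new $\eta^{(i_1)}_j$ in terms of the old $\eta^{(i_0)}$'s involve only fractional powers of positive quantities bounded away from zero, hence are smooth even when the weights are non-integer. The resulting space is an $n$-dimensional manifold with corners, whose boundary structure is inherited from $\R^n_l$ together with the new front face $\{t=0\}$. For (2), a change of complements $W_i\leadsto W_i'$ replaces the coordinate functionals used to define the $\eta_j$ by invertible linear combinations of themselves, which induces a smooth diffeomorphism between the two resulting atlases.

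For (3), in the chart $U_{i_0}^\pm$ the blow-down map reads
\[ \beta(t,\eta) = \bigl(t^{\lambda_1}\eta_1,\ldots,\pm t^{\lambda_{i_0}},\ldots,t^{\lambda_n}\eta_n\bigr), \]
so each component is a monomial in $t$ with exponent $\lambda_j$, and this is smooth if and only if every $\lambda_j$ is a non-negative integer. For (4), away from the front face one has $t>0$ in every chart, and the formulas above exhibit $\beta$ as a smooth diffeomorphism there; its inverse coincides with the tautological inclusion $\R^n_l\setminus\{0\}\hookrightarrow [\R^n_l;0]_\Lambda$ built into the very definition of the blow-up.

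The main obstacle is the smooth compatibility of the affine charts across the front face when some weight $\lambda_i$ is not an integer: a naive projective change of coordinates between charts involves fractional powers of $\eta$-coordinates that fail to be smooth at $\eta=0$. The device that circumvents this is the choice of the sectors $U_{i_0}^\pm$, which guarantees that every fractional power encountered in a transition formula is applied to a quantity bounded away from zero, where it is real-analytic. The remaining verification, that the $p$-filtration condition of Definition~\ref{def:p-filtration} ensures the sectors assemble compatibly with the corner structure of $\R^n_l$ so that no spurious corner directions are introduced by the blow-up, is a direct combinatorial check amounting to tracking, for each subcorner $V_i$, which standard boundary faces of $\R^n_l$ it is tangent to.
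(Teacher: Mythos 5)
For items (1), (3) and (4) your argument is essentially the paper's: the authors also prove these by exhibiting the projective charts \eqref{eq:projective_coordinates} (together with the polar ones) and reading off the (non-)smoothness of $\beta$ from the monomials $t^{\lambda_j}$; your sector device, which keeps every fractional power applied to a quantity bounded away from zero, is the standard way to make the overlap computation honest, and you carry it out in more detail than the paper does. One repair is needed, though: if ``$|x_{i_0}|^{1/\lambda_{i_0}}$ strictly dominates $|x_j|^{1/\lambda_j}$ for all $j\neq i_0$'' is read literally, the sectors $U_{i_0}^{\pm}$ are pairwise disjoint (membership in $U_{i_0}$ and in $U_{i_1}$ would impose two contradictory strict inequalities), so the overlaps you analyse are empty and the sectors fail to cover the locus where the maximum of the $|x_j|^{1/\lambda_j}$ is attained at two or more indices. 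Replace the condition by domination up to a fixed constant, say $|x_{i_0}|^{1/\lambda_{i_0}}>\tfrac12\max_j|x_j|^{1/\lambda_j}$; then the sectors cover $\R^n_l\setminus\{0\}$, the $\eta_j$ remain bounded, $|\eta_{i_1}|$ is bounded below on overlaps, and your transition formulas go through unchanged.

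Item (2) is where your argument has a genuine gap. A change of complement $W_i\leadsto W_i'$ is the graph of a linear map $W_i\to V_{i-1}$, so the adapted coordinates change by adding to a given coordinate a linear combination of coordinates of \emph{different} weights; this is not an automorphism respecting the weight decomposition. Tracking it through your charts, the transition between the two atlases acquires terms of the form $\eta\,t^{\lambda_i-\lambda_j}$ with $\lambda_i-\lambda_j>0$: for instance with $V_1=\spann\{e_1\}$ of weight $\lambda_1$, $W_2=\spann\{e_2\}$ and $W_2'=\spann\{e_2+ce_1\}$ of weight $\lambda_2>\lambda_1$, one finds in the chart where $x_2$ dominates that $t'=t$ while $\eta_1'=\eta_1\mp c\,t^{\lambda_2-\lambda_1}$. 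This is smooth at $t=0$ precisely when $\lambda_2-\lambda_1\in\N$, so for non-integer weights the tautological identification of the two blow-ups is \emph{not} a diffeomorphism, and ``invertible linear combinations of the coordinate functionals'' do not by themselves induce one. A correct proof of (2) must produce a non-tautological diffeomorphism (for example via the polar description $S(\R^n_l)\times\R_+$, comparing the level sets of the two pseudo-norms); the paper does not reprove this and defers it to~\cite{thesis}.
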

The proofs of 1), 3) and 4) come from explicit constructions of coordinates which we describe next. The proof of 2) can be found in~\cite{thesis}. 

As in the case of homogeneous blow-ups we have two good coordinate systems. One system generalizes the spherical coordinates; the other system is an analog of projective coordinates. Note that we can extend $\R^n_l$ to $\R^n$ by gluing together several of its copies. Let $Q$ be a positive-definite quadratic form on $\R^n$. We can define a pseudo-norm:
$$
\|x\| = \sqrt{\sum_{i=1}^k Q(x_i)^{\frac{1}{a_i}}}.
$$
This pseudo-norm is well defined on $\R^n_l$ as well. Now we can define coordinates on $[\R^n;0]_{(\cF,\Lambda)}$ as follows. We take the level set 
\begin{equation}
    \label{eq:level_set}
    S(\R^n_l)=\{x\in \R^n_l:\|x\|=1\}
\end{equation}
which is a smooth manifold by construction. Then we can describe the blown-up space as
$$
[\R^n_l;0]_{(\cF,\Lambda)} = S(\R^n_l) \times \R^+,
$$
where $\R^+$ variable correspond on $\R^n$ to the value of the pseudo-norm $\|x\|$. Thus we have a trivial fibration over $\R_+$ and thus one can construct charts from the charts of $S(\R^n_l)$. This is essentially a generalization of the polar coordinates. 

Let $\theta$ be a point in $S(\R^n_l)$. If we view $S(\R^n_l)$ as a subset of $\R^n_l$ as in~\eqref{eq:level_set}, then we can define projections $\theta_i$ of $\theta$ to the complements $W_i$. The blow-down map $\beta$ is given by
\begin{equation}
    \label{eq:spherical_coordinates}
    \beta(r,\theta) = (r^{\lambda_1}\theta_1,\dots,r^{\lambda_k}\theta_k).
\end{equation}
To construct the blow-up as a singular change of variables, we can define
$$
(r,\theta)(x) = \left(\|x\|,\frac{x_i}{\|x\|^{\lambda_i}} \right). 
$$

It is often more convenient to use projective coordinates instead. By slight abuse of notation let $x_j$, $j=1,\dots,n$ be coordinate functions adapted to the choice of the complements $W_j \simeq V_j/V_{j-1}$. We can define their order as $\text{ord}(x_j)=\lambda_j$. Assume that we are in a region $x_j \neq 0$ for a fixed coordinate function $x_j$. Then we can define a pair of charts  
\begin{equation}
    \label{eq:projective_coordinates}
    \xi_j = \begin{cases}
\pm |x_i|^\frac{1}{\lambda_i}, & \text{if } j= i,\\
\frac{x_j}{|x_i|^{\lambda_i/\lambda_j}}, & \text{if } j\neq i;
\end{cases}
\end{equation}
where we use two signs $\pm$ if the subset $\{x_k = 0, \forall k\neq j\}$ is a line and only plus if the same subset is a semi-line.

Repeating over all possible $j=1,\dots,n$ will give us a system of coordinate charts. Moreover, they give rise to a smooth manifold structure. It is also straightforward to check that the smooth structure via spherical coordinates and projective coordinates are diffeomorphic.

Similarly, if we wish to blow-up along submanifolds, we need to be careful that the normal bundle is well-defined, since the normal cone can vary from point to point. Thus again we need to restrict to a special class of submanifolds.

\begin{definition}
\label{def:p-manifold}
Given $I \subset \{1,\dots,n\}$, define
$$
L_I = \{x=(x_1,\dots,x_n)\in \R^n_l : x_i = 0, i\in I\}.
$$
A \textit{$p$-submanifold} of a manifold with corners $M$ is a subset $P\subset M$, such that for each $x\in P$ there exists a chart $(U,\varphi)$ around $x$ and an index set $I$ for which
$$
\varphi(P) = \varphi(U) \cap L_I.
$$
Every $p$-submanifold has a well-defined normal bundle, and hence a \textit{$p$-filtration} on a manifold $M$ along a submanifold $P$ is defined as smoothly varying family of $p$-filtrations of the normal bundle. 
\end{definition}

Whenever we have a blow-up data $(\cF,\Lambda)$, where $\cF$ is a $p$-filtration along a $p$-submanifold $P$ of $M$, we can define the blow-up $[M;P]_{(\cF,\Lambda)}$ as the disjoint union of $M\setminus P$ with the spherical normal bundle of $P$. The blow-down map is again a diffeomorphism onto $M\setminus P$ and we can use charts of $M$ to construct some charts of $[M;P]_{(\cF,\Lambda)}$. In order to construct charts close to the spherical bundle, we identify the normal bundle with a tubular neighborhood of $P$ via an exponential map and use either spherical or projective local coordinates. It is necessary to prove that such local coordinates can be glued together to form a system of charts. This is indeed the case as the proof of~\cite[Thm 2.26]{thesis} shows and which holds for non-integer elements as well. The result of this blow-up will be a smooth manifold with corners that has one more boundary hypersurface $\beta^{-1}(P)$ with respect to the original manifold $M$. This hypersurface is usually referred to as the \textit{front face}. Note again that in the case of non-integer weights $\lambda_i$, the blow-down map is not smooth in general, but it is a smooth bijection when restricted to $M\setminus P$.

\subsection{The double stretched product}

\label{subsec:double_prod}

We now construct the double stretched product $M_\alpha^2$, where the kernels of the constructed pseudo-differential calculus live. The double stretched product in the $0$-calculus is just a special case for $\alpha = 0$.

Consider a $\alpha$-Grushin manifold. By cutting along the singular set $\cZ$ we obtain a compact manifold $M$ with boundary $\p M = \cZ \cup \cZ$. $\cZ$ is co-orientable, and hence the normal vector field $X$ at $\cZ$ is well defined. Essentially we want to blow-up along this vector field with order $1$ and along the boundary with order $1+\alpha$. In order to make everything consistent with the definition of blow-up given previously, we consider two separate cases: when $-1<\alpha<0$ and $\alpha\geq 0$. Both cases are very similar, and for this reason we explain the second one in-depth and only sketch the first one.

Assume that $\alpha \geq 0$. Consider the annihilator of a vector field $X$ of the normal to a boundary component. It is given by a collection of $n-1$ one forms. For convenience, we group these 1-forms into a single vector-valued form $\sigma$, which will define $X$ uniquely modulo a scale factor due to the relation $X=\ker \sigma$. Close to the boundary we can always choose coordinates $(x,y) \in \R^n$, such that $X=\p_x$. Then $\sigma =  dy$ locally.

We define the double-stretched product via the following construction. Denote by $\Delta$ the diagonal of $M\times M$. First note that $\p \Delta \subset M \times M$ is a $p$-submanifold. Indeed, in the local coordinates it is given by the set $(0,y,0,y)$. Let $\pi_L,\pi_R:M\times M \to M$ be projections to the left and right factors. We can lift $\sigma$ naturally to a form on $M\times M$ by defining
$$
\sigma_2 = \pi_l^* \sigma - \pi_r^* \sigma.
$$
This is a vector-valued one-form that lives in the annihilator of $\partial\Delta$, which is the dual space to the normal bundle $N(\partial \Delta)$. Thus we can define a subbundle $\ker \sigma_2 \subset N(\partial \Delta) $ and assign to it weight $1$ and the weight $1+\alpha$ to all $N(\partial \Delta)$. The filtration $\cF$
$$
{0}\subset \ker \sigma_2 \subset N(\p \Delta) 
$$
is indeed a $p$-filtration over $\p \Delta$. We assign to this filtration the order vector $\Lambda = (1,1+\alpha)$. This gives the blow-up data $(\cF,\Lambda)$.
\begin{definition}
\label{def:double_product}
$\alpha$-double stretched product is defined as
$$
M^2_\alpha = [M^2;\p\Delta]_{(\cF,\Lambda)}.
$$
\end{definition}

The structure of this resolved space is more clear in various coordinate systems. As previously mentioned, close to the boundary we have coordinates $(x,y)$, where $x=0$ define locally the boundary $\p M$. This gives us coordinates on $M^2$ given by ${(x,y,\tilde{x},\tilde{y})}\in \R^{2(n+1)}$ and in these coordinates $\p\Delta = {(0,y,0,y)}$. Also, we can assume that the normal bundle is given by 
$$
N(\p\Delta) = \spann\{\p_{x},\p_{\tilde x},\p_{y}-\p_{\tilde y}\}.
$$ 
In these coordinates the one form $\sigma_2$ is written as
$$
\sigma_2 = dy-d\tilde{y}.
$$
Hence $\ker \sigma_2 = \spann\{\p_x,\p_{\tilde{x}}\}$.

Thus centering our coordinates at a specific point in $\p \Delta$, we can define local coordinates for $M^2_\alpha$ as $(r,\theta)$, where
\begin{equation}\label{polar-coords}
    r= (x^{2(1+\alpha)} + |y-\tilde{y}|^2 + \tilde x^{2(1+\alpha)})^{\frac{1}{(2(1+\alpha))}} , \quad \theta = \left(\frac{x}{r},\frac{y-\widetilde{y}}{r^{(1+\alpha)n}},\frac{\widetilde{x}}{r}\right) = (\theta_x,\theta_y,\theta_{\widetilde{x}}) .
\end{equation}
and $\theta$ are coordinates on the level-set $r=1$.

Recall that a blow-up produces a new boundary face. Thus $M^2_\alpha$ has three faces. There are two faces coming from $\p M \times M$ and $M \times \p M$ which are called the left and right faces and are denoted $B_{10}$ and $B_{01}$ correspondingly. The third face is the preimage of $\p \Delta$ under the blow-down map which is denoted by $B_{11}$.

\begin{figure}[h]
\includegraphics[scale=.5]{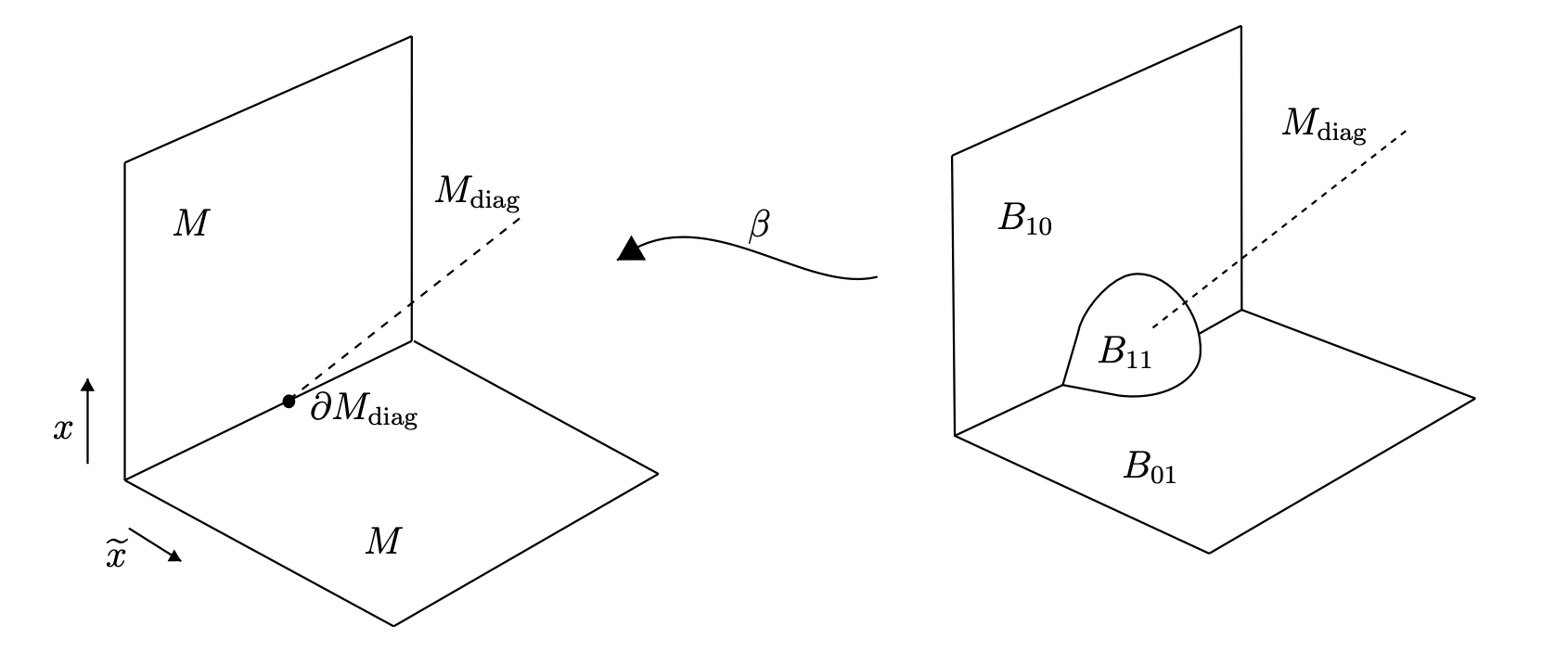}
\caption{The blow-down map $\beta$ of the $\alpha$-stretched product space $M\times_\alpha M$}
\end{figure}

Now assume that $-1<\alpha < 0$. Since $\cZ$ was a co-orientable sub-manifold, for each of the connected components of $\p M$ one can define $T \p M$ as an annihilator of a one-form $\tilde{\sigma}$. Again we lift this form to $M^2$ by taking
$$
\tilde \sigma_2 = \pi_l^* \tilde \sigma - \pi_r^* \tilde \sigma.
$$ 
The new filtration $\tilde \cF$ is given by
$$
0 \subset \ker \tilde\sigma_2 \subset N(\p \Delta)
$$
and the new order vector $\tilde{\Lambda}=(1+\alpha,1)$. It is not difficult to check that we get exactly the same coordinates. The reason for this is that a tubular neighborhood of $\cZ$ allows us to write close to the boundary $M \simeq \cZ \times \R_+$, which already gives a natural splitting of the corresponding normal bundle to the diagonal.

\subsection{Polyhomogeneous asymptotics}

\label{subsec:polyhomog_as}

In the definition of the large $\alpha$-calculus we will use operator kernels that have certain asymptotic expansions in the neighborhoods of the boundary hypersurfaces.

We recall first the definition of conormal functions: Let $\cV_b(X)$ be the space of smooth vector fields tangent to the boundary of $X$. We define the space of conormal functions to be 
\[ \cA^0(W) = \{ u\in C^\infty(W^\circ) : V_1\ldots V_k u\in L^\infty(W),\, V_j\in \cV_b(W), \forall k\in \N \}, \]
and further define
\[ \cA^\ast(X)=\bigcap_{s\in \C, p\in \N} r^s (\log(r))^p \cA^0(X). \]

\begin{definition}
An \textit{index set} $E$ is a discrete subset of $\C \times \N_0$ such that
\begin{equation}\label{eq:indexsetfinitetails}
(s_j,p_j)\in E, \qquad |(s_j,p_j)| \to \infty \Rightarrow \Re(s_j) \to +\infty.    
\end{equation}
An index set is said to be \textit{smooth} if for every $(s,p)\in E$, indices $(s+k,p-l)\in E$ for all $k,l\in \N_0$ with $l\leq p$. 
\end{definition}

The index sets are used to define the space of functions with good asymptotic properties. Assume that $X$ is a manifold with a single boundary component, with boundary defining function $r$, and with associated index set $E$. Given these definitions, the space of \textit{conormal polyhomogeneous distributions with index set $E$} consists of distributions on $X$ which have the following asymptotic expansions. Given $r$ a defining function for $\p X$ we say that given
\begin{equation}\label{eq:simplephg_asymptotics}
  u(r,y) \sim \sum_{(s,p)\in E}r^{s} (\log r)^{p}\, u_{sp}(y),  \quad u_{sp}\in C^\infty(\p X)
\end{equation}
where this asymptotic is interpreted to mean that for each $N\in \mathbb{N}$, 
\begin{equation}\label{eq:asympsumdef}
    u - \sum_{\substack{ (s,p)\in E \\ \Re(s)\leq N}} r^s (\log r)^p \, u_{s,p}(y) \in \dot{C}^N\left([0,1), C^\infty(\p X)\right)  
\end{equation}  

We denote this space of distributions by $\cA^E_{phg}(X)$. If there are more boundary components, then we can impose the previous asymptotics at each boundary component with their own index set.

More generally, assume that manifold $X$ is a manifold with corners and has $k$ codimension one boundary faces $\cB_j$, $j= 1,\dots ,k$. Let $r_k$ be the corresponding defining functions and for each $\cB_j$ assign an index set $E_j$. Denote $\cE = (E_1,\dots,E_k)$ and $\cE(k)$ for the collection of index sets from the boundaries intersecting $\cB_k$. The space of conormal polyhomogeneous distributions $\cA^\cE(X)$ is constructed inductively as a space of distributions which close to the boundary faces $\cB_j$ have expansions

\begin{equation}
    \label{eq:phg_asymptotics}
    u \sim \sum_{(s,p)\in E_k}r_k^{s} (\log r_k)^{p} u_{sp},
\end{equation}
where $u_{sp} \in \cA_{phg}^{\cE(k)}(\cB_k)$. More generally we denote the space of functions of \emph{all possible} polyhomogeneous conormal expansions with respect to some arbitrary smooth index set by $\cA_{phg}^*(X)$.

\begin{remark}
    Note that higher order coefficients $u_{sp}$ are not invariantly defined in general. Nevertheless, the principal term of the expansion is invariant.
\end{remark}

For example, the space of smooth extendable functions across the boundary is a subspace of $\cA^{0}_{phg}(X) $, the spaces of smooth functions which vanish up to order $k$ is a subspace of $\cA^k_{phg}(X)$. Smooth functions which vanish to infinite order form a subspace of conormal polyhomogeneous distributions with the empty index set. But by the analogy with the previous example, we denote this space as $\cA^\infty_{phg}(X)$.

Because the definition of $\cA_{phg}^\cE(X)$ is defined inductively, in order to check that a function $u$ belongs to $\cA_{phg}^{\cE}$ it is often simplest to apply the following criterion, proven in \cite{melrose_book}
\begin{lem}\label{lem:phg-crit}
Assume $u\in \cA_{phg}^{\ast}(X)$, and at every boundary face $\cB_k$ of $X$ there is an expansion of $u$ of the form \eqref{eq:phg_asymptotics}, with coefficients merely belonging to $\cA^\ast(\cB_k)$, but with exponents belonging to $E_k$. Then $u$ in fact belongs to $\cA_{phg}^\cE(X)$.
\end{lem}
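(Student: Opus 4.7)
The plan is to argue by induction on $\dim X$. When $X$ has no corners, the statement reduces to its hypothesis: each $\cB_k$ is itself closed, $\cE(k)$ is empty, and $\cA_{phg}^{\cE(k)}(\cB_k)=C^\infty(\cB_k)=\cA^\ast(\cB_k)$, so there is nothing to prove. For the inductive step, fix a boundary hypersurface $\cB_k$ of $X$ with defining function $r_k$; by hypothesis we have
$$
u\sim\sum_{(s,p)\in E_k}r_k^{s}(\log r_k)^{p}u_{sp},\qquad u_{sp}\in\cA^\ast(\cB_k),
$$
and we must upgrade $u_{sp}$ to lie in $\cA_{phg}^{\cE(k)}(\cB_k)$. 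Since $\dim\cB_k<\dim X$, the inductive hypothesis applied on $\cB_k$ reduces the problem to checking (a) that $u_{sp}\in\cA_{phg}^\ast(\cB_k)$ and (b) that at each boundary face $\cB_k\cap\cB_l$ of $\cB_k$, the coefficient $u_{sp}$ admits an expansion with exponents in $E_l$ and coefficients in $\cA^\ast(\cB_k\cap\cB_l)$.

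Point (a) follows from uniqueness of asymptotic expansions. The ambient assumption $u\in\cA_{phg}^\ast(X)$ already furnishes \emph{some} polyhomogeneous expansion of $u$ at $\cB_k$, say with a discrete index set $F_k$ and coefficients in $\cA_{phg}^\ast(\cB_k)$. Two polyhomogeneous expansions of a single conormal distribution with discrete index sets must agree term-by-term, since matching exponents are determined uniquely by the leading asymptotics at each order. Hence the $E_k$-coefficients $u_{sp}$ coincide with those of the $F_k$-expansion and lie in $\cA_{phg}^\ast(\cB_k)$; any index of $F_k\setminus E_k$ must carry a vanishing coefficient, rendering the two expansions genuinely compatible.

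For point (b), work in local coordinates $(r_k,r_l,y)$ near the corner $\cB_k\cap\cB_l$. The coefficient $u_{sp}(r_l,y)$ is extracted from $u$ by a standard Mellin-type operation in the $r_k$-variable alone: after subtracting all lower-order terms of the $\cB_k$-expansion, one reads off the coefficient of $r_k^{s}(\log r_k)^{p}$ by applying an appropriate combination of $(r_k\partial_{r_k}-s)^{p+1}$-type projectors together with the limit $r_k\to 0^{+}$. Crucially, this extraction is linear and involves only $r_k$, hence commutes with any polyhomogeneous structure in $r_l$. Inserting the hypothesized $\cB_l$-expansion $u\sim\sum_{(s',p')\in E_l}r_l^{s'}(\log r_l)^{p'}v_{s'p'}$ and inducting on $\Re s$, so that previously extracted $u_{s'p'}$ at lower orders already carry $r_l$-expansions with exponents in $E_l$ and $\cA^\ast$-coefficients (which may then be subtracted without harm), one obtains the desired $r_l$-asymptotic expansion of $u_{sp}$.

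The main obstacle is to carry out the Mellin-type extraction rigorously and uniformly in $r_l$, and to close the induction on $\Re s$: the finite-tails property \eqref{eq:indexsetfinitetails} guarantees that only finitely many indices of $E_k$ lie below any fixed threshold, so the induction terminates at every finite level. A secondary delicate point is to verify that the remainders after truncation remain jointly conormal in $(r_k,r_l)$, for which one exploits the ambient joint conormality furnished by $u\in\cA_{phg}^\ast(X)$ together with the classical characterization of polyhomogeneity in terms of Mellin transforms. These are standard technicalities in Melrose's theory of polyhomogeneous distributions and can be executed as in \cite{melrose_book}.
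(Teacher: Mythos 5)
The paper gives no proof of this lemma --- it is quoted directly from \cite{melrose_book} --- so your sketch can only be measured against the standard argument there. Your overall structure (induction on dimension; uniqueness of asymptotic coefficients to identify the $u_{sp}$ with the coefficients of the ambient $\cA_{phg}^{\ast}$-expansion, hence $u_{sp}\in\cA_{phg}^{\ast}(\cB_k)$; then a corner analysis to pin the exponents of $u_{sp}$ at $\cB_k\cap\cB_l$ inside $E_l$) is the right one, and the base case and point (a) are fine.

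The gap is in step (b). The sentence claiming that the extraction ``commutes with any polyhomogeneous structure in $r_l$'' asserts exactly what has to be proved at a corner, and the way you propose to use it is circular: applying the $r_k$-extraction term-by-term to $u\sim\sum_{(s',p')\in E_l}r_l^{s'}(\log r_l)^{p'}v_{s'p'}$ presupposes that each $v_{s'p'}\in\cA^\ast(\cB_l)$ possesses an $r_k$-expansion at $\cB_k\cap\cB_l$, which is the mirror image of the claim you are establishing for $u_{sp}$; moreover the interchange of the $r_k\to 0$ limit with the asymptotic sum in $r_l$ requires the truncation remainders to stay in $r_l^N\cA^\ast$ jointly, which is not automatic for a pointwise limit. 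Two standard ways to close this: (i) use $u\in\cA_{phg}^\ast(X)$ to produce a genuine iterated expansion at the corner with joint coefficients $c_{sp,s'p'}$ and some index sets $F_k,F_l$; reorganizing this double series first in $r_k$ and then in $r_l$ and invoking uniqueness of asymptotic expansions in each variable separately shows $c_{sp,s'p'}=0$ unless $(s,p)\in E_k$ and $(s',p')\in E_l$, and identifies $u_{sp}$ with the sub-series $\sum_{(s',p')}r_l^{s'}(\log r_l)^{p'}c_{sp,s'p'}$, which is the required expansion; or (ii) replace the pointwise extraction by the differential characterization of polyhomogeneity, namely that $u$ is polyhomogeneous at $\cB_k$ with index set $E_k$ iff $u\in\cA^\ast$ and $\prod_{(s,p)\in E_k,\,\Re s\le N}(r_k\p_{r_k}-s)\,u\in r_k^{N}\cA^\ast(X)$ for every $N$; this test involves only the weighted conormal spaces, which are manifestly stable at corners, so the face-by-face conditions combine with no interchange of limits. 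With either device your induction closes; as written, (b) states the conclusion of the key step rather than deriving it.
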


\begin{remark}
The spaces $\cA^{\cE}_{phg}(M)$ are very natural for the goal of this article, mainly for studying the operator $\Delta - cS$. As we have already mentioned in the introduction, the indicial operator $I(x^2(\Delta - cS))$ plays an important role in all of our constructions. As formula~\eqref{eq:indicial_op} shows, this is just a second-order ODE with a regular singular point. The classical theory of such equations~\cite{ode} implies that solutions to $I(x^2(\Delta - cS))u = 0$, should have exactly the asymptotics~\eqref{eq:phg_asymptotics}. Later we will see that this kind of asymptotics holds also for the solution of the equation $(\Delta-cS)u=0$ and, more generally, for $\alpha$-elliptic differential operators under certain additional assumptions.

Another useful property of the spaces $\cA_{phg}^{\cE}(X)$ is that, much like the standard classes of symbol functions, they obey a form of asymptotic completeness. As in the classical case this is a consequence of a lemma of Borel which here can be written in the following way.
\end{remark}

\begin{lem}\label{lem:borellemma}
    Let $X$ be a compact manifold with boundary, and $E$ a corresponding index set. If $u_{s,p}\in C^\infty(X)$ is given for each $(s,p)\in E$ then there exists $u\in \cA_{phg}^E(X)$ satisfying \eqref{eq:simplephg_asymptotics}. Further, if $u'\in \cA_{phg}^E(X)$ is another element with the same asymptotic expansion then $u-u'\in \dot C^\infty(X)$.
\end{lem}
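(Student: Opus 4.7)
The plan is to execute the standard Borel-type summation adapted to the polyhomogeneous setting, via a cutoff sum at shrinking scales. First I would enumerate $E$ as a sequence $\{(s_j, p_j)\}_{j\geq 1}$ with $\Re(s_j)$ non-decreasing, which is possible because condition \eqref{eq:indexsetfinitetails} guarantees that only finitely many elements of $E$ lie in any vertical strip $\{\Re(s)\leq M\}$. I would then fix a cutoff $\chi \in C^\infty_c([0,\infty))$ with $\chi \equiv 1$ on $[0,1/2]$ and $\supp \chi \subset [0,1)$, and work in a collar neighbourhood $[0,1)_r \times \p X$ where $r$ is the boundary defining function.

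The candidate function is
\begin{equation*}
u(r,y) \;=\; \sum_{j=1}^\infty \chi\!\left(\frac{r}{\varepsilon_j}\right) r^{s_j} (\log r)^{p_j}\, u_{s_j, p_j}(y),
\end{equation*}
where the scales $\varepsilon_j \searrow 0$ are chosen inductively so that for every $k \leq j$ the $k$-fold $\partial_r$-derivative of the $j$-th summand, measured in sup-norm valued in $C^k(\p X)$, is at most $2^{-j}$. This is feasible because on $\supp \chi(r/\varepsilon_j)$ one has $r \leq \varepsilon_j$, and a Leibniz expansion shows that the relevant derivatives are bounded by $C_{j,k}\, \varepsilon_j^{\Re(s_j)-k-\delta}$ for any $\delta>0$, which vanishes as $\varepsilon_j \to 0$ provided $\Re(s_j) > k$; since $\Re(s_j) \to \infty$ this constraint is eventually compatible with $k \leq j$.

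To check that $u \in \cA^E_{phg}(X)$ satisfies \eqref{eq:simplephg_asymptotics}, I would fix $N \in \N$ and write $u = u_{\leq N} + u_{>N}$, collecting the finitely many terms with $\Re(s_j) \leq N$ into $u_{\leq N}$. In a sufficiently small neighbourhood of $r=0$ every cutoff in $u_{\leq N}$ is identically $1$, so $u_{\leq N}$ contributes exactly the desired truncated expansion, while the tail $u_{>N}$ converges in $\dot{C}^N([0,1), C^\infty(\p X))$ by construction, matching \eqref{eq:asympsumdef}; Lemma~\ref{lem:phg-crit} then upgrades this to membership in $\cA^E_{phg}(X)$. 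For uniqueness, if $u' \in \cA^E_{phg}(X)$ realises the same expansion, the difference $u - u'$ has vanishing truncated expansion at every order, so by \eqref{eq:asympsumdef} it lies in $\dot C^N([0,1), C^\infty(\p X))$ for every $N$, which forces $u - u' \in \dot C^\infty(X)$.

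The only technical obstacle is the diagonal choice of the $\varepsilon_j$; the crucial observation that makes it routine is that iterated differentiation of $\chi(r/\varepsilon_j)$ produces factors of $\varepsilon_j^{-i}$, but these are absorbed by the vanishing rate $\varepsilon_j^{\Re(s_j)}$ coming from the monomial $r^{s_j}$ restricted to the cutoff support, with any $(\log r)^{p_j}$ losses absorbed into the infinitesimal $\delta$. Thus shrinking $\varepsilon_j$ makes every finite-order seminorm of the $j$-th summand arbitrarily small, and the diagonal sum converges in every $\dot C^N$ as required.
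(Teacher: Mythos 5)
Your proof is correct and follows essentially the same route as the paper's: a Borel-type summation $\sum_j \chi(r/\varepsilon_j)\,r^{s_j}(\log r)^{p_j}u_{s_j,p_j}$ with the scales $\varepsilon_j$ shrunk via a diagonal argument so that each tail converges in $\dot C^N$, followed by the same uniqueness argument via $\dot C^\infty(X)=\bigcap_N \dot C^N(X)$. You are somewhat more explicit than the paper about the seminorm estimates (the $\varepsilon_j^{\Re(s_j)-k-\delta}$ bound) and about why the finitely many low-index terms are harmless, but the underlying argument is identical.
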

\begin{proof}
    Choose $\chi\in C_c^\infty(\mathbb{R}_{\geq 0})$ with $\chi|_{\{t<1/2\}}= 1$ and $\chi|_{\{t>1\}}= 0$. We shall prove the existence of a sequence of constants $\epsilon_{s,p}\in (0,1)$ which decrease sufficiently fast to ensure that for each $N\in \mathbb{N}$ the series
    \begin{equation}\label{eq:borel-converge}
        \sum_{\substack{(s,p)\in E\\ \Re(z)>N}} \chi\left(\frac{r}{\epsilon_{s,p}}\right)\, r^s (\log r)^k \, u_{s,p} \; \text{converges absolutely in }\dot{C}^N(X).  
    \end{equation} 
    Namely, for each $N$, there exists a sequence of constants $\epsilon_{s,p}^{(N)}$ such that \eqref{eq:borel-converge} holds for that $N$ whenever $\epsilon_{s,p}<\epsilon_{s,p}^{(N)}$ for all $\Re(z)>N$. From the definition of an index set, we know that \eqref{eq:indexsetfinitetails} holds, thus any `left segment' $E\cap\{(s,p):\Re(s)<N\}$ is a finite set for each $N$. From this fact, all these conditions imposed on the sequence of constants $\epsilon_{s,p}$ is a finite number of conditions for each $N\in \mathbb{N}$; choosing a diagonal subsequence these constants can be chosen such that \eqref{eq:borel-converge} holds for all $N$. Thus the series converges asymptotically (i.e. in the sense of \eqref{eq:asympsumdef}) to an element $u\in \cA_{phg}^E(X)$ which satisfies \eqref{eq:simplephg_asymptotics}. The claim of uniqueness follows from the definition of $\cA_{phg}^E(X)$ and the fact that
    \[ \dot C^\infty(X) = \bigcap_{N\in \mathbb{N}} \dot{C}^N(X). \]
\end{proof}

\begin{remark} We observe for later purposes that if $u$ satisfies the asymptotic expansion \eqref{eq:simplephg_asymptotics} only \emph{in the weak sense}, i.e. that there is an expansion of this type for the pairing $\int_\cZ u(x,y)\chi(y)dy$ for any smooth test functions on $\cZ$, then  previous lemma holds as well. This is because for any such pairing, the integral $\int_\cZ u(x,y)\chi(y)dy$ is polyhomogeneous conormal in the strong sense, and satisfies \eqref{eq:asympsumdef}, where the difference now lies in $\dot{C}^N([0,1))$. 

Arguing similarly as in the lemma also proves the asymptotic completeness for manifolds with corners, and the spaces $\cA_{phg}^\cE(X)$ of polyhomogeneous conormal functions with respect to index families $\cE$ corresponding to the boundary hypersurfaces of $X$. \\
\end{remark}

The space $\cA^{*}_{phg}$ of such functions with unrestrained asymptotics will be used to construct parametrices to singular differential operators. But first, we need to understand how those spaces change under various mappings. These mappings, however, must have certain properties in order to send polyhomogeneous distributions to polyhomogeneous distributions. First, we consider those maps that can be used as pull-backs.

\begin{definition}
Let $M,N$ be two smooth manifolds with corners and let  $r_j$, $\rho_i$ be the corresponding sets of defining functions of boundary hypersurfaces. A continuous map $f: M \to N$ is called a \textit{$b$-map} if it has a smooth restriction to the interior of $M$ and there exist non-negative real numbers $e(i,j)$ and a smooth non-vanishing function $h\in C^\infty(M)$ such that
$$
f^*(\rho_i) = h \prod r_j^{e(i,j)}.
$$ 
The set of numbers $e(i,j)$ is called the \textit{lifting matrix}.
\end{definition}

\begin{remark}
Note that in our definition a $b$-map may not be smooth up to the boundary. However, $b$-maps we use in this article restrict to diffeomorphisms in the interior of the two manifolds, and the failure of smoothness up to the boundary will be encoded by a possible non-integer `lifting matrix' $e(i,j)$. This allows us to pull-back smooth densities, which in general will fail to be smooth in a very controlled way at the boundary faces. Moreover, the definition of polyhomogeneous expansions already includes non-integer weights. This means that there will be no problem in defining kernels and the construction of parametrices.  
\end{remark}

A principal property of $b$-maps is that they preserve polyhomogeneity. Given two manifolds with corners $M,N$ let $f: M \to N$ be a $b$-map with lifting matrix $e(i,j)$. If $\cF$ is a collection of index sets of $N$, define the following index set $\cE = f^{\#}(\cF)$ as follows:
\begin{equation}
E_j = \left\{\left(\sum_{i}e(i,j)s_i,\sum_{e(i,j)\neq 0}p_i \right): (s_i,p_i)\in F_i \right\}.
\end{equation}

\begin{thm}[\cite{melrose_book}]
Let $f: M \to N$ be a $b$-map and $v\in \cA_{phg}^\cF(N)$. Then $f^* v \in \cA_{phg}^{f^\# (\cF)}(M)$.
\end{thm}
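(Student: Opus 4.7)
The plan is to reduce to a local computation at each boundary hypersurface of $M$ and then invoke Lemma~\ref{lem:phg-crit}. Near any point of $M$, only finitely many boundary defining functions $r_j$ vanish, and by the $b$-map condition the same holds for the $\rho_i$ pulled back under $f$; so it suffices to work locally and verify that $f^*v$ admits an expansion at each $\cB_j$ of $M$ with exponents in the index set $E_j=\{(\sum_i e(i,j)s_i,\sum_{e(i,j)\neq 0}p_i):(s_i,p_i)\in F_i\}$ and with coefficients belonging merely to $\cA^{\ast}(\cB_j)$.

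First, I would fix a boundary hypersurface $\cB_j$ of $M$ and use the Borel-type Lemma~\ref{lem:borellemma} to write
\[
v = \sum_{\substack{(s_i,p_i)\in F_i\\ \sum_i \Re s_i \le N}} \prod_i \rho_i^{s_i}(\log\rho_i)^{p_i}\, v_{\{s_i,p_i\}} \;+\; R_N,
\]
where each coefficient $v_{\{s_i,p_i\}}$ is smooth up to the boundary faces of $N$ not met in this neighbourhood, and the remainder $R_N$ vanishes to arbitrarily high order at every $\cD_i$. Then I would compute, using $f^*(\rho_i)=h_i\prod_j r_j^{e(i,j)}$ with $h_i$ smooth and non-vanishing,
\[
f^*\bigl(\rho_i^{s_i}\bigr)=h_i^{s_i}\prod_j r_j^{s_i e(i,j)},\qquad f^*(\log\rho_i)=\log h_i+\sum_j e(i,j)\log r_j.
\]
Expanding the logarithmic powers via the multinomial theorem shows that each term $f^*\bigl(\prod_i\rho_i^{s_i}(\log\rho_i)^{p_i}\bigr)$ contributes, at $\cB_j$, a finite sum of monomials of the form $r_j^{\sum_i e(i,j)s_i}(\log r_j)^{p'_j}$ with $p'_j\le\sum_{e(i,j)\neq 0}p_i$, times a factor that is conormal in $r_j$ and smooth transversally. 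This is exactly what the index set $E_j=(f^\#\cF)_j$ prescribes.

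Next, I would deal with the remainder $R_N$: the $b$-map property guarantees that $f^*R_N$ vanishes to order at least $\min_j\sum_i e(i,j)\Re s_i$ at each $\cB_j$, which tends to $+\infty$ with $N$ by the tail-finiteness condition \eqref{eq:indexsetfinitetails}. Hence $f^*R_N\in r_j^{N'}\cA^0(M)$ with $N'\to\infty$, so the formal series obtained above truly is an asymptotic expansion of $f^*v$ at $\cB_j$ in the sense of \eqref{eq:asympsumdef}, with coefficients so far only known to lie in $\cA^{\ast}(\cB_j)$ (smooth in transversal variables, conormal in the remaining boundary defining functions of $M$). Because $f^*v$ is obviously conormal on all of $M$, Lemma~\ref{lem:phg-crit} then upgrades this to the statement that $f^*v\in\cA_{phg}^{f^\#(\cF)}(M)$.

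The main obstacle is the combinatorial bookkeeping when several hypersurfaces $\cD_i$ of $N$ map simultaneously into a single $\cB_j$, which is the source of the sums $\sum_i e(i,j)s_i$ and of the accumulation of log powers; once the multinomial expansion above is written down carefully one sees the definition of $f^\#\cF$ is forced. A secondary technical point is that the $b$-map here is only continuous, not smooth, at the boundary (because of non-integer entries $e(i,j)$), so one must be slightly careful that the factors $h_i^{s_i}$ and $(\log h_i)^k$ are still conormal; but $h_i$ is smooth and strictly positive, so $h_i^{s_i}$ and $\log h_i$ are smooth, and the argument goes through as in the classical integer-exponent case of \cite{melrose_book}.
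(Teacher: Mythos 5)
The paper does not prove this statement; it is quoted directly from \cite{melrose_book}, so there is no in-paper argument to compare against. Your proposal reproduces the standard proof from that reference — localize near a corner, split $v$ via the Borel lemma into a finite polyhomogeneous sum plus a rapidly vanishing remainder, pull back each monomial using $f^*\rho_i=h_i\prod_j r_j^{e(i,j)}$ and the multinomial expansion of $f^*\log\rho_i$, and upgrade via the criterion of Lemma~\ref{lem:phg-crit} — and the combinatorics correctly force the definition of $f^\#\cF$, so the argument is essentially right.

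Two points deserve tightening. First, the hypothesis of Lemma~\ref{lem:phg-crit} that you dismiss as ``obvious'' — conormality of $f^*v$ on $M$ — is the one step where the $b$-map structure is genuinely used beyond the exponent bookkeeping: one must check that repeated application of $V_1,\dots,V_k\in\cV_b(M)$ to $f^*v$ stays bounded, which follows because the $b$-differential ${}^bf_*$ carries $\cV_b(M)$ into sections of $f^*({}^bTN)$ with bounded coefficients (this is exactly why the paper's remark about ${}^bf_*$ extending continuously for non-integer $e(i,j)$ matters); a sentence to this effect should replace ``obviously''. Second, your remainder estimate ``$f^*R_N$ vanishes to order at least $\min_j\sum_i e(i,j)\Re s_i$'' is garbled: what you want is that if $R_N\in\dot C^N$ near the faces $\cD_i$, then $f^*R_N$ vanishes at $\cB_j$ to order at least $N\cdot\min\{e(i,j):e(i,j)>0\}$, which tends to infinity with $N$ provided some $e(i,j)>0$; when $e(i,j)=0$ for all $i$ the face $\cB_j$ is mapped into the interior of $N$ and no nontrivial expansion is required there. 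With these repairs the proof is complete.
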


Now we consider the push-forwards. In this case, it is not enough to consider only $b$-maps. More precisely we have to restrict to the following classes of maps.

\begin{definition}
A $b$-map $f:M \to N$ is a \textit{$b$-fibration} if it does not map any boundary hypersurface to a corner and it is a fibration in the interior of each boundary hypersurface.
\end{definition}

\begin{remark}
A reader familiar with the construction of other geometric pseudodifferential operator calculi is probably more used to the definition using the $b$-differential and could naturally ask whether the $b$-differential makes sense in this situation. Given a manifold with corners, we can construct the $b$-tangent bundle ${}^b TM$ and the $b$-cotangent bundle ${}^b T^*M$ in a manner similar to ${}^\alpha TM$ and ${}^\alpha T^*M$ similar to~\eqref{alpha-tgtbdl}. The $b$-tangent bundle comes with a natural bundle map $e_b:{}^b TM \to TM$, called the anchor. When restricted to the interior, the anchor is a bundle isomorphism. Now given a $b$-map $f:M\to N$, we can lift the differential and its dual to maps ${}^b f_{*}: {}^b TM \to {}^b TN $ and ${}^b f^{*}: {}^b T^*M \to {}^b T^*N $ as follows. We use the anchor map $e_b$ to identify ${}^b TM$ with $TM$ in the interior and take ${}^b f_{*}$ to be standard differential. Next, we extend it by continuity to the boundary. As~\cite[Lemma 2.3.1]{melrose_book} shows, such extension is well-defined even for non-integer lifting matrices (see formula (2.3.5) of the same source~\cite{melrose_book}). Thus we can alternatively define the $b$-fibration in a more classical way as a $b$-map with a surjective $b$-differential. This definition is equivalent to ours as~\cite[Proposition 2.4.2]{melrose_book} shows.   
    
\end{remark}

\begin{definition}
\label{def:extended_union}
Given two index sets $E$ and $F$, their \textit{extended union} is the index set defined by
$$
E \overline{\cup} F = E \cup F \cup \{(s,p+q+1):(s,p)\in E, (s,q)\in F\}.
$$
\end{definition}
 
Assume again that we are given two manifolds with corners $M,N$. Let $f: M \to N$ be a $b$-fibration. If $\cE$ is a collection of index sets of $M$, define the index set $\cF = f_{\#}(\cE)$ as follows:
$$
F_i = \overline{\bigcup}_{\cB_j(X_1)\subset f^{-1}\cB_j(X_2)} \left\{\left(\frac{s}{e(i,j)},p \right):(s,p)\in E_j \right\}.
$$

\begin{thm}[\cite{melrose_book}]
\label{prop:push}
Let $f: M \to N$ be a $b$-fibration and let $\cE$ be a collection of index sets $\cE$ on $M$ such that $\Re(E_j)>0$ whenever $f(\cB_j)\nsubseteq \p B$. If $u\in \cA^\cE(N)$, then $f_* u \in \cA^{f_\# (\cE)}(M)$.
\end{thm}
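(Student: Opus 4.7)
The plan is to reduce to a local model via a partition of unity on $M$ and then explicitly analyze the asymptotics of the resulting iterated integral (this is the classical pushforward theorem of Melrose). First I would choose a partition of unity on $M$ subordinate to a finite cover by coordinate charts; since $\cA_{phg}^\cE(M)$ is preserved by multiplication by smooth cutoffs and $f_*$ commutes with the corresponding sum, the theorem reduces to a local statement near a point $p\in M$ with image $q=f(p)\in N$.

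Second, I would invoke the local normal form for $b$-fibrations. Because $f$ sends no boundary hypersurface of $M$ into a corner of $N$ and is a submersion on the interior of each boundary hypersurface, one can choose coordinates $(r_1,\ldots,r_k;z)$ near $p$ and $(\rho_1,\ldots,\rho_m;w)$ near $q$ in which the boundary defining functions $r_j$ partition into disjoint groups $J_1,\ldots,J_m$ (one for each $\rho_i$ hit by $f$), together with a ``fiber'' group $J_0$ of indices corresponding to the hypersurfaces $\cB_j$ with $f(\cB_j)\subset N^\circ$, such that
\[
f^*\rho_i \;=\; h_i\,\prod_{j\in J_i} r_j^{e(i,j)}, \qquad h_i\in C^\infty \text{ nonvanishing},
\]
and the interior coordinates $w$ are obtained as a submersive image of $(z;\{r_j:j\in J_0\})$.

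Third, I would write the pushforward as an explicit iterated integral against a smooth fibered density and substitute the polyhomogeneous expansion of $u$. Integrating first in the interior $z$-variables and then in the $r_j$ with $j\in J_0$ (where convergence is precisely guaranteed by the hypothesis $\Re(E_j)>0$), one is left with a model integral of the schematic form
\[
\int \prod_{j\in J_i,\; i\geq 1} r_j^{s_j}(\log r_j)^{p_j}\;\phi(r;\rho)\;\prod_{i\geq 1}\prod_{j\in J_i\setminus\{j_i^{*}\}} dr_j,
\]
where one distinguished index $j_i^{*}\in J_i$ has been used to parameterize $\rho_i$ via the monomial substitution above. After the change of variables $\rho_i=h_i\prod_{j\in J_i} r_j^{e(i,j)}$, the asymptotics in $\rho_i\to 0$ receive a separate contribution from each $r_j\in J_i$, with exponent rescaled by $1/e(i,j)$; this is exactly the ``union'' part of $F_i=f_{\#}(\cE)_i$. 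The extended union enters precisely when two different $r_j$'s in the same group $J_i$ contribute the same complex exponent $s/e(i,j)$: resonance produces an extra factor of $\log\rho_i$, in complete analogy with the appearance of a double pole when two simple poles coincide under a Mellin transform.

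The main obstacle is this last step: carefully tracking the logarithmic corrections produced by resonance and checking that the combinatorics are accounted for exactly by Definition~\ref{def:extended_union}. This is most cleanly carried out via a Mellin transform analysis, extracting residues of $\rho\mapsto \int r^{s}(\log r)^{p}\phi\,dr$ at its poles, or equivalently by an iterated integration-by-parts argument performed uniformly across each finite left segment $\{\Re(s)<N\}$ of the index family. Once this expansion is established at each boundary hypersurface of $N$ with coefficients in $\cA^{\ast}$ (with exponents lying in $F_i$), Lemma~\ref{lem:phg-crit} upgrades the result to a genuine polyhomogeneous conormal distribution on $N$ with index family $f_{\#}(\cE)$, completing the proof.
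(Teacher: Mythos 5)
The paper does not prove this theorem at all: it is imported verbatim from \cite{melrose_book} as a black box, so there is no internal argument to compare yours against. Your sketch is the standard proof of Melrose's pushforward theorem and is essentially correct in structure: localization, the local monomial normal form of a $b$-fibration (with the boundary defining functions of $M$ partitioned into disjoint groups, one per target hypersurface plus a fiber group), convergence of the fiber integral from the positivity hypothesis, the exponent rescaling by $1/e(i,j)$ producing the union, resonance producing the logarithms of the extended union, and the upgrade to genuine polyhomogeneity via Lemma~\ref{lem:phg-crit}.

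Two small points worth tightening if you were to write this out in full. First, the pushforward and the hypothesis $\Re(E_j)>0$ are normalized with respect to $b$-densities, i.e.\ the fiber integral is against $\prod dr_j/r_j$, not $\prod dr_j$ as in your schematic integral; with an ordinary Lebesgue density the threshold would shift to $\Re(E_j)>-1$, and indeed the paper is careful to convert to $b$-densities (e.g.\ in the proof of Theorem~\ref{thm:action_alpha}) before invoking the result. Second, in this paper the lifting matrix $e(i,j)$ is allowed to be non-integer, so the $b$-fibration is not smooth up to the boundary; your monomial change of variables and the Mellin/residue analysis go through unchanged, but one should say explicitly that nothing in the argument uses integrality of the exponents, only that the index sets are allowed arbitrary complex exponents — which is exactly the remark the paper makes when justifying its use of the theorem in this generality.
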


A typical example of a $b$-map is given by a blow-down map.
\begin{lem}
Let $P \subset M$ be a $p$-manifold with blow-up data $(\cF,\alpha)$. Let $B'_i$, $i=1,\dots, q$ be boundary hypersurfaces of $M$ and $r'_i$ be their boundary defining functions. 

The blow-down map is $b$-map, i.e. we have 
$$
b^* r'_i = f\prod_{j=0}^q r_j^{e(i,j)},
$$
where $r_j$ are defining function of boundary hypersurfaces $\cB_j$ of $[M;P]_{\Lambda}$ and $f\neq 0$ is a smooth function on $[M;P]_{(\cF,\Lambda)}$.  
\end{lem}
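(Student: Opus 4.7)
The plan is to verify the $b$-map factorization locally, using the explicit projective coordinates from equation~\eqref{eq:projective_coordinates}, and then observe that the exponents appearing are intrinsic so the local factorizations patch globally. Away from the front face $\beta^{-1}(P)$ there is nothing to prove: the blow-down is a diffeomorphism there, so for each boundary defining function $r'_i$ on $M$, the pullback $\beta^*r'_i$ either equals a smooth nonvanishing multiple of the defining function $r_j$ of the unique hypersurface of $[M;P]_{(\cF,\Lambda)}$ mapping onto $B'_i$, or is itself nowhere-zero and smooth on the chart. In either case the required product formula holds trivially, with all but at most one of the $e(i,j)$ being zero.

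The content of the lemma is therefore entirely near the front face. Pick local coordinates $(x_1,\ldots,x_k,y_1,\ldots,y_m)$ on $M$ adapted to the $p$-submanifold structure, so that locally $P=\{x_1=\cdots=x_k=0\}$ and each normal coordinate $x_j$ carries a weight $\lambda(j)\in\{\lambda_1,\ldots,\lambda_s\}$ determined by the filtration $\cF$. Fixing some index $i$ and working in the projective chart where $x_i$ is principal, the change of coordinates $\xi_i=|x_i|^{1/\lambda(i)}$ and $\xi_j=x_j/|x_i|^{\lambda(j)/\lambda(i)}$ ($j\neq i$) gives
\begin{equation*}
    \beta^*x_i = \pm\xi_i^{\lambda(i)},\qquad \beta^*x_j = \xi_i^{\lambda(j)}\,\xi_j \ (j\neq i),\qquad \beta^*y_b = y_b.
\end{equation*}
Since any defining function $r'_a$ for a hypersurface $B'_a$ of $M$ is, in these coordinates, a smooth nonvanishing multiple of a single coordinate function, pulling back yields the announced factorization. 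The possible exponents $e(a,j)$ equal $\lambda(a)$ when $\cB_j$ is the front face, equal $1$ when $\cB_j$ is the lift of $B'_a$, and vanish otherwise; the smooth nonvanishing factor is the pull-back of the smooth multiplier in $r'_a$.

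To finish, one has to check that these local factorizations are compatible on overlaps, so that the exponents $e(i,j)$ and the factor $f$ are globally defined. But $e(i,j)$ is manifestly the intrinsic order of vanishing of $\beta^*r'_i$ along $\cB_j$, which does not depend on the chart; and on the overlap of two projective charts (corresponding to different principal coordinates $x_i$ and $x_{i'}$) both $\xi_i$ and $\xi_{i'}$ are nonzero, so the transition map is smooth by the construction of the blown-up space, and the two local representations for $f$ differ only by a smooth nonvanishing factor. Hence $f=\beta^*r'_i/\prod_j r_j^{e(i,j)}$ extends to a globally defined smooth nonvanishing function on $[M;P]_{(\cF,\Lambda)}$.

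The only mild obstacle is the bookkeeping in the overlap of projective charts when several weights $\lambda_i$ coincide or when $P$ meets several boundary hypersurfaces of $M$ simultaneously; in those cases one must be careful that the exponent attached to the front face in the factorization of $\beta^*r'_i$ really is the weight $\lambda(i)$ of the coordinate direction corresponding to $B'_i$ and not of some other direction. This is immediate from the definition of the dilation $\delta_t$ in equation~\eqref{eq:dilations}, which prescribes the order of vanishing of $x_i$ along the front face to be exactly $\lambda(i)$.
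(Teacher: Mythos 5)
Your proof is correct and follows essentially the same route as the paper's: invert the projective coordinate change \eqref{eq:projective_coordinates} to get $\beta^*x_i=\pm\xi_i^{\lambda(i)}$, $\beta^*x_j=\xi_i^{\lambda(j)}\xi_j$, and read off that the front-face exponent is the weight of the coordinate direction normal to $B'_i$ while the exponent at the lift of $B'_i$ is $1$ and all others vanish. The extra paragraphs on the region away from the front face and on chart overlaps are routine additions the paper leaves implicit.
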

The exact values of $e(i,j)$ are given in Lemma~\ref{lemm:blow-up_matrix} in Section~\ref{sec:proofs}.

A non-example of a b-fibration (which is still a b-map) is the polar coordinates map,
\[ \beta: \R_+\times [0,\pi/2]\to \R_+^2, \quad (r,\theta)\mapsto (r\cos\theta,r\sin\theta)  \]
as it maps the front face $\{r=0\}$ to a corner. More generally any non-trivial blow-down map will fail to be a b-fibration for this same reason. 

On the other hand, the map $\beta_L:=\pi_L\circ \beta:M_\alpha^2\to M$ will be a b-fibration, as there are no corners in $M$ to map into, and this map fibers over the interiors of each boundary hypersurfaces of $M_\alpha^2$ by construction.

\subsection{$\alpha$-calculus and existence of a parametrix}

\label{subsec:parametrix_statement}

Recall that from the construction of the double space, we have the blow-down map $\beta^2_\alpha: M^2_\alpha \to M^2$. This allows us to lift the diagonal from $M^2$ to the double space as
$$
\Delta_\alpha = \overline{(\beta_\alpha^2)^{-1}(\Delta_{int})},
$$
where $\Delta_{int}$ is the restriction of the diagonal to the interior of $M^2$. Similarly as in the case of the zero-calculus we associate to $M$ a density bundle
$$
\Omega_\alpha(M) = (\overline{r})^{-1-n-\alpha n} \Omega(M),
$$ 
where $n = \dim M - 1$ and $\overline{r}$ is the product of defining functions of all boundaries of $M$. This multiplicative factor is determined by the determinant of the Jacobian of the blow-down map $\beta_\alpha^2$, and is computed in Lemma 5.3. It is also convenient to introduce a different density bundle
$$
\Omega_b(M) = \overline{r}^{-1} \Omega(M),
$$ 
which will be used to understand the mapping properties of operators from the $\alpha$-calculus. From here we can define the half-density bundles $\Omega_\alpha^{1/2}$ and $\Omega_b^{1/2}$, which are useful when studying Schwartz kernels of linear operators. Recall that we can multiply two half-densities which gives a density. Moreover, half-density bundles are one-dimensional and trivializable. So we can choose a reference non-vanishing section of this bundle.

For example, in our case close to the singular set $\cZ$ we have local coordinates $x,y\in \R^n$. We can take a reference section of the half-density bundle $\Omega^{1/2}(M)$ in these coordinates to be
$$
\sqrt{\gamma} = \sqrt{dxdy}.
$$
Any other section would be given by $f\sqrt{\gamma}$, where $f$ is a smooth function. Thus, if we have two sections $f\sqrt{\gamma}$ and $g\sqrt{\gamma}$, their product will be a section of the density bundle $(f g)\gamma \in \Omega(M)$, which can be integrated over $M$. In particular, we can define the space square-integrable sections of $\Omega^{1/2}$ as
$$
L^2(M,\Omega^{1/2}) = \left\{\sigma \in \Gamma(\Omega(M))\,:\,\int_M \sigma^2 < \infty \right\}.
$$
If we take a reference half-density $\sqrt{\gamma}$ then a section $\sigma=f\sqrt{\gamma}$ will be square integrable if and only if $f$ is square integrable with respect to $\gamma$. Thus we have a natural isomorphism
\begin{equation}
\label{eq:density_iso}
L^2(M,\Omega^{1/2})\simeq L^2(M,\gamma).
\end{equation}
We work with half-densities to prove all the main results in the calculus, but in its concrete applications, it is much more convenient to fix a trivializing section as we will see later.

We begin by recalling the characterization of classical pseudodifferential operators to see how they are generalized in the $\alpha$-calculus. Recall that the classical pseudo-differential operators $\Psi^s(M)$ on smooth compact manifolds without boundary we have a principal symbol map 
\[ \sigma: \Psi^r(M)\to C^\infty(T^*M), \quad \text{defined by }  \sigma(L)(x,\xi)= \lim_{t\to \infty} t^{-r} e^{-itf} L e^{itf}, \;\; df_x=\xi, \]
which provides an invariant definition of this algebra homomorphism. The homomorphism takes the highest order part of the operator and produces a function which is degree $r$-homogeneous in the fibers of $T^*M$. Choosing coordinates, it admits a  particularly intuitive description for differential operators, namely if we have a differential operator with smooth coefficients  
$$
L = \sum_{|I| \leq s} a_I(x) \p_x^I,
$$
where $I$ is a multi-index, then its principal symbol is given by
$$
\sigma(L)(x,\xi)= \sum_{|I| = r} a_I(x) (i\xi)^I.
$$
The symbol map is a part of a short exact sequence
\begin{equation}
\label{eq:small_exact}
  \begin{tikzcd}
0 \arrow[r] & \Psi^{r-1}(M) \arrow[r] & \Psi^{r}(M) \arrow[r, "\sigma"] & C^\infty(T^*M) \arrow[r] & 0
\end{tikzcd}  
\end{equation}
which is the central property for an iterative parametrix construction. It allows us to construct an inverse modulo $\Psi^{-\infty}(M)$, whenever the principal symbol $\sigma(L)$ is an invertible operator. It is well-known that in this situation that $\Psi^{-\infty}(M)$ consists of compact operators~\cite{treves}. 

In the case of $\alpha$-calculus we are interested in inverting differential operators of the form
$$
L=\sum_{j + |\beta| \leq m}a_{j,\beta}(x,y)(x\p_x)^j(x^{1+\alpha}\p_y)^\beta,
$$
which is an element of $\Psi^m_\alpha(M)$ for $m\in \N$. Its principal symbol can be defined as
$$
^\alpha \sigma_m(L) = \sum_{j + |\beta| = m}a_{j,K}(x,y) (i\xi)^j (i\eta)^\beta.
$$
Principal symbols in this calculus are also smooth functions on the $\alpha$-cotangent bundle $^\alpha T^* M$, (see \eqref{alpha-tgtbdl}), and we get an analogue of the small exact sequence~\eqref{eq:small_exact}:
\begin{center}
  \begin{tikzcd}
0 \arrow[r] & \Psi^{s-1}_\alpha (M) \arrow[r] & \Psi^{s}_\alpha (M) \arrow[r, "^\alpha \sigma"] & C^\infty(^\alpha T^*M) \arrow[r] & 0 ,
\end{tikzcd}  
\end{center}
and correspondingly we say an operator is $\alpha$-elliptic if its $\alpha$-principal symbol is invertible on ${}^\alpha T^*M\setminus \{0\}$. Classical pseudodifferential constructions, depending only on the $\alpha$-principal symbol, can now produce an inverse modulo $\Psi^{-\infty}_\alpha(M)$, the smoothing operators. However, in contrast to the classical case elements of $\Psi_\alpha^{-\infty}(M)$ increases regularity but is not a compact operator~\cite{melrose_book}. In order to obtain an inverse modulo a compact operator, one needs to continue improving the parametrix using the large-calculus $\Psi^{s,\cE}_\alpha(M)$.

Now we are ready to give a definition of the operators in the $\alpha$-calculus.

\begin{definition}
The large calculus is defined as comprised of linear operators whose Schwartz kernels satisfy precisely
$$
A \in \Psi^{s,\cE}_\alpha(M) \iff \cK_A \in \cA_{phg}^\cE I^s(M^2_\alpha,\Delta_\alpha; \Omega_\alpha^{1/2}),
$$
for any index set $\cE=\{E_{10},E_{01},E_{11}\}$. This is the space of all linear operators with distributional kernels with conormal singularities at the lifted diagonal, which have polyhomogeneous expansions with index set $\cE$. The small $\alpha$-calculus is a special case defined as
$$
A \in \Psi^s_\alpha(M) \iff \cK_A \in \cA_{phg}^{(\infty,\infty, \N_0)}I^s(M^2_\alpha,\Delta_\alpha; \Omega_\alpha^{1/2}).
$$
i.e. its Schwartz kernel vanishes to infinite order at the side faces and has a `Taylor expansion' at the front face.
\end{definition} 

One example is the identity operator, whose Schwartz kernel is a distributional section of $\Omega^{1/2}(M^2)$, expressible in coordinates near $\p M_{\text{diag}}$ as,
\[ \cK_{\id}(x,y,\widetilde{x},\widetilde{y}) = \delta(x-\widetilde{x})\delta(y-\widetilde{y}) \mu \]
where $\mu=\sqrt{dxdy d\widetilde{x} d\widetilde{y}}\in \Omega^{1/2}(M^2)$. Now the half-density $\mu$ pulls back along $\beta_\alpha^{(2)}:M_\alpha^2\to M^2$ to the half-density $r^{1+(1+\alpha)n}\nu$ (a direct consequence of Lemma~\ref{lemm:densities}), where $\nu$ is a smooth half-density on $\Omega^{1/2}(M_\alpha^2)$, i.e. $\nu=\sqrt{drd\theta d\widetilde{y}}$. Thus our operator lifts to one of the form, 
\[ (\beta_\alpha^{(2)})^*\cK_{\id} = \delta(\theta_x-\theta_{\widetilde{x}})\delta(\theta_y) \,r^{-1-(1+\alpha)n}\nu = \delta(\theta_x-\theta_{\widetilde{x}})\delta(\theta_y) \widetilde{\nu}  \]
in the coordinates \eqref{polar-coords}, and $\widetilde{\nu}\in \Omega_\alpha^{1/2}$.

In order to construct a good parametrix, we need to take into account the behavior at the front face. So we consider the restriction of the lift of $L$ to the front face of $M_\alpha^2$. Locally we have projective coordinates
\begin{equation}\label{proj-coords}
(s,u,\tilde{x},\tilde{y}): = \left(\frac{x}{\tilde{x}},\frac{y-\tilde y}{\tilde x^{1+\alpha}},\tilde{x},\tilde{y} \right),
\end{equation}
where $\tilde{x} = 0$ defines the front face. This way we obtain a coordinate expression of restriction of the lift of $L$ to the front face:
\begin{equation}
N(L) = \sum_{j + |\beta| \leq m}a_{j,\beta}(0,\tilde y)(s\p_s)^j(s^{1+\alpha}\p_u)^\beta.
\end{equation}
For each $\tilde y\in \p M$ fixed, this operator is called the \textit{normal operator} of $L$. In Section~\ref{sec:proofs} it is proven that $L$ is invertible modulo a compact operator if it is elliptic in the interior and all $N(L)$ are invertible between suitable Hilbert spaces. Note that each $N(L)$ can be reduced to just a 1D operator by taking partial Fourier transform in the $u$ variable ($\p_u\mapsto -i\eta$). After normalizing this transformed variable $\hat{\eta} = \eta/|\eta|$, we get a family of operators parameterized by $(y,\hat{\eta})\in S^*(\p M)$ via 
$$
\widehat{N(L)}_{(y,\hat{\eta})} = \sum_{j + |\beta| \leq m}a_{j,\beta}(0,y)(s\p_s)^j(is^{1+\alpha}\hat{\eta})^\beta.
$$
In Theorem~\ref{thm:simpleparametrix} we prove that the invertibility of those operators is mostly determined by the behavior of these operators close to $s=0$. More precisely we can define the indicial operators as mentioned in the introduction, by rescaling $s\mapsto \lambda s$ and writing the invariant term, which will be given by
$$
I(L;y) = \sum_{j \leq m}a_{j,0}(0,y)(s\p_s)^j.
$$
This is family of scale invariant differential operators on $\R_{\geq 0}$ and as such can be reduced to a polynomial via the Mellin transform:
\begin{definition}
The Mellin transform of a function $f(s)$ is an integral transform of the form
$$
\cM[f](\lambda)=\int_0^\infty s^{-\lambda} f(s)\frac{ds}{s}.
$$
The inverse Mellin transform is given by 
\begin{equation}
\label{eq:inv_Mellin}
\cM^{-1}[F](x) = \frac{1}{2\pi i}\int_{\Re(\lambda) = c}x^{\lambda}F(\lambda)d\lambda.
\end{equation}
where we integrate over a horizontal line.
\end{definition}
We use a slightly non-standard definition of Mellin transform different from~\cite{mazzeo}, but which is closer to one that can be found, for example, in~\cite{mellin}. It has the following mapping property.  

\begin{thm}
\label{thm:mellin_reg}
Mellin transform $\cM$ is an isomorphism between 
\[ \text{ $x^\delta L^2([0,\infty])$ and $L^2(\left\{\Re(z)=\delta - \tfrac{1}{2}\right\})$. } \]
\end{thm}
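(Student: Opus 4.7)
The plan is to reduce the theorem to the standard Plancherel isomorphism for the Fourier transform via the exponential substitution $s = e^t$. First, I would rewrite the Mellin integral. Setting $s = e^t$, so that $ds/s = dt$ and $s^{-\lambda} = e^{-\lambda t}$, the Mellin transform becomes
\[
\cM[f](\lambda) \;=\; \int_{-\infty}^{\infty} e^{-\lambda t}\, f(e^t)\, dt.
\]
Writing $\lambda = \sigma + i\tau$ with $\sigma = \delta - \tfrac{1}{2}$, this identifies $\cM[f]$ on the vertical line $\{\Re(\lambda) = \sigma\}$ with (a constant multiple of) the Fourier transform in $\tau$ of the function $g(t) := e^{-\sigma t}\,f(e^t)$.

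Next, I would verify the norm equivalence connecting the two function spaces. The substitution $s=e^t$ gives
\[
\int_{-\infty}^{\infty} |g(t)|^2\, dt \;=\; \int_{-\infty}^\infty e^{-2\sigma t}|f(e^t)|^2\, dt \;=\; \int_0^\infty s^{-2\sigma - 1}|f(s)|^2\, ds \;=\; \int_0^\infty s^{-2\delta}|f(s)|^2\, ds,
\]
which is precisely $\|f\|_{x^\delta L^2([0,\infty))}^2$ (interpreting $x^\delta L^2$ as the weighted space with norm $\|f\|^2 = \int |f(s)|^2 s^{-2\delta}\, ds$). Thus the map $f \mapsto g$ is an isometric isomorphism from $x^\delta L^2([0,\infty))$ onto $L^2(\R, dt)$.

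Finally, by the Plancherel theorem for the Fourier transform on $\R$, $g \mapsto \hat g$ is an isometric isomorphism (up to the normalization $\sqrt{2\pi}$) from $L^2(\R,dt)$ onto $L^2(\R, d\tau)$, and the latter is naturally identified with $L^2\!\left(\{\Re(\lambda) = \delta - \tfrac{1}{2}\}\right)$ via $\lambda = \sigma + i\tau$. Composing these two isomorphisms yields the result, with the explicit inversion formula matching the one stated in \eqref{eq:inv_Mellin} by contour considerations.

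I do not anticipate any genuine obstacle beyond bookkeeping: the only subtle point is ensuring the normalization convention for $L^2$ on the vertical line (whether one includes $\tfrac{1}{2\pi}$ in the norm or in the transform) is consistent with the non-standard normalization of $\cM$ adopted here, in particular that $ds/s$ appears in the defining integral but $ds$ (Lebesgue measure) in $x^\delta L^2$, which is exactly what produces the shift $\delta \mapsto \delta - \tfrac{1}{2}$ in the location of the contour. Once this convention is fixed, the argument is a one-line reduction to Plancherel.
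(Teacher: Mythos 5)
Your argument is correct, and it is the standard (essentially the only) proof of the Mellin--Plancherel theorem: the substitution $s=e^t$ turns $\cM$ on the line $\{\Re\lambda=\sigma\}$ into the Fourier transform of $g(t)=e^{-\sigma t}f(e^t)$, the weight bookkeeping $\int|g|^2\,dt=\int_0^\infty s^{-2\sigma-1}|f(s)|^2\,ds$ forces $\sigma=\delta-\tfrac12$ exactly as you say, and Plancherel finishes the job. The paper itself states this theorem without proof, as a known mapping property of its (slightly non-standard) normalization of $\cM$, so there is nothing to compare against; your identification of the one genuinely delicate point --- that the measure on $[0,\infty)$ is Lebesgue $ds$ while the transform is defined against $ds/s$, which is what produces the $-\tfrac12$ shift of the contour --- is precisely the right thing to check, and it is consistent with the inversion formula \eqref{eq:inv_Mellin}.
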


Straight from the definition, we get the following property 
$$
\cM[s\p_s u] = \lambda \cM[u].
$$
Thus if we apply the Mellin transform to the indicial operators, we get a family of indicial polynomials, which in turn should be invertible away from their roots. 

This motivates the following definition:
\begin{definition}\label{def:indicialroots}
The boundary spectrum, or \emph{indicial roots}, at $y\in \p M$ of an operator
$$
L=\sum_{j + |\beta| \leq m}a_{j,\beta}(x,y)(x\p_x)^j(x^{1+\alpha}\p_y)^\beta,
$$
is the set 
$$
\spec_b(L;y) =  \left\{ \lambda\in \C \, \biggr| \, \lambda \text{ is a root of the polynomial } I_\zeta(L;y)  = \sum_{j\leq m} a_{j,0}(0,y)\zeta^j   \right\} . 
$$ 
If these roots are independent of the choice of basepoint $y\in \p M$ then we say that $L$ has \emph{constant indicial roots}.

\end{definition}

In general, we shall make hypotheses of the type that we can find a line $\{\Re{\zeta}=c\}$ which does not meet any point in $\text{Spec}_b(L;y)$. Because the inverse of $I(L)_\zeta$ is automatically holomorphic in $\zeta\in \C\setminus \text{Spec}_b(L;y)$ for a fixed $y$, we reduce questions of invertibility of $I(L)$ to finding such lines avoiding the roots as $y\in \p M$ varies. Appealing to the Mellin Inversion Theorem the corresponding value of $c$ defining such a line $\{\Re{\zeta}=c\}$ will in turn define the weight parameter for our choices of Hilbert spaces on which we obtain (semi-)Fredholmness.

Before stating the main parametrix theorem, let us also discuss the mapping properties of elements in the small and large calculi. The first important result is that operators from the large calculus preserve polyhomogeneous conormality. 

\begin{prop}
If $A\in \Psi^{s,\cE}(M)$, then it maps $\cA^*_{phg}(M)$ to itself.
\end{prop}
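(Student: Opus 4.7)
The plan is to exploit the standard identity
\[ Au = (\beta_L)_*\bigl(\cK_A \cdot \beta_R^* u\bigr), \]
where $\beta_L,\beta_R:M_\alpha^2 \to M$ are the lifts of the left and right projections $M\times M\to M$ through the blow-down map $\beta_\alpha^{(2)}$. First I would verify that $\beta_R$ is a $b$-map and $\beta_L$ is a $b$-fibration. The latter is immediate: the three boundary hypersurfaces $B_{10},B_{11},B_{01}$ of $M_\alpha^2$ are respectively mapped by $\beta_L$ into $\p M$, into $\p M$, and onto all of $M$; since $M$ itself has no corners, no boundary face is sent to a corner, and $\beta_L$ manifestly fibers over the interior of each image, as required.

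Given $u\in \cA_{phg}^*(M)$, the pull-back theorem for $b$-maps stated earlier gives $\beta_R^* u\in \cA_{phg}^*(M_\alpha^2)$ with index set at each boundary hypersurface computed from $\cE_u$ by applying the lifting matrix of $\beta_R$. Multiplying by $\cK_A$ produces a distribution which is polyhomogeneous conormal at all three boundary faces, with index set obtained from the combined contributions of $\cE$ and $\beta_R^\#\cE_u$, and which retains the conormal singularity of order $s$ along $\Delta_\alpha$. To handle this singularity I would split the kernel as $\cK_A = \chi \cK_A + (1-\chi)\cK_A$, where $\chi$ is a cut-off supported in a small tubular neighborhood of $\Delta_\alpha$. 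Since $\Delta_\alpha$ meets only the front face $B_{11}$, the off-diagonal piece $(1-\chi)\cK_A\cdot \beta_R^*u$ is a purely polyhomogeneous conormal distribution on $M_\alpha^2$, and Theorem~\ref{prop:push} yields directly that its push-forward along $\beta_L$ lies in $\cA_{phg}^*(M)$.

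For the diagonal piece I would use the projective coordinates~\eqref{proj-coords} near $B_{11}$, in which $\chi \cK_A$ appears as a smooth family (parametrized by $(\widetilde x,\widetilde y)$) of classical pseudodifferential kernels in the fiber variables $(s,u)$. Such a family preserves smoothness and polyhomogeneity of the fiber input; combining this with the polyhomogeneous expansion at the front face inherited from $\cK_A$ and with the standard interior pseudodifferential mapping property away from $B_{11}$ yields a polyhomogeneous output on $M$. Finally, combining the two pieces via the extended union rule (Definition~\ref{def:extended_union}) gives a smooth index set for $Au$, which establishes $Au\in \cA_{phg}^*(M)$.

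The main technical obstacle is the integrability condition in Theorem~\ref{prop:push}: the face $B_{01}$ is the unique one among $B_{10},B_{01},B_{11}$ that is not mapped entirely into $\p M$ by $\beta_L$, so one needs strict positivity of the real parts of $E_{01}$ (after shifting by the weights coming from $\Omega_\alpha^{1/2}$ via Lemma~\ref{lemm:densities}) to make sense of the push-forward. Under the standing conventions of the large $\alpha$-calculus this shift is exactly arranged to provide the required positivity, and the same mechanism ensures that the pairing with the conormal piece along the fibers of $\beta_L$ is well defined as a distribution on $M$.
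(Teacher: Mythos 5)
Your argument is correct and follows essentially the same route as the paper: the refined version of this proposition (Theorem~\ref{thm:action_alpha}) is proved there by exactly the push-forward--pull-back computation $Au=(\beta_L)_*(\cK_A\,\beta_R^*u)$, using that $\beta_L$ is a $b$-fibration together with the pull-back and push-forward theorems for polyhomogeneous conormal distributions, with the density bookkeeping of Lemma~\ref{lemm:densities} producing the threshold $\Re(E_{01}+F)>(1+\alpha)n$ that you correctly identify as the integrability condition at $B_{01}$. Your explicit cut-off splitting of the conormal singularity along $\Delta_\alpha$ is a point the paper's proof leaves implicit, and it is the standard way to make the application of Theorem~\ref{prop:push} legitimate.
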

A more refined version of this result is given in Theorem~\ref{thm:param} in Section~\ref{sec:proofs}. In particular, if we are able to construct a parametrix in the large calculus, as the result it will map functions with polyhomogeneous expansions to functions with polyhomogeneous expansions. In this case $\ker L \subset \cA_{phg}(M)$ -- a fact which lies in the basis of the proof Theorem~\ref{thm:phg_Grushin}, which allows us to characterize the adjoints of elliptic $\alpha$-operators. 

Another important ingredient is the adapted Sobolev spaces.
\begin{definition}\label{alpha-sobolev}
The Sobolev spaces $H^s_\alpha(M,\Omega_\alpha^{1/2})$ are defined as
$$
H^s_\alpha(M,\Omega_\alpha^{1/2}) = \left\{u\in L^2 (M,\Omega^{1/2}_\alpha): L \in \Psi^s_\alpha(M) \Rightarrow Lu \in L^2(M;\Omega_\alpha^{1/2})\right\}.
$$ 
Similarly, we define the weighted Sobolev spaces
$$
x^a H^s_\alpha(M,\Omega_\alpha^{1/2}) = \left\{ x^a v : v  \in H^s_\alpha(M,\Omega_\alpha^{1/2})  \right\},
$$
where $x$ is a defining function for the boundary $\p M$.
\end{definition}

\begin{thm}
Given $a,a' \in \C$ an operator $A\in \Psi^{s,\cE}_\alpha(M)$ extends to a bounded operator:
$$
A: x^a H^t_\alpha (M,\Omega^{1/2}_\alpha)\to x^{a'} H^{t'}_\alpha (M,\Omega^{1/2}_\alpha) 
$$
if $t' \leq t-s$, $\Re (E_{01} + a) > n\alpha $, $\Re (E_{10} - a') > n\alpha $, $\Re (E_{11} - a' + a) > 0$.

Moreover, if $t' < t-s$ and $\Re E_{11} > a' - a$, then this operator is also compact.
\end{thm}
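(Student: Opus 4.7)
The plan is to reduce to an $L^2$-boundedness statement and then apply Schur's test, tracking carefully the powers of the boundary defining functions that enter from the various density bundles and from the Jacobian of the blow-down map $\beta^2_\alpha: M^2_\alpha \to M^2$.

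First, I would reduce to the case $s = 0$ and $t = t' = 0$. By Definition~\ref{alpha-sobolev} and the existence (via standard symbolic constructions in $\Psi^\bullet_\alpha(M)$) of $\alpha$-elliptic operators of any given integer order, there exist $Q \in \Psi^s_\alpha(M)$ and $R \in \Psi^{-t'}_\alpha(M)$ which are $\alpha$-elliptic and invertible modulo a smoothing operator in the small calculus. Multiplication by weights $x^a$, $x^{a'}$ commutes with the small calculus up to terms of the same class and preserves the index sets at the side faces after obvious shifts, so it suffices to prove $L^2_\alpha$-boundedness of $R A Q^{-1} \in \Psi^{0, \cE'}_\alpha(M)$ for the correspondingly shifted index family, and the boundedness claim for $A$ follows.

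Second, split the Schwartz kernel $\kappa_A$ via a cutoff into a piece supported in a small neighborhood of the lifted diagonal $\Delta_\alpha$ and a piece supported away from it. The first piece, in the interior of $M$, is a classical pseudodifferential operator of order $0$, and its $L^2$-boundedness follows from Calder\'on--Vaillancourt; the weights at the boundary and the front-face behavior are harmless since this piece is supported near $\Delta_\alpha$ and inherits the polyhomogeneous expansion at $B_{11}$. The second piece is a smooth half-density on $M^2_\alpha$ with polyhomogeneous expansions with index sets $E_{10}, E_{01}, E_{11}$ at the three boundary hypersurfaces. For this off-diagonal piece, apply Schur's test in the two asymmetric forms: estimate
\[
\sup_{p\in M}\;\int_M |\kappa_A(p,q)|\, x(p)^{-a'} x(q)^{a}\, d\mu_\alpha(q), \qquad \text{and symmetrically in } p.
\]
Working in the projective coordinates \eqref{proj-coords} near $B_{11}$, and in analogous projective charts near $B_{10}$ and $B_{01}$, one expresses each Schur integral as the integral of a polyhomogeneous function against the lift of the $\alpha$-density. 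The density bundle identity $\Omega_\alpha = \bar r^{-1-n-\alpha n}\,\Omega(M)$, combined with Lemma~\ref{lemm:densities} for the Jacobian of $\beta^2_\alpha$, produces the shifts by $n\alpha$ in the conditions $\Re(E_{01} + a) > n\alpha$ and $\Re(E_{10} - a') > n\alpha$, while the condition $\Re(E_{11} + a - a') > 0$ is precisely the integrability criterion at the front face in these projective coordinates.

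Finally, for compactness, the strict inequalities $t' < t - s$ and $\Re E_{11} > a' - a$ let one approximate $A$ in operator norm by a sequence $A_j \in \Psi^{s - \epsilon_j, \cE_j}_\alpha(M)$ whose kernels vanish to higher order at $B_{11}$ and have strictly less singular behavior at $\Delta_\alpha$ (for instance by truncating the polyhomogeneous expansion at $B_{11}$ using a Borel-type construction and regularizing the conormal singularity at $\Delta_\alpha$). Each $A_j$ is Hilbert--Schmidt between the weighted $\alpha$-Sobolev spaces by a direct integrability computation (the same Schur-type integral with the two sides squared is now convergent under the strict inequalities), hence compact, and $A_j \to A$ in the operator norm of the bounded maps yields compactness of $A$. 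The main obstacle I expect is carefully tracking the powers of boundary defining functions through the three different coordinate systems near $B_{10}$, $B_{01}$, $B_{11}$ and verifying that the Schur bounds are uniform up to each face; this is the content of the conditions on $\Re E_{10}$, $\Re E_{01}$, $\Re E_{11}$, with the $n\alpha$ shift encoding the singular nature of the $\alpha$-density.
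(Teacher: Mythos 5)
Your boundedness argument follows essentially the same route as the paper's: conjugate by $x^{a}$, $x^{a'}$ and compose with invertible elliptic elements of the small calculus to reduce to an $L^2_\alpha\to L^2_\alpha$ statement, then treat the residual (off-diagonal / polyhomogeneous) part by Schur's test, with the three conditions on $\Re E_{10}$, $\Re E_{01}$, $\Re E_{11}$ arising exactly as the integrability conditions for the weighted push-forwards $(\beta_{L,R})_*(|\cK_{A}|\mu_\alpha)$ against the singular density $\mu_\alpha=(\rho_{01}\rho_{10}\rho_{11})^{-(1+\alpha)n}\mu_b$. One caveat on your near-diagonal piece: it is not just a classical interior operator, since its kernel lives on $M^2_\alpha$ down to the front face; what is actually needed is uniform $L^2$-boundedness of $\Psi^0_\alpha(M)$ up to $B_{11}$ (e.g.\ via the normal operator family or a H\"ormander square-root argument), and Calder\'on--Vaillancourt in the interior alone does not supply that uniformity. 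The paper elides this point too, so it is a shared gloss rather than a defect specific to your write-up.

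Where you genuinely diverge is compactness. The paper deduces it from the compactness of embeddings of weighted $\alpha$-Sobolev spaces (the operator gains both regularity and front-face decay, hence factors through a compact inclusion), whereas you approximate $A$ in operator norm by Hilbert--Schmidt operators. The strategy is viable, but the justification you give for the Hilbert--Schmidt property is incorrect as stated: ``the same Schur-type integral with the two sides squared'' is \emph{not} convergent under the stated hypotheses, because the strict improvements $t'<t-s$ and $\Re E_{11}>a'-a$ affect only the diagonal order and the front face, while square-integrability of the kernel at the side faces $B_{10}$, $B_{01}$ is a different (and generally stronger) condition than the Schur conditions $\Re(E_{01}+a)>n\alpha$, $\Re(E_{10}-a')>n\alpha$. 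The fix is to note that all three face conditions in the boundedness hypotheses are already strict, so cutting the kernel off at distance $\epsilon$ from \emph{all} boundary faces and mollifying the (strictly negative-order, thanks to $t'<t-s$) conormal singularity produces honest smoothing operators with kernels supported in the interior, which are Hilbert--Schmidt for trivial reasons, and the errors tend to zero in operator norm by the same Schur estimates. With that repair your compactness argument goes through, but as written the Hilbert--Schmidt step does not.
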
 

In order to construct a parametrix the following assumption is usually made
\begin{ass}
The indicial polynomial does not depend on the boundary, i.e. the indicial roots are constant.
\end{ass}

Finally we come back to the Fourier transforms $\widehat{N(L)}$. A change of variables transforms $\widehat{N(L)}$ into an operator of ``Bessel-type" (see the discussion in $\S$ \ref{subsec:param}) is known \cite{mazzeo-uniquecont} that the resulting 1D operators enjoy unique continuation properties, and hence they are: injective as operators from $s^\delta L^2(\R_+)$ to itself for $\delta$ sufficiently big and surjective on the same space for $\delta$ sufficiently small. Thus to each $(y,\hat\eta) \in S^*(\p M)$ we can associate the thresholds $\overline{\delta}(y,\hat\eta)$ and $\underline{\delta}(y,\hat\eta)$
\begin{align*}
\overline{\delta} &= \max_{(y,\hat\eta) \in S^*(\p M)}\overline{\delta}(y,\hat\eta),\\
\underline{\delta} &= \min_{(y,\hat\eta) \in S^*(\p M)}\underline{\delta}(y,\hat\eta).
\end{align*}

We are now ready to state our main consequence of the $\alpha$-calculus, namely the existence of a parametrix for $\alpha$-elliptic differential operators.

\begin{thm}
\label{thm:simpleparametrix}
Suppose that $L \in \emph{Diff}_\alpha^m(M)$ is an elliptic operator with constant indicial roots $\{\lambda_i\}_{i=1}^m$ and define $\overline \delta, \underline \delta \in \R$ for $L$ as above.  

Fix $\delta$ such that the indicial roots do not lie on the lines
$$
\left\{z\in \C : \Re(z) = \delta - \frac{1}{2} \right\}.
$$
If $\delta > \overline{\delta}$, then 
$$
L: x^\delta H^{r+m}_\alpha(M) \to x^\delta H^{r}_\alpha(M)
$$ 
is left semi-Fredholm. Similarly if $\delta < \underline{\delta}$, then $L$ is right semi-Fredholm. Further, in either case, $L$ has closed range. 
\end{thm}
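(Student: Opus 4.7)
The plan is to prove the theorem by constructing a one-sided parametrix in the large $\alpha$-calculus with compact remainder, following the template of Mazzeo's construction in the $0$-calculus. The proof proceeds in three stages (interior, front face, side faces), and then semi-Fredholmness is extracted by abstract functional analysis from the existence of the parametrix.

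First, I use $\alpha$-ellipticity to build an interior parametrix. Since $^\alpha\sigma_m(L)$ is invertible on $^\alpha T^*M\setminus\{0\}$, the standard symbolic construction in the small calculus produces $Q_0\in\Psi^{-m}_\alpha(M)$ with $LQ_0=\mathrm{Id}-R_0$ and $Q_0L=\mathrm{Id}-R_0'$, where $R_0,R_0'\in\Psi^{-\infty}_\alpha(M)$ have Schwartz kernels vanishing to infinite order at the side faces $B_{10},B_{01}$ and with only a Taylor expansion at the front face $B_{11}$. These remainders are bounded but not compact on weighted Sobolev spaces, so improvement is required at the front face.

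Next, I improve at $B_{11}$ by inverting the normal operator. By translation invariance in $u$, the Fourier transform reduces $N(L)_y$ to the Bessel-type family $\widehat{N(L)}_{(y,\hat\eta)}$ on $S^*\p M$. The constant-indicial-roots hypothesis means the Mellin transform conjugates the scale-invariant part of each such operator to multiplication by the polynomial $I_\zeta(L)$, and the assumption that $\{\Re\zeta=\delta-\tfrac12\}$ avoids the indicial roots makes $I_\zeta^{-1}$ holomorphic and bounded along this line. Combined with the $s^\delta L^2(\R_+)$ unique-continuation results for Bessel-type operators quoted just before the statement, this yields a uniform left inverse of $\widehat{N(L)}_{(y,\hat\eta)}$ when $\delta>\overline\delta$, and a uniform right inverse when $\delta<\underline\delta$. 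Inverse-Fourier-transforming in $\hat\eta$ and gluing over $\p M$ by a partition of unity produces a normal-face model whose Schwartz kernel has polyhomogeneous asymptotics dictated by the indicial roots $\{\lambda_i\}$.

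Using the Borel lemma (Lemma~\ref{lem:borellemma}), I lift this normal-face model to an operator $Q_1\in\Psi^{-m,\cE}_\alpha(M)$ with index family $\cE=(E_{10},E_{01},\N_0)$ whose side-face index sets are built from shifts of the $\lambda_i$ by $\N_0+(1+\alpha)\N_0$. Setting $Q=Q_0+\chi Q_1$ for a cutoff $\chi$ localized near $B_{11}$, the triple-space composition theorem for the large $\alpha$-calculus shows that the leading front-face singularity of the interior remainder is cancelled, so $QL=\mathrm{Id}-R$ (when $\delta>\overline\delta$) or $LQ=\mathrm{Id}-R'$ (when $\delta<\underline\delta$) with $R,R'\in\Psi^{-\infty,\cE'}_\alpha(M)$. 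Choosing the weights so that $\Re E'_{10},\Re E'_{01},\Re E'_{11}$ are strictly positive relative to $\delta$, the mapping-and-compactness theorem stated earlier forces $R,R'$ to be compact on $x^\delta H^r_\alpha(M)$. The standard fact that a bounded operator with a one-sided inverse modulo a compact operator has closed range and finite-dimensional kernel (resp.\ cokernel) then yields left (resp.\ right) semi-Fredholmness.

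The main obstacle is the bookkeeping in the last stage: verifying that the index sets produced by inverting the normal operator, after composition with $L$ via pullback along the two stretched projections and pushforward along the b-fibration $M^3_\alpha\to M^2_\alpha$, really yield remainder index families whose real parts strictly exceed the thresholds required by the compactness criterion. This is delicate because it couples the position of the Mellin contour $\{\Re\zeta=\delta-\tfrac12\}$ relative to $\mathrm{Spec}_b(L)$ with the extended-union operation of Definition~\ref{def:extended_union} governing pushforward of index families. The constant-indicial-root hypothesis is essential here, since otherwise $I_\zeta(L;y)^{-1}$ would depend on $y$ and the Mellin inversion on $\p M$ would not produce a clean element of the large calculus.
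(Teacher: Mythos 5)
Your proposal follows the same overall machinery as the paper (interior symbolic parametrix in the small calculus, then inversion of the normal family via Fourier transform and the rescaling to Bessel-type model operators, then compactness of the remainder from the mapping theorem), but it diverges in how much of the construction is carried out. The paper does not stop after the single front-face correction: it proves the stronger Theorem~\ref{thm:param}, adding a side-face correction $Q_2$ built from the meromorphic inverse of the indicial family, asymptotically summing a Neumann series to drive the remainder to infinite order at the front face and one side face, and finally identifying the remainders with the projectors $\pi_{\ker}$, $\pi_{\coker}$; the semi-Fredholm statement is then read off as a corollary. It also handles the one-sided regimes by reducing to the invertible case via $\mathfrak{P}=PP^*$, whereas you work directly with one-sided (generalized) inverses of $\widehat{N(L)}_{(y,\hat\eta)}$ — both are legitimate, and yours is closer to Mazzeo's original treatment of the semi-Fredholm case. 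Your shortcut of declaring $R=\mathrm{Id}-QL$ compact after just the $Q_1$ step can work, but it is exactly the point you flag as ``bookkeeping'' that carries the weight: the side-face index sets of $R$ are $\Sigma^{\pm}(\delta)$-type sets inherited from the expansion of $N(L)^{-1}$, and the strict inequalities $\Re(E_{01}+\delta)>(1+\alpha)n$, $\Re(E_{10}-\delta)>(1+\alpha)n$ in Theorem~\ref{thm:mapprops} hold only because the hypothesis that no indicial root lies on $\{\Re z=\delta-\tfrac12\}$ gives a positive gap between the roots and the critical line; you should make that verification explicit rather than deferring it, since without it the remainder is merely bounded, not compact. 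With that check supplied, your argument buys a shorter route to semi-Fredholmness at the cost of not producing the refined kernel asymptotics of $G$, $\pi_{\ker}$, $\pi_{\coker}$ that the paper needs later (e.g.\ for Theorem~\ref{thm:phg_Grushin} and the description of the maximal domain).
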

This result is in fact a consequence of a more extensive Theorem \ref{thm:param} which constructs a generalized inverse for $L$ and gives precise asymptotics for its Schwartz kernel near the singular submanifold $\cZ$.

\section{Natural domains for the Laplacian on $\alpha$-Grushin manifolds}

\label{sec:natural_domains}

Our goal is to use the results of the previous section to find the domain of the closure and of the adjoint of the operators $\Delta - cS$. All of the essential information needed for studying self-adjointness and self-adjoint extensions is encoded in the description of the mentioned domains close to the singular set. Indeed, in an open neighborhood around $q\in M\setminus\cZ$ the operator $\Delta - cS$ is elliptic and smooth and hence morally both its closure and adjoint locally should be standard Sobolev space. The difficulty is understanding the behaviour of elements of the kernel close to the singular set. This is done using a parametrix construction from Theorem~\ref{thm:param} that is used in its simplified version given by Theorem~\ref{thm:simpleparametrix}. Note that in order to prove all of the results of this section we can use this theorem as a blackbox.

Let us explicitly write down operators $\Delta - c S$. In the tubular neighborhood of $\cZ$ we introduce a companion metric 
$$
\tilde{g} = dx^2 + g_{(x,\cZ)}, 
$$
which simply removes the singularity. The metric is defined only locally, and we will use it to study local behavior of $P_{\alpha,c}$. Let $\tilde \omega$ be the associated volume form on $M$ and let $\Delta_{x,\cZ}$ be the Laplace-Beltrami operator on the level sets $x=\const$ with respect to $g_{(x,\cZ)}$.

If in the tubular neighborhood of $\cZ$ we have chosen a basis of orthonormal vector fields $X_0,\dots, X_{n}$ and $\omega$ the Riemannian volume associated to the original metric $g$. We can write the Laplace-Beltrami operator $\Delta$ locally as
$$
\Delta= \sum_{i=0}^{n} X_i^2 + \dive_\omega (X_i) X_i.
$$
Let us choose $X_{0} = \p_x$ and the remaining $X_i=x^\alpha Y_i$, where $Y_i$ is a family of vector fields tangent to the level sets of the distance function $x$. Note that this implies that $Y_i$ applied to the function $x$ is zero.

To calculate these divergence terms we notice that by given definitions the volume form is
$$
\omega = \frac{1}{|x|^{\alpha n}}\tilde{\omega}= \frac{1}{|x|^{\alpha n}}\omega_{(x,S)}\wedge dx,
$$
where $\tilde{\omega}$ is the volume form of the companion metric $\tilde g$ and $\omega_{(x,S)}$ are volume forms on the level sets $x=\const$, which can be identified with a smooth family of $n$-forms on $S$. We have by definition of divergence
$$
\dive_\omega (\p_x) \omega= \cL_{\p_x}(|x|^{-\alpha n} \tilde \omega) = -\alpha n |x|^{-\alpha n-1} \sign(x)\tilde \omega + \frac{\cL_{\p_x}\tilde\omega}{|x|^{\alpha n}}= \left(\dive_{\tilde{\omega}} (\p_x) - \frac{\alpha n}{x} \right) \omega.
$$ 
For us the explicit form of $\dive_{\tilde{\omega}} (\p_x)$ is not important. We will only need to know that it is smooth. For the remaining fields we find using Cartan's magic formula
\begin{align*}
\dive_\omega (x^\alpha Y_j) \omega &= \cL_{x^\alpha Y_j}(|x|^{-\alpha n} \omega_{(x,S)}\wedge dx)= |x|^{-\alpha n} d(|x|^\alpha (Y_j \lrcorner \; \omega_{(x,S)})\wedge dx) = \\
&= |x|^\alpha \dive_{\omega_{(x,S)}}(Y_j) \omega.
\end{align*}

Combining everything we get 
$$
\Delta= \p_x^2 + \left(\dive_{\tilde{\omega}} (\p_x) - \frac{\alpha n}{x} \right) \p_x+ |x|^{2\alpha}\sum_{i=1}^n\left(Y_i^2 + \dive_{\omega_{(x,\cZ)}} (Y_i) Y_i \right) 
$$
or in a compact form
\begin{equation}
    \label{eq:local_expression_laplace}
    \Delta= 
\p_x^2 +|x|^{2\alpha} \Delta_{x,\cZ} + \left(\dive_{\tilde{\omega}} (\p_x) - \frac{\alpha n}{x} \right) \p_x,
\end{equation}
where $\Delta_{x,\cZ}$ are the Laplacians on the level sets of the distance function $x$.

From now on we focus only on the behavior from right and remove the modulus sign in $|x|$. Left side is handled in the same manner.

\subsection{Closure of $\Delta - cS$}

\label{subsec:closure}

We can find the closure of $\Delta - cS$ using the following result proved in~\cite[Proposition 4.6]{mendoza}
\begin{prop}
\label{prop:mendoza}
Let $H_1, H_2$ be Hilbert spaces, and let $D$ be a Banach subspace of $H_1$ equipped with a norm $\|\cdot\|_D$ such that the inclusion map $(D,\|\cdot\|_D)\to (H_1,\|\cdot\|_{H_1})$ is continuous in $H_1$. Let $A:D \to H_2$ be a continuous operator and assume that:
\begin{enumerate}
\item The range of $A$ is closed.
\item The kernel $\ker A \subset H_1$ is closed with respect to $\|\cdot\|_{H_1}$.
\end{enumerate}
Then the operator $A$ with domain $D$ is closed, i.e., $D$ is complete with respect to the graph norm $\|u\|_{A} = \|u\|_{H_1} + \|Au\|_{H_2}$.
\end{prop}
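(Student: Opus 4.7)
The plan is to take a sequence $\{u_n\}\subset D$ that is Cauchy in the graph norm $\|\cdot\|_A$, and to show that it converges (in $\|\cdot\|_A$) to some element $u\in D$. Since the inclusion $D\hookrightarrow H_1$ is continuous and $A:D\to H_2$ is continuous from $\|\cdot\|_D$ to $\|\cdot\|_{H_2}$, the sequence is Cauchy in $H_1$ and $\{Au_n\}$ is Cauchy in $H_2$. By completeness of the Hilbert spaces there exist $u\in H_1$ and $v\in H_2$ with $u_n\to u$ in $H_1$ and $Au_n\to v$ in $H_2$. The goal reduces to showing $u\in D$ and $Au=v$, because then $\|u_n-u\|_A\to 0$ automatically.

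The first step is to capture $v$ by an element of $D$. By hypothesis (1) the range of $A$ is closed in $H_2$, so $v\in \operatorname{Range}(A)$ and we may choose $w\in D$ with $Aw=v$. Then $A(u_n-w)\to 0$ in $H_2$, and the task becomes controlling $u_n-w$ in $D$ modulo the kernel. Here I would invoke the open mapping theorem in the following form: since $\ker A\subset H_1$ is closed by hypothesis (2) and $D\hookrightarrow H_1$ is continuous, $\ker A$ is also closed in $(D,\|\cdot\|_D)$; hence the quotient $D/\ker A$ is a Banach space, and the induced map
\[
\widetilde A:D/\ker A\longrightarrow \operatorname{Range}(A)
\]
is a continuous bijection between Banach spaces, thus a topological isomorphism. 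Applying $\widetilde A^{-1}$ to $A(u_n-w)\to 0$ yields $[u_n-w]\to 0$ in $D/\ker A$, so there exist $k_n\in\ker A$ with $\|u_n-w-k_n\|_D\to 0$, and hence also $u_n-w-k_n\to 0$ in $H_1$ by continuity of the inclusion.

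Combining this with $u_n\to u$ in $H_1$ gives $k_n\to u-w$ in $H_1$. Since $\ker A$ is closed in $H_1$, the limit $u-w$ lies in $\ker A$; in particular $u-w\in D$, so $u=w+(u-w)\in D$, and $Au=Aw+A(u-w)=v+0=v$. Thus $\|u_n-u\|_A=\|u_n-u\|_{H_1}+\|Au_n-Au\|_{H_2}\to 0$, proving completeness of $(D,\|\cdot\|_A)$.

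The only delicate point is setting up the open mapping theorem correctly: one must verify that $\ker A$ is closed in the stronger norm $\|\cdot\|_D$ (and not merely in $H_1$), and that $\operatorname{Range}(A)$ inherits a Banach structure from the closed-range hypothesis. Both follow from the two given conditions together with the continuity of the inclusion $D\hookrightarrow H_1$, so once these are checked the argument above proceeds in a straightforward manner.
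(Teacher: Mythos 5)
Your proof is correct. The paper does not actually prove this proposition (it only cites it as Proposition 4.6 of the reference), but your argument is the standard one for this statement, and every step is sound: closedness of the range lets you write $v=Aw$; the open mapping theorem applied to the induced bijection $D/\ker A\to\operatorname{Range}(A)$ (both Banach, since $\ker A$ is automatically $\|\cdot\|_D$-closed as $A^{-1}(0)$ for $A$ continuous, and $\operatorname{Range}(A)$ is closed in $H_2$) produces the $k_n\in\ker A$; and the $H_1$-closedness of $\ker A$ is exactly what is needed to conclude $u-w\in\ker A\subset D$. One trivial remark: that a graph-norm Cauchy sequence is Cauchy in $H_1$ with $\{Au_n\}$ Cauchy in $H_2$ is immediate from the definition of $\|\cdot\|_A$; the continuity of the inclusion and of $A$ plays no role in that first step.
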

In particular, we will take $H_1=H_2=L^2(M,\omega)$ and $D$ to be a certain weighted Sobolev space, so that the operator $\Delta - cS: D \subset L^2(M,\omega) \to L^2(M,\omega)$ will be left semi-Fredholm. In this case the conditions of the Proposition~\ref{prop:mendoza} are satisfied.

We can rewrite $\Delta - cS$ as a weighted operator $Q:=x^2(\Delta - cS)$:
\begin{equation}
\label{eq:general_op}
\Delta - cS = x^{-2} Q: = \frac{1}{x^2}\left(x^2 \p_x^2 + x^{2(1+\alpha)}\Delta_{x,\cZ} + \left(x \dive_{\tilde{\omega}} (\p_x) - \alpha n \right)x\p_x  - cx^2 S\right).
\end{equation}
If we look at the closure of $\Delta - cS$ as an operator from $L^2(M,\omega)$ to itself, we can equivalently look at the closure of
$$
Q: L^2(M,\omega) \to x^2 L^2(M,\omega). 
$$
We have 
$$
\omega = x^{1+n} \omega_\alpha
$$
where $\omega_\alpha \in \Omega_\alpha$ is a compatible density. Consequently
$$
L^2(M,\omega ) = L^2(M,x^{1+n}\omega_\alpha ) \simeq x^{-\frac{1+n}{2}}L^2(M,\omega_\alpha).
$$

We see that $Q\in \text{Diff}^2_\alpha(M)$ and thus continuous as an operator 
$$
Q:x^{2-\frac{1+n}{2}}H^2_\alpha(M,\omega_\alpha)  \to x^{2-\frac{1+n}{2}}L^2(M,\omega_\alpha). 
$$
If $Q$ is also left semi-Fredholm, then the operator $\Delta - cS = x^{-2}Q$ will be also left semi-Fredholm on the same space. Note that by definition we have that $x^{2-\frac{1+n}{2}}H^2_\alpha(M,\omega_\alpha)\simeq x^2 H_\alpha^2(M,\omega)$
 is continuously embedded into $x^{-\frac{1+n}{2}}L^2(M,\omega_\alpha)\simeq L^2(M,\omega)$ and contains $C^\infty_c(M)$ functions as a dense set. Hence proving that $Q$ is left semi-Fredholm would imply that the closure of $\Delta - cS$ on the core $C^\infty_c(M)$ is given by $x^{2-\frac{1+n}{2}}H^2_\alpha(M,\omega_\alpha)$. The rest of this subsection is dedicated to the proof of this fact (this is the content in part of Theorem \ref{thm:closure}).

In order to apply Theorem~\ref{thm:simpleparametrix} we need to check that the operators $\widehat{N(Q)}$ are injective on $x^{\delta}L^2(\R_{\geq 0},dx)$ with
$$
\delta = 2- \frac{1+n}{2}+\frac{1+\alpha n + n}{2} = 2 + \frac{\alpha n}{2}.
$$
We can write down $\widehat{N(Q)}$ explicitly using asymptotics of the scalar curvature~\eqref{eq:scalar_ass}. They are given by
\begin{equation}
\label{eq:fourier_transformed_normal}
\widehat{N(Q)} = x^2 \p_x^2 -\alpha n\, x\p_x + c\alpha  n(\alpha n + \alpha + 2) + \sigma_\cZ(y,\hat{\eta})x^{2(1+\alpha)},
\end{equation}
where $\sigma_\cZ<0$ is the symbol map of $\Delta_{0,\cZ}$.

Before studying this operator let us consider a more general operator of the form
\begin{equation}
\label{eq:not_bessel}
T := x^2 \p_x^2  + ax\p_x  + b-h x^{2\beta} ,
\end{equation}
with $\beta>0$, $h>0$ and $a,b\in \R$ constants. Define the following quantities
$$
\mu = (a-1)^2-4b
$$
and
$$
\nu =\frac{\sqrt{|\mu|}}{2\beta}.
$$
We will need the following lemma. 
\begin{lem}
Let $I(x,p)$ and $K(x,p)$ be modified Bessel functions of  order $p \geq 0$. For imaginary orders define 
$$
\tilde I(x,p)=\Re I(x,ip), \qquad \tilde K(x,p)=K(x,ip). 
$$
Then equation
\begin{equation}
Tu = 0,
\end{equation}
has two independent solutions of the form
\begin{equation}
\label{eq:kernel_solutions}
u_1 = x^{\frac{1-a}{2}} I\left(\frac{\sqrt{h}x^\beta}{\beta},\nu\right), \qquad u_2 = x^{\frac{1-a}{2}} K\left(\frac{\sqrt{h}x^\beta}{\beta},\nu\right),
\end{equation}
when $\mu \geq 0$. When $\mu<0$, replace $I,K$ in the formulas with $\tilde{I},\tilde{K}$.
\end{lem}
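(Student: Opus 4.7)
The plan is to reduce $Tu=0$ to the modified Bessel equation by two standard changes of variables: first a conjugation $u = x^p v$ with $p=(1-a)/2$ chosen to eliminate the first-order drift, then the dilation-type substitution $t = \tfrac{\sqrt h}{\beta}x^\beta$ to normalize the quadratic perturbation.

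Concretely, with $p=(1-a)/2$, a direct computation using
$$x^2\partial_x^2 = (x\partial_x)^2 - x\partial_x, \qquad (x\partial_x)(x^p v) = x^p (p + x\partial_x)v,$$
gives $2p + a = 1$, so the first-order terms collapse, and the constant term becomes
$$p(p-1)+ap+b = p^2+(a-1)p+b = -\tfrac{(a-1)^2}{4}+b = -\tfrac{\mu}{4}.$$
Dividing through by $x^p$ then yields the transformed equation
\begin{equation*}
x^2 v'' + x v' - \tfrac{\mu}{4} v - h\,x^{2\beta} v = 0.
\end{equation*}
Next I introduce $t=\tfrac{\sqrt h}{\beta}x^\beta$. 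Using $x\partial_x = \beta\,t\partial_t$ (and hence $x^2\partial_x^2+x\partial_x = \beta^2(t^2\partial_t^2+t\partial_t)$) plus $h\,x^{2\beta} = \beta^2 t^2$, the equation becomes
\begin{equation*}
t^2 v'' + t v' - \Bigl(t^2 + \tfrac{\mu}{4\beta^2}\Bigr) v = 0,
\end{equation*}
which is precisely the modified Bessel equation of order $\sqrt{\mu/(4\beta^2)}$.

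If $\mu\ge 0$, the order is $\nu=\sqrt{\mu}/(2\beta)\in\R_{\ge 0}$, and the two linearly independent solutions are $I(t,\nu)$ and $K(t,\nu)$; reverting through $t$ and $u=x^p v$ gives the stated $u_1,u_2$. If $\mu<0$, the order is purely imaginary, $i\nu$ with $\nu=\sqrt{|\mu|}/(2\beta)$, and $I(t,i\nu)$, $K(t,i\nu)$ are still linearly independent solutions; taking $\tilde I(t,\nu)=\Re I(t,i\nu)$ and $\tilde K(t,\nu)=K(t,i\nu)$ (the latter being real for real $t>0$ under the standard normalization) produces a real fundamental system with the desired form. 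Linear independence of the transformed pair transfers back to $(u_1,u_2)$ because the substitutions $u\mapsto x^{-p}u$ and $x\mapsto(\beta t/\sqrt h)^{1/\beta}$ are invertible on $x>0$.

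The only mildly delicate point is the imaginary-order case: one must verify that $\tilde I$ and $\tilde K$ as defined remain linearly independent and real, which follows from the standard integral representations of $I_p$ and $K_p$ (or equivalently from the fact that $K_{i\nu}$ and $\Re I_{i\nu}$ generate the real two-dimensional solution space of the modified Bessel equation with purely imaginary index; see, e.g., the discussion of Bessel functions of imaginary order in standard references). No further work is needed beyond this verification, as the reduction itself is a routine computation.
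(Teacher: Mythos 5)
Your proof is correct and follows essentially the same route as the paper, which performs the combined substitution $y=\tfrac{\sqrt h}{\beta}x^\beta$, $v=y^{(a-1)/2\beta}u$ in a single step; you simply split it into the conjugation by $x^{(1-a)/2}$ followed by the dilation $t=\tfrac{\sqrt h}{\beta}x^\beta$ and carry out the computation explicitly. The extra remark on the real fundamental system $\{\Re I_{i\nu}, K_{i\nu}\}$ in the imaginary-order case is a welcome detail the paper leaves implicit.
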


\begin{proof}
The proof is just a change of variables of the form
$$
y = \frac{\sqrt{h}}{\beta}x^\beta, \qquad v(y) = y^{(a-1)/2\beta} u(y),
$$
which reduces~\eqref{eq:not_bessel} to a modified Bessel equation.
\end{proof}

We need to understand when $T$ is injective as an operator from $x^\delta L^2(\R_{\geq 0},dx)$ to itself for some real parameter $\delta$. We can reduce this question to $\delta = 0$ by considering the conjugation
$$
T \mapsto x^{-\delta }T x^\delta.
$$
In this case $T$ is transformed to an operator of the same form where one has to replace
\begin{align*}
a &\mapsto 2\delta  + a,\\
b &\mapsto \delta^2 + \delta(a-1) + b.
\end{align*}
After this transformation the elements of the kernel are the same functions~\eqref{eq:kernel_solutions} multiplied by $x^{-\delta}$.

\begin{prop}
\label{prop:injectivity_of_T}
The operator ~\eqref{eq:not_bessel} mapping
$$
T: x^\delta L^2(\R_{\geq 0},dx) \to x^\delta L^2(\R_{\geq 0},dx)
$$
has a non-trivial kernel if and only if
$$
\Re\left(\frac{1-a}{2}-\delta -\frac{\sqrt{\mu}}{2}\right) > -\frac{1}{2}.
$$
\end{prop}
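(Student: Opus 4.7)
The strategy is to exploit the explicit basis of solutions to $Tu=0$ provided by the preceding lemma and to determine, by an endpoint analysis at $x\to 0^+$ and $x\to +\infty$, exactly which linear combinations $u=Au_1+Bu_2$ belong to $x^\delta L^2(\R_{\geq 0},dx)$. Since the kernel of a second-order ODE on the half-line is at most two-dimensional, this reduces the proposition to a pair of integrability computations.

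First I would treat the behavior at infinity. Regardless of whether $\mu\geq 0$ or $\mu<0$ (so that one uses $I,K$ or $\tilde I,\tilde K$), the modified Bessel functions have universal large-argument asymptotics $I(z,\nu)\sim e^{z}/\sqrt{2\pi z}$ and $K(z,\nu)\sim \sqrt{\pi/(2z)}\,e^{-z}$ (the imaginary-order variants $\tilde I,\tilde K$ obeying the same bounds up to a bounded oscillatory factor). Substituting $z=\sqrt{h}x^\beta/\beta$ with $h>0$ and $\beta>0$ shows that $u_1$ grows super-polynomially and $u_2$ decays super-polynomially as $x\to\infty$. Consequently $Au_1+Bu_2\in x^\delta L^2$ near infinity forces $A=0$, and the contribution of $u_2$ at infinity is automatically integrable for any $\delta\in\R$. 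So any candidate kernel element must be a scalar multiple of $u_2$.

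The heart of the argument is the endpoint analysis at $x=0$, where one combines the small-argument Bessel asymptotics with the three regimes for $\mu$. When $\mu>0$, $K(z,\nu)\sim \tfrac{\Gamma(\nu)}{2}(z/2)^{-\nu}$ gives
\[
u_2(x)\ \sim\ C\,x^{(1-a)/2-\beta\nu}\ =\ C\,x^{(1-a-\sqrt{\mu})/2},\qquad x\to 0^+.
\]
When $\mu=0$, the logarithmic correction $K(z,0)\sim -\log(z/2)$ produces $u_2\sim C\,x^{(1-a)/2}\log x$; and when $\mu<0$, $\sqrt{\mu}=i\sqrt{|\mu|}$ is purely imaginary and $\tilde K(z,\nu)$ oscillates boundedly in $\log z$, so $|u_2|\sim C\,x^{(1-a)/2}$. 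In all three regimes $|u_2(x)|^2$ is comparable to $x^{2\Re(\lambda_-)}$ up to at worst a polylogarithmic factor, where $\lambda_-=(1-a-\sqrt{\mu})/2$ is the smaller indicial root. Testing $u_2\in x^\delta L^2$ near zero via the integral $\int_0^1 x^{-2\delta}|u_2(x)|^2\,dx$ then gives the sharp threshold
\[
2\Re(\lambda_--\delta)>-1\ \Longleftrightarrow\ \Re\!\left(\tfrac{1-a}{2}-\delta-\tfrac{\sqrt{\mu}}{2}\right)>-\tfrac12,
\]
where the strict inequality handles the log factor in the $\mu=0$ case and the bounded oscillation in the $\mu<0$ case uniformly.

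Combining the two endpoint analyses yields the claim: the kernel of $T$ on $x^\delta L^2(\R_{\geq 0},dx)$ is nontrivial if and only if the stated inequality holds, in which case it is one-dimensional and spanned by $u_2$. The only subtle point is verifying that the $\mu<0$ and $\mu=0$ regimes genuinely give the same sharp threshold as the generic $\mu>0$ case; this is a matter of taking real parts consistently and checking that the bounded oscillatory/logarithmic prefactors do not shift the $L^2$ exponent—the main bookkeeping obstacle but not a conceptual one.
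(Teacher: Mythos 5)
Your proposal is correct and follows essentially the same route as the paper: use the explicit Bessel-function basis, rule out the $I$-solution by its exponential growth at infinity, and read off the sharp $L^2$ threshold at $x=0$ from the small-argument asymptotics of the $K$-solution (splitting into real/zero/imaginary order, i.e.\ the sign of $\mu$). The only cosmetic difference is that the paper first conjugates by $x^\delta$ to reduce to $\delta=0$, whereas you carry the weight $\delta$ through the integrability computation directly.
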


\begin{remark}
One can give a different interpretation of Proposition 3.2. Consider the operator $T$ and write down its indicial polynomial
\begin{equation}
\label{eq:indicial_pol}
p(\lambda) = \lambda^2 +(a-1)\lambda + b.
\end{equation}
Then we can reformulate Proposition~\ref{prop:injectivity_of_T} as follows. Operator $T$ is injective on $x^\delta L^2$ if and only if
$$
\Re \lambda_- - \delta \leq  - 1/2,
$$
where $\lambda_-$ is the root of the indicial polynomial with the smaller real part. 
Indeed, the exponential decay of the solutions at infinity imply that the injectivity questions are completely determined by the restriction of the operator to a neighborhood of zero. But in this case we can essentially use Mellin transform to solve approximately the equation $Tu=0$. 

\end{remark}

\begin{proof}
As already mentioned without any loss of generality we can assume that $\delta = 0$. We have the following well-known asymptotics for modified Bessel functions for $\nu \geq 0$~\cite{Bessel}: 
\begin{align*}
I(x,\nu) &\sim \frac{1}{\sqrt{2\pi}}x^{-1/2}e^{x},\\
K(x,\nu) &\sim \frac{1}{\sqrt{2\pi}}x^{-1/2}e^{-x}
\end{align*}
as $x\to +\infty$ and
\begin{align*}
I(x,\nu) \sim \begin{cases}
\dfrac{(x/2)^{|\nu|}}{|\nu|!}, & \nu\in \Z\setminus\{0\},\\
\dfrac{(x/2)^\nu}{\Gamma(\nu+1)}, & \nu\notin \Z\setminus\{0\},
\end{cases}\\
K(x,\nu) \sim \begin{cases}
\dfrac{\Gamma(\nu)}{2}\left(\dfrac{2}{x}\right)^\nu, & \nu > 0,\\
-\ln (x/2) + \gamma , & \nu = 0,
\end{cases}
\end{align*}
where $\gamma$ is the Euler constant and $x\to 0+$. For the imaginary order the asymptotics at infinity are the same, while the asymptotics as $x\to 0+$ are given by
\begin{align*}
\tilde{I}(x,\nu)&\sim \left(\frac{\sinh (\pi\nu)}{\pi\nu} \right)^{1/2} \cos\left(\nu \ln \left(\frac{x}{2}\right)-\gamma_\nu \right),\\
\tilde{K}(x,\nu)&\sim -\left(\frac{\pi\nu}{\sinh (\pi\nu)} \right)^{1/2} \sin\left(\nu \ln \left(\frac{x}{2}\right)-\gamma_\nu \right),
\end{align*}
where $\gamma_\nu$ is a real continuous function of the order $\nu$.

From the asymptotics it follows immediately that the $I$-solution of~\eqref{eq:not_bessel} is never in $L^2$ for both real and imaginary orders because of the exponential growth. So the kernel of this operator can be only a multiple of the $K$-solution. Since the $K$-solution decreases exponentially at infinity, we only need to understand when it is square integrable close to zero. From the given asymptotics it follows that this is the case, if
$$
\frac{1-a}{2}-\beta\sqrt{\frac{(a-1)^2}{4\beta^2}-\frac{b}{\beta^2}} > -\frac{1}{2},
$$ 
when the order of the modified Bessel functions is real and 
$$
\frac{1-a}{2}> -\frac{1}{2},
$$
when the order is imaginary.

\end{proof}

\begin{remark}
\label{rmrk:on_the_critical_value}
Note first that there is a unique critical value of $\delta$, such that for all $\delta$ bigger than this value the operator $T$ has no kernel in $x^\delta L^2$. Secondly, this value does not depend on the constant $h$ in~\eqref{eq:not_bessel}. 
\end{remark}

We can now analyse the operators $\widehat{N(Q)}$. First note that as discussed in Remark~\ref{rmrk:on_the_critical_value} $\widehat{N(Q)}_{(y,\hat{\eta})}$ will be injective on $x^\delta L^2$ for all $(y,\hat{\eta})$ simultaneously. As discussed, we can look at the root of the indicial polynomial~\eqref{eq:indicial_pol} with the smaller real part. Here
\begin{align*}
\delta &=  2 +\frac{\alpha n}{2},\\
a &= -\alpha n,\\
b &= c\alpha n(\alpha n + \alpha + 2).
\end{align*}
The two roots are given by the expression:
$$
\lambda_{\pm} = \frac{1}{2}\left(1 +\alpha n \pm \sqrt{(1+\alpha n )^2-4c\alpha n (2 + \alpha +\alpha n)}\right).
$$
The condition for injectivity is given by
$$
\Re (\lambda_-)-\delta \leq -\frac{1}{2},
$$
but this is always satisfied, because after simplifying the parameters above we always have
$$
\Re (\lambda_-) -\delta \leq -\frac{3}{2}.
$$ 
Thus we have proven
\begin{prop}
The operators $\widehat{N(Q)}_{(y,\hat{\eta})}$ are injective for all possible $\alpha>-1$, $n\in \N$, $c\in\R$ and $(y,\hat\eta)\in S(T^*\p M)$. 
\end{prop}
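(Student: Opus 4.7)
The plan is to reduce the injectivity statement for $\widehat{N(Q)}_{(y,\hat\eta)}$ to a direct application of Proposition~\ref{prop:injectivity_of_T} (in the form given by its accompanying remark), which characterizes injectivity purely in terms of the real parts of the indicial roots and the chosen weight~$\delta$.

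First, I would identify $\widehat{N(Q)}_{(y,\hat\eta)}$, as written in~\eqref{eq:fourier_transformed_normal}, as an operator of the general form~\eqref{eq:not_bessel} with
\[
a = -\alpha n, \qquad b = c\alpha n(\alpha n + \alpha + 2), \qquad h = -\sigma_\cZ(y,\hat\eta), \qquad \beta = 1+\alpha.
\]
The hypotheses $h>0$ and $\beta>0$ of Proposition~\ref{prop:injectivity_of_T} are both automatic: $h>0$ since $\sigma_\cZ$ is the symbol of the (non-positive) Laplace--Beltrami operator $\Delta_{0,\cZ}$ on the compact singular set, and $\beta=1+\alpha>0$ by Assumption~(A3). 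The weight is $\delta = 2+\alpha n/2$, as determined at the start of the subsection by matching $L^2(M,\omega)$ with a weighted $\alpha$-density space.

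Next, following the remark after Proposition~\ref{prop:injectivity_of_T}, injectivity on $x^\delta L^2(\R_{\geq 0},dx)$ amounts to the single inequality
\[
\Re(\lambda_-)-\delta \;\leq\; -\tfrac12,
\]
where $\lambda_-$ is the root of the indicial polynomial $p(\lambda)=\lambda^2-(1+\alpha n)\lambda+c\alpha n(\alpha n+\alpha+2)$ with smaller real part. The crucial point is that this polynomial (and hence $\lambda_-$) depends only on $\alpha$, $n$, $c$, and not on the parameter $(y,\hat\eta)$; the fiber variable only modulates the subleading Bessel-potential term via $h$, which does not enter the indicial analysis.

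Finally, I would estimate $\Re(\lambda_-)$ by noting that the real part of any square root of a complex number has non-negative real part, so
\[
\Re(\lambda_-) \;=\; \tfrac{1+\alpha n}{2} - \tfrac12 \Re\!\sqrt{(1+\alpha n)^2 - 4c\alpha n(\alpha n+\alpha+2)} \;\leq\; \tfrac{1+\alpha n}{2}.
\]
Subtracting $\delta = 2+\alpha n/2$ yields $\Re(\lambda_-)-\delta \leq -3/2 < -1/2$, uniformly in $(y,\hat\eta)$ and in all admissible $(\alpha,n,c)$. This is the desired inequality, and the result follows. There is no real obstacle here: all the delicate work has been absorbed into Proposition~\ref{prop:injectivity_of_T}, and the remaining step is just to verify that the chosen weight $\delta$ lies sufficiently far to the right of $\Re(\lambda_-)$ — a slack of $1$ in the bound confirms this robustly.
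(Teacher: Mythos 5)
Your proposal is correct and follows essentially the same route as the paper: identify $\widehat{N(Q)}_{(y,\hat\eta)}$ with the model operator $T$ of Proposition~\ref{prop:injectivity_of_T} (with $a=-\alpha n$, $b=c\alpha n(\alpha n+\alpha+2)$, $h=-\sigma_\cZ>0$, $\beta=1+\alpha>0$), note via the accompanying remark that the injectivity threshold depends only on the indicial roots and not on $h$, and verify $\Re(\lambda_-)-\delta\leq -3/2<-1/2$ for $\delta=2+\alpha n/2$. This is exactly the paper's argument.
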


Thus by Theorem~\ref{thm:simpleparametrix}, we can construct a left parametrix as long as $\delta$ satisfies
$$
\Re(\lambda_+) -\delta + 1/2 \neq 0.
$$
This can be false for certain values of the parameter $c$. Indeed, $\Re(\lambda_+)$ is a continuous monotone non-increasing function of the parameter $c$. Thus for each $\alpha >-1$, $n\in \N$ fixed, there is a unique forbidden value of $c$ given by
$$
c_0 = \frac{-3+2\alpha n+n^2\alpha^2}{4 n \alpha(2+\alpha+\alpha n)},
$$
for which there is no left parametrix in or calculus. We can equivalently write this condition in terms of the discriminant $\mu$ of the indicial polynomial
$$
c=c_0 \iff \mu = 4.
$$
This is a phenomenon that was observed already in the case of the standard 2D Grushin manifolds, for which $\alpha=n=1$ and as a consequence $c=0$~\cite{grushin}.

To include this important class of generic almost-Riemannian structures, we would also like to say something about the closure of those operators for the forbidden values of the parameter $c$. To do this, we consider the closure of smooth functions $C^\infty_c(M\setminus \cZ)$ on a slightly bigger space than $L^2(M,\omega)$. The resulting space will contain the original closure as a subspace. So let us consider the closure of compactly supported smooth functions in $x^{-\varepsilon}L^2(M,\omega)$. This amounts to shifting all of the exponents by $\varepsilon$, which is enough to step outside the forbidden value and obtain a left-parametrix. The resulting closure will be $x^{2-\varepsilon-\frac{1+n}{2}}H^2_\alpha(M,\omega_\alpha)$. Similarly we can consider the closure in $x^{\varepsilon}L^2(M,\omega)$, which is given by $x^{2+\varepsilon-\frac{1+n}{2}}H^2_\alpha(M,\omega_\alpha)$. The inclusions
$$
x^{\varepsilon} L^2(M,\omega)\hookrightarrow L^2(M,\omega)\hookrightarrow x^{-\varepsilon} L^2(M,\omega)
$$
are continuous. Exactly the same discussion as before proves the following result.

\begin{thm}
\label{thm:closure}
Let $M$ be a $\alpha$-Grushin manifold of dimension $n$. If $\mu \neq 4$, then the closure of $\Delta-cS$ or the minimal domain is given by
$$
D_{\min}(\Delta-cS) = x^{2-\frac{1+n}{2}}H^2_\alpha(M,\omega_\alpha).
$$
If $\mu = 4$, then
$$
\bigcup_{\varepsilon>0}  x^{2+\varepsilon-\frac{1+n}{2}}H^2_\alpha(M,\omega_\alpha) \subset D_{\min}(\Delta-cS) \subset \bigcap_{\varepsilon>0}  x^{2-\varepsilon-\frac{1+n}{2}}H^2_\alpha(M,\omega_\alpha).
$$
\end{thm}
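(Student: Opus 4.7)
The plan is to apply Mendoza's Proposition~\ref{prop:mendoza} with $H_1=H_2=L^2(M,\omega)$ and the candidate domain $D=x^{2-(1+n)/2}H^2_\alpha(M,\omega_\alpha)$. The inclusion $D\hookrightarrow L^2(M,\omega)$ is continuous by Definition~\ref{alpha-sobolev} and the identification $L^2(M,\omega)\simeq x^{-(1+n)/2}L^2(M,\omega_\alpha)$. Thus the two hypotheses we must verify are that $\Delta-cS:D\to L^2(M,\omega)$ has closed range and closed kernel in $L^2(M,\omega)$. The key observation is that $Q:=x^2(\Delta-cS)$ belongs to $\mathrm{Diff}_\alpha^2(M)$, and as an operator
\[
Q: x^{2-(1+n)/2}H^2_\alpha(M,\omega_\alpha)\longrightarrow x^{2-(1+n)/2}L^2_\alpha(M,\omega_\alpha),
\]
it shares the same kernel as $\Delta-cS$ and, since multiplication by $x^{-2}$ is a topological isomorphism between $x^{2-(1+n)/2}L^2_\alpha$ and $x^{-(1+n)/2}L^2_\alpha\simeq L^2(M,\omega)$, also yields the same range up to this isomorphism. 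Hence both properties reduce to showing that $Q$ is left semi-Fredholm at the weight $\delta=2+\alpha n/2$.

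The second step is to apply Theorem~\ref{thm:simpleparametrix} to $Q$. Three hypotheses must be checked: $\alpha$-ellipticity of $Q$, constancy of its indicial roots, and the conditions $\delta>\overline\delta$ together with $\delta-\tfrac12\notin \Re(\mathrm{Spec}_b(Q))$. Ellipticity follows because the Riemannian principal symbol of $\Delta$ lifts to a nondegenerate $\alpha$-symbol, and constancy of the indicial roots is a consequence of Proposition~\ref{prop:curvature_ass}, which shows that the leading part of $x^2S$ is a constant, so the indicial operator~\eqref{eq:indicial_op} is independent of $y$. The injectivity threshold condition $\delta>\overline\delta$ reduces via Proposition~\ref{prop:injectivity_of_T}, applied to the Fourier-transformed normal operator $\widehat{N(Q)}$ as in~\eqref{eq:fourier_transformed_normal} with $a=-\alpha n$ and $b=c\alpha n(\alpha n+\alpha+2)$, to the inequality $\Re(\lambda_-)-\delta\le -\tfrac12$; a direct computation in fact yields $\Re(\lambda_-)-\delta\le -\tfrac32$ for every choice of parameters, so injectivity always holds. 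Finally, the explicit formula for the indicial roots $\lambda_\pm=\tfrac12(1+\alpha n\pm\sqrt{\mu})$ together with $\delta-\tfrac12=\tfrac{3+\alpha n}{2}$ shows that the forbidden condition $\Re(\lambda_+)=\delta-\tfrac12$ is exactly $\mu=4$. For $\mu\neq 4$ the parametrix exists, $Q$ is left semi-Fredholm, and Mendoza's proposition delivers $D_{\min}(\Delta-cS)=D$; density of $C^\infty_c(M\setminus\cZ)$ in $D$ with respect to the graph norm is standard since smooth functions compactly supported away from $\cZ$ are dense in each weighted $\alpha$-Sobolev space.

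For the critical case $\mu=4$ the parametrix construction breaks down at the weight $\delta=2+\alpha n/2$, so the plan is to circumvent this by perturbing the ambient $L^2$-space. For any $\varepsilon>0$ sufficiently small, the shifted weight $\delta\pm\varepsilon$ lies off the forbidden line $\{\Re z=\Re(\lambda_+)\}$ (the only forbidden line, since $\mu=4$ forces $\lambda_+=\lambda_-$), and the injectivity inequality $\Re(\lambda_-)-(\delta\pm\varepsilon)\le -\tfrac12$ remains valid by continuity. Running the argument of the previous paragraph with $L^2(M,\omega)$ replaced by $x^{\mp\varepsilon}L^2(M,\omega)$ then identifies the closure of $C^\infty_c(M\setminus\cZ)$ in those spaces as $x^{2\mp\varepsilon-(1+n)/2}H^2_\alpha(M,\omega_\alpha)$. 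The continuous inclusions $x^{\varepsilon}L^2(M,\omega)\hookrightarrow L^2(M,\omega)\hookrightarrow x^{-\varepsilon}L^2(M,\omega)$ imply that the true minimal domain is squeezed between these two families, producing the asserted sandwich.

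The main obstacle in carrying this out rigorously is the verification that the hypotheses of Theorem~\ref{thm:simpleparametrix} reduce precisely to the algebraic condition $\mu\neq 4$, together with confirming that the Mellin-theoretic injectivity threshold is uniform in $(y,\hat\eta)\in S^*(\partial M)$. Uniformity is ensured by Remark~\ref{rmrk:on_the_critical_value}, which asserts that the critical weight does not depend on the coefficient $h$ of the $x^{2(1+\alpha)}$-term in the normal operator; this is what allows a single choice of $\delta$ to work globally and thus renders the reduction to the indicial polynomial of Proposition~\ref{prop:curvature_ass} genuinely decisive.
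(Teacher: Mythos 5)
Your proposal follows essentially the same route as the paper's proof: reduce to left semi-Fredholmness of $Q=x^2(\Delta-cS)$ at the weight $\delta=2+\tfrac{\alpha n}{2}$ via Proposition~\ref{prop:mendoza}, verify injectivity of the normal operators through the modified-Bessel analysis of Proposition~\ref{prop:injectivity_of_T} (obtaining $\Re(\lambda_-)-\delta\le -\tfrac32$), identify $\mu=4$ as the unique obstruction to the left parametrix, and treat the critical case by shifting the weight by $\pm\varepsilon$. One small correction: $\mu=4$ does not force $\lambda_+=\lambda_-$ (that would be $\mu=0$); rather $\lambda_+-\lambda_-=\sqrt{\mu}=2$, but since the two roots are then separated by $2$, a sufficiently small shift $\delta\mapsto\delta\pm\varepsilon$ still avoids both forbidden lines and your conclusion is unaffected.
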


\subsection{Adjoint of $\Delta - cS$}

\label{subsec:adjoint}

The domain of the adjoint, or the maximal domain, of $\Delta - cS$ is given by
$$
D_{\max}(\Delta - cS) = \left\{u\in L^2(M,\omega)\,:\, (\Delta-cS)u\in L^2(M,\omega)\right\}.
$$
In order to find when $\Delta-cS$ is essentially self-adjoint and construct its self-adjoint extensions, we need a better description of this space modulo functions in the minimal domain.


Here again, we use our left parametrix. However, we need a slightly refined version given in Theorem~\ref{thm:param}, which compared to Theorem~\ref{thm:simpleparametrix}, states that one can invert $\alpha$-elliptic operators modulo not simply compact operators, but modulo operators which lie in the large calculus. Assume first $\mu \neq 4$. By the main parametrix Theorem~\ref{thm:param} we have a left parametrix $G$ and a very residual harmonic projector $\pi_{ker}$ such that
$$
Gx^2(\Delta - cS) = \id - \pi_{ker}.
$$ 
In particular, if $x^2(\Delta - cS)u = f$, then we obtain
$$
u = Gf + \pi_{ker} u.
$$ 
If $u\in D_{\max}(\Delta - cS)$, by definition $u \in L^2(M,\omega)$ and $f\in x^2 L^2(M,\omega)$, thus by construction $Gf$ will lie in the closure. We are thus reduced to studying $\pi_{ker}u$. This implies that
\begin{equation}
\label{eq:domain_max}
D_{\max}(\Delta - cS) = \left(\ker (\Delta - cS) \cap L^2(M,\omega)\right) + D_{\min} (\Delta - cS).
\end{equation}
A consequence of~\eqref{eq:domain_flag} is that $\Delta - cS$ is essentially self-adjoint if and only if
$$
\left(\ker (\Delta - cS) \cap L^2(M,\omega)\right) \subset D_{\min} (\Delta - cS).
$$

We will determine the precise form of elements in the kernel by considering them as limits of functions with polyhomogeneous expansions. Namely, by Theorem~\ref{thm:phg_Grushin} elements of $\ker (\Delta-cS)$ have a weak asymptotic expansion~\eqref{eq:polyhom_ass}, which we repeat here
\begin{equation}
\label{eq:polyhom_ass_sobolev}
u \sim \sum_{\lambda \in \emph{Spec}_b(L)} \sum_{\theta \in \Theta} \sum_{p=0}^{p_\lambda} x^{\lambda + \theta}(\log x)^p \, u_{\lambda,\theta,p}(y),  \qquad x\to 0+, \quad u_{\lambda,\theta,p}\in H_\alpha^{-r_{\lambda,\theta}}(\cZ)
\end{equation}
where $\lambda$ are roots of the indicial polynomial,  and the order of negative regularity is $r_{\lambda,\theta}=\Re(\lambda) +(i+j)-\delta+\tfrac12$ for the value $\theta=(1+\alpha)i+j\in \Theta$. Here we say that this is a weak asymptotic expansion in the sense that there is such an expansion of the integral $\int_\cZ u(x,y)\chi(y)dy$ for any test functions on $\cZ$. As mentioned in the introduction, we give the proof of this technical result in Section~\ref{sec:proofs}.



Let us first focus only on the polyhomogeneous elements of the kernel, i.e., functions for which the coefficients $u_{\lambda,\theta,p}(y)$ in the expansion \eqref{eq:polyhom_ass_sobolev} are actually smooth. Notice that in this case the remainder can be an element of the the minimal domain. It is clear that $x^{2-\frac{1+n}{2}}H^2_\alpha(M,\omega_\alpha)\subset x^{2-\frac{1+n}{2}}L^2_\alpha(M,\omega_\alpha)$. If $\mu\neq 4$, then Theorem~\ref{thm:closure} implies that any function $u\in D_{\min}(\Delta - cS)$ must have asymptotics 
\begin{equation}
\label{eq:min_asymptotics}
u(x) = o\left(x^\frac{3+\alpha n}{2}\right), \qquad x \to 0+. 
\end{equation}

Thus we need all of the expansions modulo these terms. On the other hand, if $u\in D_{\max}$, then $u\in L^2(M,\omega)$. Thus functions $u\in D_{\max}$ must have asymptotics satisfying
\begin{equation}
\label{eq:max_asymptotics}
u(x) = o\left(x^\frac{-1+\alpha n}{2}\right), \qquad x \to 0+.  
\end{equation}

Let us use asymptotic expansion~\eqref{eq:polyhom_ass_sobolev} as an ansatz assuming in addition that all of the coefficients are smooth functions. This is essentially the Frobenius method and we are reduced to distinguishing between two cases
\begin{enumerate}
\item The roots of the indicial polynomial are such that $\lambda_+ - \lambda_- \notin \Theta $ (see equation~\eqref{eq:theta_def});
\item The roots of the indicial polynomial are separated by an element of $\Theta$.
\end{enumerate}
In the first case, there are no log terms and we get a pure power series asymptotics. 

 To obtain the desired asymptotic expansions the first step is to rewrite operator $x^2(\Delta-cS)$ as a power series with powers in $\Theta$ using its indicial part and lower order terms. To do so, we expand formally all coefficients of $s^2\Delta$ in Taylor series and use Proposition~\ref{prop:curvature_ass} of the curvature term. Recall the explicit form of $x^2(\Delta -cS)$ given by~\eqref{eq:general_op}. We get:
\begin{align*}
\dive_{\tilde{\omega}}(\p_x)(x,y) &= \sum_{\theta \in \Theta}\dive_\theta(y)x^\theta,\\
S(x,y) &= \frac{S_{0}}{x^2} + \sum_{\theta\in \Theta\setminus\{0\}}S_{\theta}(y)x^{-2+\theta},\\
\Delta_{x,\cZ} &= \sum_{\theta \in \Theta} \Delta_{\theta,\cZ}.
\end{align*}
Note that $\Delta_{0,\cZ}$, as discussed previously, is the Laplace-Beltrami operator on the singular set. For brevity we will often omit the dependence of the coefficients on $y$ until the rest of this subsection. 
Now we rewrite $x^2(\Delta -cS)$ as
$$
x^2(\Delta -cS) = I(x^2(\Delta - cS)) +\sum_{\theta\in\Theta}x^{1+\theta}\dive_\theta x\p_x - \sum_{\theta\in \Theta\setminus\{0\}}c S_{\theta}x^{\theta} + \sum_{\theta\in\Theta} x^{2(1+\alpha)+\theta} \Delta_{\theta,\cZ}.
$$
Note that in the non-indicial part of the exrepssion all of the powers of $x$ belong to $\Theta\setminus \{0\}$.

We can now plug-in the asymptotics ansatz
\begin{equation}
\label{eq:series_nondeger}
u = \sum_{\varphi\in \Theta} a_{\varphi}(y)x^{\lambda+\varphi}
\end{equation}
into $x^2(\Delta -cS)u = 0$. Recall that
$$
I(x^2(\Delta - cS))x^\lambda = p(\lambda) x^\lambda,
$$
where $p$ is the indicial polynomial of $x^2(\Delta - cS)$. Thus 
\begin{align}
\sum_{\varphi \in \Theta}a_\varphi p(\lambda + \varphi)x^{\lambda+\varphi}+\sum_{\varphi \in \Theta}\sum_{\theta \in \Theta \setminus \{0\}} R_{c,\theta}(a_\varphi) x^{\lambda+\varphi+\theta} =0,\label{eq:frobenius_recursion}
\end{align}
where $R_{c,\theta}$ are differential operators on $\cZ$ obtained by combining all of the terms of the same order. We solve this equation term by term starting with the principal one:
$$
p(\lambda)a_{0}x^\lambda = 0.
$$
Thus $\lambda$ must be one of the two roots $\lambda_{\pm}$ of the indicial polynomial. Note that the first coefficient $a_{0}$ can be an arbitrary smooth function on $\cZ$. We then proceed inductively solving for all of the remaining coefficients. For each term we get an equation of the type
$$
p(\lambda+\varphi)a_\varphi + \text{ lower order terms } = 0,
$$
which can be solved for $a_\varphi$ under the assumption that $\lambda+\varphi$ is not an indicial root. This inductive procedure shows that if the first coefficient of the weak expansion is smooth, then all the remaining coefficients are also smooth and $u$ is, in fact, polyhomogeneous.

If we are in one of the critical cases and the two indicial roots are separated by an element of $\Theta$, then clearly we can solve recursively~\eqref{eq:frobenius_recursion} with $\lambda = \lambda_+$. This gives a family of solutions $u_+$. In order to find the second family of solutions $u_-$ we can use a trick from the classical Frobenius method, namely we take
$$
u_- = C u_+\log x + \sum_{\theta\in \Theta} a^-_{\theta}x^{\lambda_-+\theta}.
$$
Where $C \neq 0$ is a constant that must be determined. If we plug this ansatz into $x^2(\Delta-cS)u_- = 0$, we obtain a modification of~\eqref{eq:frobenius_recursion} and we proceed in a similar manner. And there are no other solutions of the form~\eqref{eq:polyhom_ass}.

\begin{remark}
The coefficient $a_{\varphi}^-$, which could not be determined from the recurrence relations can be taken to be any smooth function. It is accustomed to take them to be zero, since a non-zero choice would correspond to a different choice of of the principal coefficient $a_{0}^+$. If $\lambda_+ = \lambda_-$, then $C\neq 0$ can be arbitrary and it is common to take $C=1$ and choose $a_{0}^-$ to be an arbitrary function instead.
\end{remark}

The following characterization of the maximal domain holds.
\begin{thm}
\label{thm:max_domain_full}
The maximal domain of $\Delta - cS$ is given by
\begin{equation}
    \label{eq:max_domain_separation}
    D_{\max}(\Delta - cS) = \overline{(\spann\{u_+\} \oplus \spann\{u_-\})\cap L^2(M,\omega)}^{||\cdot||_{\emph{Gr}}} + D_{\min}(\Delta - cS), 
\end{equation}
where the span is taken over an arbitrary choice of the leading term coefficients as smooth functions on the singular set $\cZ$ and the closure is in graph norm of the operator $\Delta - cS$.
\end{thm}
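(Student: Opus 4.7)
The plan is to use \eqref{eq:domain_max} to reduce the statement to an $L^2$-density claim within $\ker(\Delta-cS)$, and then establish this density by smoothing the principal coefficients of the weak asymptotic expansion from Theorem~\ref{thm:phg_Grushin} and reconstructing approximating polyhomogeneous kernel elements through the Frobenius recurrence \eqref{eq:frobenius_recursion}. Because \eqref{eq:domain_max} already gives $D_{\max}(\Delta-cS) = (\ker(\Delta-cS)\cap L^2(M,\omega)) + D_{\min}(\Delta-cS)$, and because $\spann\{u_+\}\oplus\spann\{u_-\}\subset\ker(\Delta-cS)$ so the graph norm on it collapses to the $L^2(M,\omega)$-norm, \eqref{eq:max_domain_separation} is equivalent to $L^2$-density of $(\spann\{u_+\}\oplus\spann\{u_-\})\cap L^2(M,\omega)$ inside $\ker(\Delta-cS)\cap L^2(M,\omega)$.

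Fix $u\in\ker(\Delta-cS)\cap L^2(M,\omega)$. Using $L^2(M,\omega)\simeq x^{-(1+n)/2}L^2(M,\omega_\alpha)$, Theorem~\ref{thm:phg_Grushin} with $\delta=-(1+n)/2$ provides a weak asymptotic expansion \eqref{eq:polyhom_ass_sobolev} at each side of $\cZ$, whose principal coefficients $u^{\pm}_{0,0}$ lie in $H^{-r_\pm}(\cZ)$ with $r_\pm=\Re(\lambda_\pm)+(n+2)/2$. Density of $C^\infty(\cZ)$ in $H^{-r_\pm}(\cZ)$ yields smooth sequences $a^{\pm,(k)}\to u^{\pm}_{0,0}$ in $H^{-r_\pm}(\cZ)$ (similarly on the other side of $\cZ$). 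Feeding these smooth principal data into \eqref{eq:frobenius_recursion} — incorporating log corrections in the critical case $\lambda_+-\lambda_-\in\Theta$ — produces polyhomogeneous kernel elements $u_\pm^{(k)}$ with smooth coefficients at every order; set $v^{(k)}:=u_+^{(k)}+u_-^{(k)}\in\spann\{u_+\}\oplus\spann\{u_-\}$.

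The crux is to prove $v^{(k)}\in L^2(M,\omega)$ for $k$ large and $\|u-v^{(k)}\|_{L^2(M,\omega)}\to 0$. Setting $w^{(k)}:=u-v^{(k)}\in\ker(\Delta-cS)$, the left parametrix identity $G\,x^2(\Delta-cS)=\id-\pi_{\ker}$ from Theorem~\ref{thm:param} gives $w^{(k)}=\pi_{\ker}w^{(k)}$, while the weak expansion of $w^{(k)}$ has principal coefficients $u^{\pm}_{0,0}-a^{\pm,(k)}\to 0$ in $H^{-r_\pm}(\cZ)$. The principal obstacle is then a continuity estimate for $\pi_{\ker}\in\Psi^{-\infty,\cE}_\alpha(M)$ as a map from boundary data in $H^{-r_+}(\cZ)\oplus H^{-r_-}(\cZ)$ into $L^2(M,\omega)$: one traces the index sets of $\pi_{\ker}$ at the left, right, and front faces of $M_\alpha^2$, identifies them with $\lambda_\pm+\Theta$, and matches the hypotheses of the large-calculus mapping theorem of Section~\ref{subsec:parametrix_statement} to obtain
$$\|w^{(k)}\|_{L^2(M,\omega)}\;\lesssim\;\sum_\pm\|u^{\pm}_{0,0}-a^{\pm,(k)}\|_{H^{-r_\pm}(\cZ)}\;\longrightarrow\;0.$$
The hypothesis $\mu\neq 4$ underlying the construction is precisely what ensures that the weight line $\{\Re\zeta=\delta-1/2\}$ in the Mellin plane avoids the boundary spectrum, so that the parametrix of Theorem~\ref{thm:param} exists and the continuity above holds; this closes the approximation and proves \eqref{eq:max_domain_separation}.
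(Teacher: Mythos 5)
Your reduction of \eqref{eq:max_domain_separation} to an $L^2$-density statement inside $\ker(\Delta-cS)\cap L^2(M,\omega)$ is sound, and so is the idea of smoothing the leading coefficients of the weak expansion and rebuilding polyhomogeneous kernel elements by the Frobenius recursion. The proof breaks down, however, at the step you yourself call the crux. The identity $w^{(k)}=\pi_{\ker}w^{(k)}$ is vacuous here: $\pi_{\ker}$ is a projection acting on functions on $M$ (bounded on weighted $L^2(M)$ by Theorem~\ref{thm:param}), not an operator taking boundary data on $\cZ$ to kernel elements. There is no ``Poisson operator'' in the developed calculus whose input is $H^{-r_+}(\cZ)\oplus H^{-r_-}(\cZ)$ and whose output is the kernel element with those leading coefficients, so ``tracing the index sets of $\pi_{\ker}$ and matching the hypotheses of the large-calculus mapping theorem'' cannot yield the estimate
$$
\|w^{(k)}\|_{L^2(M,\omega)}\;\lesssim\;\sum_\pm\|u^{\pm}_{0,0}-a^{\pm,(k)}\|_{H^{-r_\pm}(\cZ)}.
$$
That estimate is really the assertion that the leading-coefficient map on $\ker(\Delta-cS)\cap L^2$ (mod $D_{\min}$) is injective with closed range, i.e.\ an open-mapping/boundary-regularity statement; since the deficiency spaces are infinite dimensional when $\mu<4$, this is not automatic and is nowhere established. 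Without it, convergence of the principal coefficients alone does not control the $L^2$ norm of $u-v^{(k)}$ (nor the infinitely many higher coefficients, which only converge in successively weaker Sobolev topologies).

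The paper avoids this entirely by approximating in the interior rather than from the boundary: one mollifies a general $v\in D_{\max}$ by convolution in the $y$-variables, $v_\varepsilon=v\ast\phi_\varepsilon$. Since the convolution acts only tangentially it essentially commutes with the singular $\p_x$-part of $\Delta-cS$, so $v_\varepsilon\to v$ in graph norm by dominated convergence, while the coefficients of the weak expansion of $v_\varepsilon$ are automatically smooth (all orders at once). One then applies $\pi_{\ker\cap L^2}$ — used only as a bounded operator on $L^2(M,\omega)$ preserving polyhomogeneity — to transfer the approximation into the kernel. If you want to keep your boundary-data route, you would have to actually construct the solution operator from leading coefficients to kernel elements and prove its continuity; otherwise, replace the crux step with a tangential mollification argument of this type.
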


\begin{proof}
Since the minimal domain by definition is closed in the graph norm, and all elements of $(\spann\{u_+\} \oplus \spann\{u_-\})\cap L^2$ belong to the maximal domain, it is clear that the space on the right hand side of~\eqref{eq:max_domain_separation} is a subspace of $D_{\max}(\Delta - cS)$. Hence, we only need to prove the other direction. 

For this, we will show that any element of $\cD_\text{max}(\Delta-cS)$ can be approximated in the graph norm by polyhomogeneous elements of $\cD_\text{max}(\Delta-cS)$. Consider any $v\in \cD_\text{max}(\Delta-cS)$ and let $\phi$ be local cut off function. Namely, assume that $\phi(x,y)=\phi_1(x)\phi_2(y)$ where $\phi_1\equiv 1$ in a neighborhood of $x=0$, and $\phi_2$ a smooth bump function supported in a coordinate chart $(x,y)$ centered at a point $(0,y_0)=p\in \cZ$; then $\phi v\in \cD_\text{max}(\Delta-cS)$ as well. Now consider for $\psi\in C^\infty(\cZ)$ the convolution
\[ v\ast \psi(x,y)=\int_\cZ v(x,y-y')\psi(y')dy' , \]
which has an asymptotic expansion as $x\to 0$ induced by the expansion of $v$, hence $v\ast \psi\in \cD_\text{max}(\Delta-cS)$. Further, by definition of the weak expansion of $v$, the coefficients of \eqref{eq:polyhom_ass_sobolev} are now smooth in $y$ thus $v\ast\psi\in \cD_\text{max}(\Delta-cS)\cap \cA_{phg}$.

Now choose $\phi\in C^\infty(B)$ to be a smooth bump function as above, compactly supported in a coordinate chart centered around $y_0\in \cZ$, satisfying $\int_\cZ \phi(y)dy=1$; in particular the Fourier transform of $\phi$ satisfies $\hat{\phi}(0)=1$. Thus the sequence $\phi_\varepsilon(y):=\varepsilon^{-n}\phi(y/\varepsilon)$ converges to 1 pointwise, and the convolution $v_\varepsilon=v\ast \phi_\varepsilon$ satisfies
\[ ||\hat{v}_\varepsilon-\hat{v}||_{L^2} \to 0 \text{ as }\varepsilon\to 0 \]
by dominated convergence. Further $(\Delta-cS)v_\varepsilon = (\Delta-cS)v\ast \phi_\varepsilon+[(\Delta-cS),\phi_\varepsilon\ast(-)]v$ is in $L^2$ (since $\psi_\varepsilon\ast(-)$ is constant in $x$, it vanishes upon applying $\partial_x$ derivatives which is where the only singular coefficients of $\Delta-cS$ arise), thus by the same argument as above we have $(\Delta-cS)v_\varepsilon\to (\Delta-cS)v$ in $L^2$. This completes the proof of the claim of graph norm convergence.

Now, having shown this graph norm approximation of $v\in \cD_{\text{max}}(\Delta-cS)$ by polyhomogeneous elements $v_\varepsilon$ of the maximal domain, by projecting such elements into $(\ker\cap L^2)$ we obtain a sequence $u_\varepsilon=\pi_{\ker\cap L^2}v_\varepsilon$ of \emph{polyhomogeneous} functions approximating $\pi_{\ker\cap L^2}v$ in the graph norm. Because an element $u\in \ker(\Delta-cS)\cap L^2$ is polyhomogeneous if and only if the coefficients in \eqref{eq:polyhom_ass_sobolev} are smooth, equivalently if the leading coefficients $u_\pm$ in this expansion are smooth (since the higher coefficients are formally determined by these leading two as $Lu=0$), we have shown the claim.


\end{proof}

Theorem~\ref{thm:self-adjoint_general} is now a direct consequence of the previous theorem.

\begin{proof}[Proof of Theorem \ref{thm:self-adjoint_general}]
Since $D_{\min}(\Delta - cS)$ is closed in the graph norm, inclusions~\eqref{eq:domain_flag} and Theorem~\ref{thm:max_domain_full} implies that we only need to determine when
$$
\left((\spann\{u_+\} \oplus \spann\{u_-\}) \cap L^2(M,\omega)\right) \subset D_{\min}(\Delta - cS).
$$
Note that since the families $u_{\pm}$ have polyhomogeneous expansions, if $u_{\pm} \in L^2$, then it will be automatically in $H^2_\alpha$ as well. Thus we only need to look at the principal order in the asymptotic expansions. In particular, $\Delta- cS$ will be essentially self-adjoint if and only if either $u_{\pm}$ do not satisfy~\eqref{eq:max_asymptotics} or, if they do, then additionally they satisfy~\eqref{eq:min_asymptotics}. 

The principal terms of $u_{\pm}$ are determined by $\lambda_{\pm}$. In our case the indicial polynomial is given 
$$
p(\lambda) = \lambda^2 - (1+\alpha n)\lambda + c \alpha n(\alpha n + \alpha + 2).  
$$
The discriminant is exactly $\mu$ from the statement and the two roots of $p(\lambda) = 0$ are given by
$$
\lambda_\pm = \frac{1+\alpha n \pm \sqrt{\mu}}{2}.
$$
Assume first that there are no logarithmic terms in the principal term of $u_{-}$, i.e., $\lambda_+ \neq \lambda_-$. Then $u_{\pm}\in D_{\max}(\Delta - cS)$ if and only if
$$
\Re(\lambda_{\pm}) > \frac{-1+\alpha n}{2}.
$$ 
If we write down this condition explicitly, then we see that
\begin{enumerate}
\item $u_+ $ always belongs to $D_{\max}(\Delta - cS)$;
\item $u_- $ belongs to $D_{\max}(\Delta - cS)$ if and only if
$$
\mu < 4. 
$$
\end{enumerate}
Similarly, $u_{\pm} \in D_{\min}(\Delta - cS)$ if and only if
$$
\Re(\lambda_{\pm}) > \frac{3+\alpha n}{2}.
$$
This is equivalent to
\begin{enumerate}
\item $u_+ $ belongs to $D_{\min}(\Delta - cS)$ if and only if
$$
\mu > 4. 
$$
\item $u_- \notin D_{\min}(\Delta - cS)$.
\end{enumerate}

Note that $\mu = 4$ corresponds to the case when $\lambda_+ - \lambda_- = 1$ and thus has logarithmic terms. We now see that if there are no logarithmic terms, then $\Delta - cS$ is essentially self-adjoint if and only if $\mu>4$.

If $\mu \neq 4$, the logarithmic terms are handled using order analysis in a similar manner. 
\end{proof}

\begin{remark}
A natural question is what happens when $\mu = 4$. In this case the closure is slightly bigger compared to the case $\mu \neq 4$ and one can expect that operator $\Delta - cS$ could be actually essentially self-adjoint. This is indeed the case for $\alpha = n =1$, which forces $c=0$ as proven in~\cite{grushin}. The proof uses heavily the positivity of $\Delta$ and a similar proof with the use of Kato-Relich theorem should work in the general case as well. Note that our description of the maximal domain in Theorem~\ref{thm:max_domain_full} still holds. The problem is that we can not obtain using our technique a precise enough description of $D_{\min}$.
\end{remark}

\begin{remark}
Let us see what happens when $c = 0$. We have 
$$
\mu = (1+\alpha n)^2
$$
and $\Delta$ is essentially self-adjoint if $\alpha \in \left (-1,\min\{-1,-3/n\}\right ) \cup (1/n,+\infty)$ and is not essentially self-adjoint if
$\alpha \in \left (\max\{-1,-3/n\},1/n\right )$. Compare this with Theorem~\ref{thm:sa_grushin_standard}, recalling that we only consider the case $\alpha > -1$. 
\end{remark}

\section{Self-adjoint extensions}
\label{sec:sa_extensions}

\subsection{Asymmetry forms method and the symplectic linear algebra of self-adjoint extensions}
\label{subsec:asym_forms}

Let us recall the asymmetry form method which can be used to construct self-adjoint extensions of an operator. Let $H$ be a Hilbert space and $A: D(A) \subset H \to H$ a densely defined symmetric operator. As discussed in the introduction, we can associate to $A$ two natural extensions: the closure $\overline{A}$ and the adjoint $A^*$ defined as usual with
$$
D(A) \subset D(\overline{A}) \subset D(A^*).
$$
Let $\cD_{\pm}$ be the two deficiency spaces of $A$, i.e.
$$
\cD_{\pm}=\ker (A^* {\pm} i I).
$$
Then a result due to Von Neumann \cite[Theorem 3.4]{sa_general} states that
$$
D(A^*) = D(\overline{A}) \oplus \cD_+ \oplus \cD_-
$$

The asymmetry forms of an operator $B$ are defined as:
$$
\omega_B(x,y) := ( x,By) - (Bx,y), \qquad \omega_B(x) := \omega_B(x,x), \qquad \forall x,y\in D(B).
$$
If $B = A$, then $\omega_A = 0$ thanks to the assumption of $A$ symmetric, so we instead consider $B=A^*$. In this case $\omega_{A^*}$ measures how far $A^*$ is far from being a symmetric operator.

\begin{lem}
If $x\in D(\overline{A})$, then $\omega_{A^*}(x,\cdot)=0$. 
\end{lem}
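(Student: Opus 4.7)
The plan is to reduce the identity $\omega_{A^*}(x,\cdot)=0$ on $D(\overline{A})$ to the defining property of the adjoint, using the standard fact that $A^{**}=\overline{A}$ for a densely defined symmetric operator.

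First, I would observe that since $A$ is densely defined and symmetric, one has $A\subset A^*$ and hence $\overline{A}\subset A^*$. In particular, for any $x\in D(\overline{A})$ we have $x\in D(A^*)$ with $A^*x = \overline{A}x$, so the expression
\[
\omega_{A^*}(x,y) \;=\; (x,A^*y) - (A^*x,y)
\]
is well-defined for every $y\in D(A^*)$ and equals $(x,A^*y)-(\overline{A}x,y)$. This preliminary step is where one must be a little careful with domains; it is also the only place domain bookkeeping really intervenes.

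Next, I would invoke the identity $A^{**}=\overline{A}$, which holds because a densely defined symmetric operator is automatically closable with closure equal to its second adjoint. Unraveling the meaning of $x\in D(A^{**})$ by the very definition of the adjoint of $A^*$: the antilinear functional $y\mapsto (x,A^*y)$ is continuous on $D(A^*)$ and represented by $A^{**}x=\overline{A}x$, so
\[
(x,A^*y) \;=\; (\overline{A}x,y) \qquad \text{for all } y\in D(A^*).
\]
Combining this with the rewriting from the previous paragraph gives immediately $\omega_{A^*}(x,y)=0$ for all $y\in D(A^*)$, which is the desired conclusion.

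An equivalent and perhaps more elementary route is by approximation: choose a sequence $x_n\in D(A)$ with $x_n\to x$ and $Ax_n\to \overline{A}x$. Each $x_n$ lies in $D(A)\subset D(A^*)$ with $A^*x_n=Ax_n$, and the defining property of $A^*$ gives $(Ax_n,y)=(x_n,A^*y)$ for $y\in D(A^*)$; hence $\omega_{A^*}(x_n,y)=0$ for every $n$, and passing to the limit using continuity of the inner product yields the result. Either way there is no genuine obstacle: the content is just that the adjoint relation extends from $D(A)$ to its graph-closure $D(\overline{A})$.
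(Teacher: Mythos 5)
Your proof is correct and follows essentially the same route as the paper's: both use $\overline{A}=A^{**}$ together with the inclusion $\overline{A}\subset A^*$ to rewrite $\omega_{A^*}(x,y)=(x,A^*y)-(\overline{A}x,y)=0$ from the defining property of the adjoint. The approximation argument you sketch at the end is a valid alternative but not needed.
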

\begin{proof}We know that $\overline{A}=(A^*)^*$ and thus by definition if $z=\overline{A}x$, then
$$
(x,A^*y) - (z,y) = 0, \qquad \forall y \in D(A^*).
$$ 
But $D(\overline{A})\subset D(A^*)$ and hence $z = \overline{A}x= A^*x$ and
$$
\omega_{A^*}(x,y) = (x,A^* y) - (A^* x,y) = 0, \qquad \forall y \in D(A^*).
$$
\end{proof}
We can now see that the only obstruction for $A^*$ to be symmetric is given by the deficiency spaces, and all symmetric extensions are given by subspaces of $D(A^*)/D(\overline A) \simeq \cD_+ \oplus \cD_-$ on which $\omega_{A^*}$ vanishes. We can write explicitly the quadratic form $\omega_A$ using the definition of the deficiency spaces:
\begin{equation}
    \label{eq:asym_form_sympl}
    \omega_{A^*}(x_+ + x_-) = 2i(\|x_+\|^2 - \|x_-\|^2), \qquad x_\pm \in \cD_{\pm}.
\end{equation}
In particular, if one of the deficiency spaces is trivial and the other is not, there are no non-trivial symmetric extensions.

A self-adjoint extension neccesarily must be a maximal symmetric extension. However, this is not sufficient. The form $\omega_{A^*}$ must also be non-degenerate on $D(A^*)/D(\overline A)$~\cite{sa_general}. For example, if the deficiency indexes were different, then the form $\omega_{A^*}$ would have a kernel on $D(A^*)/D(\overline A)$. If the form $\omega_{A^*}$ is non-degenerate, then indeed, every self-adjoint extension is characterized by a \textit{Lagrangian space} inside $D(A^*)/D(\overline A)$, i.e., a maximal subspace on which the asymmetry form vanishes. This allows us to classify all of the self-adjoint extensions of real-differential operators. Namely, the following theorem holds.

\begin{thm}
    Let $H$ be a Hilbert space and $A: D(A) \subset H \to H$ a densely defined symmetric operator. If $\omega_{A^*}$ is non-degenerate on $D(A^*)/D(\overline A)$, then 
    \begin{enumerate}
        \item $(D(A^*)/D(\overline A),\omega_{A^*})$ is a symplectic space;
        \item there is a natural bijection between self-adjoint extensions of $A$ and Lagrangian subspaces of $(D(A^*)/D(\overline A),\omega_{A^*})$.
    \end{enumerate} 
\end{thm}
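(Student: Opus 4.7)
My plan is first to verify that $\omega_{A^*}$ descends to a non-degenerate sesquilinear skew form on $\cD := D(A^*)/D(\overline{A})$, giving (1); then to establish (2) by exploiting the tautological bijection between closed extensions of $A$ lying between $\overline{A}$ and $A^*$ and subspaces $V \subset \cD$, matching symmetry of the extension with isotropy of $V$ and self-adjointness with the Lagrangian condition.

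For (1), the preceding lemma gives $\omega_{A^*}(x,y)=0$ for all $x\in D(\overline{A})$ and $y\in D(A^*)$. A direct computation from the definition yields the conjugate skew-symmetry $\omega_{A^*}(y,x)=-\overline{\omega_{A^*}(x,y)}$, so $\omega_{A^*}$ also vanishes when $y\in D(\overline{A})$. Hence $\omega_{A^*}$ descends to a sesquilinear skew form on $\cD$, non-degenerate by hypothesis; this is exactly the structure of a complex symplectic space. Equivalently, $-i\omega_{A^*}$ is a non-degenerate Hermitian form, and one recovers an honest real symplectic structure by taking $\mathrm{Im}\,\omega_{A^*}$, for which the standard notions of isotropic and Lagrangian subspace apply verbatim.

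For (2), any closed extension $B$ with $\overline{A}\subset B\subset A^*$ is determined by $V_B:=D(B)/D(\overline{A})$ via $B=A^*|_{D(B)}$, and since $Bx=A^*x$ on $D(B)$ we have $\omega_B=\omega_{A^*}|_{D(B)\times D(B)}$, so $B$ is symmetric iff $V_B$ is isotropic. The crux is then to identify $V_{B^*}=V_B^{\perp_\omega}$: since $\overline{A}\subset B$ implies $B^*\subset (\overline{A})^*=A^*$, any $y\in D(B^*)$ lies in $D(A^*)$ with $B^*y=A^*y$, and the adjoint defining condition $(Bx,y)=(x,A^*y)$ for all $x\in D(B)$ rearranges exactly to $\omega_{A^*}(x,y)=0$. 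Thus $B=B^*$ iff $V_B=V_B^{\perp_\omega}$, i.e.\ iff $V_B$ is Lagrangian, and conversely any Lagrangian $L\subset \cD$ produces a self-adjoint extension via its preimage in $D(A^*)$. The main technical point requiring care is this identification $V_{B^*}=V_B^{\perp_\omega}$, which uses $\overline{A}\subset B$ to reduce adjoint-domain conditions to pairings against $\omega_{A^*}$; beyond this, the only minor subtlety is reconciling the complex sesquilinear form with the real symplectic language, handled once and for all by passing to $\mathrm{Im}\,\omega_{A^*}$.
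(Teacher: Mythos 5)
Your proof is correct, and it takes a genuinely different route from the one the paper (following \cite{sa_general}) has in mind. The paper's discussion sets up von Neumann's decomposition $D(A^*)=D(\overline{A})\oplus\cD_+\oplus\cD_-$ and the explicit diagonalization $\omega_{A^*}(x_++x_-)=2i(\|x_+\|^2-\|x_-\|^2)$, so that the theorem is obtained by identifying Lagrangian subspaces with graphs of unitaries $U:\cD_+\to\cD_-$ and invoking von Neumann's classification of self-adjoint extensions by such unitaries. You instead argue abstractly on the quotient: after checking that $\omega_{A^*}$ descends (via the conjugate skew-symmetry $\omega_{A^*}(y,x)=-\overline{\omega_{A^*}(x,y)}$ together with the preceding lemma), you use the tautological correspondence between operators $B$ with $\overline{A}\subset B\subset A^*$ and subspaces $V_B\subset D(A^*)/D(\overline{A})$, and the key identity $V_{B^*}=V_B^{\perp_\omega}$, which converts the adjoint-domain condition into an $\omega$-orthogonality condition; self-adjointness then reads $V_B=V_B^{\perp_\omega}$, which in a non-degenerate space is equivalent to maximal isotropy. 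This is more elementary and self-contained (no deficiency-space decomposition is needed, and non-degeneracy enters only to equate "maximal isotropic" with "$V=V^{\perp_\omega}$"), whereas the paper's route buys the concrete parametrization by unitaries between deficiency spaces that is used later in Section 4.2. Two small points you leave implicit but which are immediate: $D(\overline{A})\subset D(B^*)$ (since $B\subset A^*$ gives $\overline{A}=A^{**}\subset B^*$), so $V_{B^*}$ is indeed a subspace of the quotient; and a Lagrangian $V=V^{\perp_\omega}$ is automatically graph-closed, being an intersection of kernels of graph-continuous functionals, so the corresponding extension is a closed operator.
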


\subsection{Construction of self-adjoint extensions}

\label{subsec:sa_extensions}

We are now ready to construct the self-adjoint extensions of $\Delta - cS$ in a number of cases. We separate the discussion into two parts: when $\mu<0$ and when $0\leq \mu< 4$. Both cases are studied using an explicit characterization of the asymmetry form. 

Consider a tubular neighborhood around the singular set $\cZ=\{x=0\}$. From the proof of Theorem~\ref{thm:max_domain_full} it follows that when $\mu < 4$ the adjoint $D_{\max}(\Delta - cS)$ consists of functions
$$
u(x,y) = (u^r_+(x,y) + u^r_-(x,y) + u^l_+(x,y) + u^l_-(x,y))\chi(|x|) \mod D_{\min}(\Delta - cS), 
$$
where $\chi\in C^\infty(\mathbb{R})$ is a smooth cut-off function satisfying $\chi|_{|t|<1/2}\equiv 1$ and $\chi|_{|t|>1}\equiv 0$. Further, the coefficient functions are given by
$$
u^r_{\pm}(x,y):= 
\begin{cases}
u_{\pm}(x,y), & x > 0,\\
0, & x < 0.
\end{cases}
$$ 
and similarly
$$
u^l_{\pm}(x,y):= 
\begin{cases}
u_{\pm}(-x,y), & x < 0,\\
0, & x > 0.
\end{cases}
$$ 
Here $u_{\pm}$ are functions with asymptotic expansion~\eqref{eq:series_nondeger}. Assume first that their principal terms are smooth functions of $y\in \cZ$. Functions $u^{l,r}_{\pm}$ can have all principal coefficients different from each other. For the sake of brevity, we denote the coefficients of the principal term as $a_{\pm}^{r,l}$ omitting the zeros. Also since those functions are compactly supported in a small tubular neighborhood around $\cZ$ we extend them by zero.

Using the Green's identity we can write down explicitly the asymmetry form
\begin{align}
\omega_{(\Delta-cS)^*}(u,v) &= \int_{\R \times \cZ} (\overline u \Delta v - v\Delta \overline u) \omega = \lim_{\varepsilon \to 0+}\int_{|x| \geq \varepsilon} (\overline u \Delta v - v\Delta \overline u) \omega = \nonumber \\
&=\lim_{\varepsilon \to 0+} \left(-\int_{x=\varepsilon} (\overline{u}\p_x v - v \p_x \overline{u}) \sigma_\varepsilon - \int_{x=-\varepsilon}(\overline{u}(-\p_x)v - v (-\p_x) \overline{u}) \sigma_{-\varepsilon})\right), \label{eq:green}
\end{align}
where $\sigma_{\pm \varepsilon}$ 
are induced measures on submanifolds $\{x=\pm \varepsilon\}$. Note that all of them are diffeomorphic to $\cZ$. 

\subsubsection{Case $\mu < 0$}

In this case the indicial roots are complex conjugates $\lambda $ and $\bar \lambda$. For $x>0$ let us write the asymptotic expansions for $u$ and $v$:
\begin{align}
    u(x,y) &= a^r_-(y) x^{\bar \lambda} + a^r_+(y) x^{\lambda} + o(x^{\Re (\lambda)}), \label{eq:exp_u_1}\\
    v(x,y) &= b^r_-(y) x^{\bar \lambda} + b^r_+(y) x^{\lambda} + o(x^{\Re (\lambda)}), \label{eq:exp_v_1}
\end{align}
For $x<0$ we have a similar expression, which we omit. Assume for now that coefficients $a^r_\pm$ and $b^r_\pm$ are smooth functions on the singular set $\cZ$. We can then substitute these asymptotic expansions into~\eqref{eq:green}. From this we have
\begin{equation}
\label{eq:holder}
\int_{x=\varepsilon} (\overline{u}\p_x u - u \p_x \overline{u}) \sigma_\varepsilon = \int_{x=\varepsilon} 2i \Im (\lambda) \left( (\bar{a}^r_+b^r_+-\bar{a}^r_-b^r_-)  + o(1) \right)\varepsilon^{2\Re (\lambda) - 1}\sigma_\varepsilon(y).
\end{equation}
Next we write a similar expression for $x<0$ and take the limit when $\varepsilon \to 0+$. This limit exists by an application of the dominated convergence theorem since $2\Re (\lambda)- 1 = \alpha n$, and simultaneously 
$$
\sigma_\varepsilon = \varepsilon^{-\alpha n}\tilde{\sigma}_\varepsilon,
$$
leading to cancelation of the singular orders of $\varepsilon$. Here $\tilde{\sigma}_\varepsilon $ is the restriction of the companion volume form $\tilde \omega$ to $x=\varepsilon$, which can be viewed as a family of smooth volume forms on $\cZ$ with a well-defined limit at $\varepsilon \to 0$. Doing this procedure gives us the final formula for the assymetry form
$$
\omega_{(\Delta-cS)^*}(u,v) = i\sqrt{|\mu|} \int_{\cZ}\left( \bar{a}^r_+b^r_+-\bar{a}^r_-b^r_- + \bar{a}^l_+b^l_+-\bar{a}^l_-b^l_- \right) \tilde \sigma(y).
$$
We introduce new variables
$$
a_\pm = \begin{pmatrix}
   a^r_\pm\\
   a^l_\pm
\end{pmatrix}, \qquad
b_\pm = \begin{pmatrix}
   b^r_\pm\\
   b^l_\pm
\end{pmatrix},
$$
to shorten the last expression to
\begin{equation}
    \label{eq:form_mu0_L2}
    \omega_{(\Delta-cS)^*}(u,v) = i\sqrt{|\mu|} \left( \langle a_+,b_+\rangle_{L^2(\cZ)} - \langle a_-,b_-\rangle_{L^2(\cZ)}\right).
\end{equation}

Note that this expression can be extended to the case when these principal coefficients are functions in $L^2(\cZ)$. However, Theorem~\ref{thm:phg_Grushin} tells us that this is not enough, because these coefficients can lie in some negative order Sobolev spaces and there is no natural extension of the form above to a pair of such functions. Nevertheless, if either $a$ or $b$ is smooth, then this pairing indeed is well-defined. To see this, consider the operator $-\Delta_{0,\cZ}$, the regularized Laplacian operator on the singular set (see~\eqref{eq:local_expression_laplace}). This is a non-negative elliptic operator on a closed manifold and hence has a discrete spectrum $\lambda_1 \leq \lambda_2 \leq \dots$ with the corresponding eigenfunctions $\{\varphi_k\}_{k\in \mathbb{N}}$, which are smooth functions by the classical elliptic regularity. These functions $\varphi_k$, $k\in \N$ form an orthonormal basis of $L^2(\cZ)$ and we get the usual analogue of the classical Fourier theory. In particular, every element can be represented with respect to this basis as
$$
f(y) = \sum_{k=1}^\infty f_k\varphi_k(y)
$$
and one can characterize Sobolev spaces $H^s(\cZ)$ as distributions $f$ on $\cZ$ for which
$$
\sum_{k=1}^\infty (1+\lambda_k)^s |f_k|^2 < \infty.
$$
For smooth functions $f$ the series will converge for any $s\in \R$.

Now assume that $a_\pm$ and $b_\pm$ are smooth. Set $a_{\pm,k}$ and $b_{\pm,k}$ be their Fourier coefficients in the above sense. We can rewrite expression~\eqref{eq:form_mu0_L2} as

\begin{equation}
    \label{eq:form_mu0_fourier}
    \omega_{(\Delta-cS)^*}(u,v) = i\sqrt{|\mu|} \sum_{k=1}^\infty \left( \langle a_{+,k},b_{+,k}\rangle_{\C^2} - \langle a_{-,k},b_{-,k}\rangle_{\C^2}\right).
\end{equation}
The crux of the proof is that if we keep one of the functions smooth, then by a density argument, we can assume that the other function belongs to any $H^s(\cZ)$, as the corresponding series remains convergent. Hence the above formula holds as long as one of the two functions is assumed to be smooth. So we can proceed as follows. First we find a reasonable candidate for a self-adjoint extension, i.e. one that is represented by a linear subspace of $D_{max}(\Delta - cS)/D_{\min}(\Delta - cS)$. Then we prove that this subspace is in fact Lagrangian, by testing against functions with a smooth principal term via~\eqref{eq:form_mu0_fourier}.

Using the formula for the asymmetry form we are now ready to construct self-adjoint extensions of $\Delta - cS$. We start with the following lemma.
\begin{lem}
\label{lem:non-degenerate_form}
    For $\mu<0$ the form $\omega_{(\Delta-cS)^*}$ is non-degenerate on $D_{max}(\Delta - cS)/D_{\min}(\Delta - cS)$.
\end{lem}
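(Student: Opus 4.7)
The plan is to establish non-degeneracy by constructing, for every nonzero class $[u]$ in the quotient $D_{\max}(\Delta-cS)/D_{\min}(\Delta-cS)$, an explicit test element $v$ with $\omega_{(\Delta-cS)^*}(u,v)\neq 0$. The argument will rely crucially on the fact that when $\mu<0$ the indicial roots $\lambda_\pm = \tfrac{1}{2}(1+\alpha n \pm i\sqrt{|\mu|})$ are complex conjugates, so their difference $i\sqrt{|\mu|}$ does not lie in the real set $\Theta$ from~\eqref{eq:theta_def}. Consequently no logarithmic terms appear in the Frobenius expansion of elements of $\ker(\Delta - cS)$, and by Theorem~\ref{thm:max_domain_full} the quotient is identified with a direct sum of four independent coefficient spaces parametrizing $(a^r_+, a^r_-, a^l_+, a^l_-)$ in appropriate Sobolev spaces on $\cZ$; in particular, $[u]=0$ if and only if all four distributional coefficients vanish.

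Granted this parametrization, the plan proceeds as follows. Suppose $[u]\neq 0$; then at least one of the four coefficients is a nonzero distribution, and by symmetry I focus on the case $a^r_+ \neq 0$. Using the standard duality $H^s(\cZ)\times H^{-s}(\cZ)\to\C$ extending the $L^2$ pairing, together with density of $C^\infty(\cZ)$ in any Sobolev space, I pick $b^r_+\in C^\infty(\cZ)$ with $\langle a^r_+, b^r_+\rangle\neq 0$. I then construct $v\in D_{\max}(\Delta - cS)$ whose leading coefficient data is $(b^r_+, 0, 0, 0)$: since $b^r_+$ is smooth and no log terms appear, the Frobenius recursion from Subsection~\ref{subsec:adjoint} produces a formal polyhomogeneous solution of $(\Delta - cS)u_+=0$ with this leading behaviour, and Borel summation via Lemma~\ref{lem:borellemma} realizes it as an honest element of $\cA_{phg}$ modulo $\dot C^\infty$; multiplication by a cut-off $\chi(|x|)$ supported near $\cZ$ yields the desired $v$. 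The pairing formula~\eqref{eq:form_mu0_L2} then evaluates to
\[ \omega_{(\Delta-cS)^*}(u,v) = i\sqrt{|\mu|}\,\langle a^r_+, b^r_+\rangle\neq 0. \]
The three remaining cases are handled by analogous constructions with the appropriate slot activated, observing that nonvanishing of $a^r_-$ or $a^l_-$ yields a form value with the opposite sign, which is still nonzero.

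The main obstacle I anticipate is justifying the use of~\eqref{eq:form_mu0_L2} in the distributional regime allowed by Theorem~\ref{thm:phg_Grushin}, where the $a^{r,l}_\pm$ need only be negative order Sobolev distributions. This is precisely the issue addressed by~\eqref{eq:form_mu0_fourier} and the density remark immediately preceding the lemma: the Fourier series converges absolutely whenever at least one of the two arguments has smooth leading data, which is exactly the situation for the test vectors $v$ constructed above. Thus I would not define $\omega_{(\Delta-cS)^*}$ on arbitrary pairs of distributional coefficients, but only in the asymmetric regime (distribution, smooth test) that suffices for detecting non-vanishing and hence non-degeneracy.
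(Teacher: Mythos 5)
Your proposal is correct and follows essentially the same strategy as the paper: reduce to the four leading coefficients via Theorem~\ref{thm:max_domain_full}, construct a test element $v$ with smooth leading data (Frobenius recursion plus Borel summation), and evaluate the boundary pairing~\eqref{eq:form_mu0_L2}, which is legitimate precisely in the asymmetric regime where one argument is smooth. The only (immaterial) difference is that the paper selects the test data by isolating a single eigenfunction harmonic $\varphi_k$ and flipping signs so the form evaluates to $i\sqrt{|\mu|}(|a_{+,k}|^2+|a_{-,k}|^2)>0$, whereas you pick a smooth $b^r_+$ by Sobolev duality and density; both choices yield a nonzero pairing.
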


\begin{proof}
    It suffices to check that for any $u\in D_{max}(\Delta - cS)$ outside the closure there exists $v\in D_{max}(\Delta - cS)$, such that $\omega_{(\Delta-cS)^*}(u,v) \neq 0$. By our characterization of the adjoint, $u$ must have an asymptotic expansion of the form~\eqref{eq:exp_u_1}, with the principal coefficient being in some Sobolev space by Theorem~\ref{thm:phg_Grushin}. Since $u$ is not in the closure, there exists $k\in \N$ such that at least one of the $k$-th harmonics, $a_{+,k}$ or $a_{-,k}$, is not zero. Then we choose $v \in D_{max}(\Delta - cS)$ with an asymptotic expansion with principal term in~\eqref{eq:exp_v_1} given by $b^{r,l}_{\pm}(y) = \pm a^{r,l}_{\pm,k}\varphi_k(y)$, which exists by Lemma \ref{lem:borellemma}. Then we have
    \[ \omega_{(\Delta-cS)^*}(u,v) = i\sqrt{|\mu|} \left(|a_{+,k}|^2 + |a_{-,k}|^2 \right) > 0. \]
\end{proof}

Now we can construct some natural self-adjoint extensions of $\Delta - cS$.

\begin{proof}[Proof of Theorem~\ref{thm:sa_extensions_mu_leq_0}]

We start by noting that the bilinear form $\omega_{(\Delta-cS)^*}$ is completely determined by its quadratic form. We have from~\eqref{eq:form_mu0_fourier}
$$
\omega_{(\Delta-cS)^*}(u) = i\sqrt{|\mu|} \sum_{k=1}^\infty \left( |a_{+,k}|^2_{\C^2} - |a_{-,k}|^2_{\C^2}\right).
$$
So if $U:\C^2 \to \C^2$ is a unitary operator, we can consider a closed subspace $\Lambda_U$ of $D_{max}(\Delta - cS)$ determined by the relations $a_{+,k} = Ua_{-,k}$. First note that this space is isotropic. Indeed, by definition $\cA_{phg}(M\setminus \cZ)\cap\Lambda_U$ is dense in $\Lambda_U$. For this space we can use formula~\eqref{eq:form_mu0_fourier} to check that $\omega_{(\Delta-cS)^*}|_{\cA_{phg}(M\setminus \cZ)\cap\Lambda_U} = 0$. Thus, we can extend it to whole $\Lambda_U$ by continuity. It only remains to prove that this subspace is maximal. If it is not maximal, then there should exist $v \notin \Lambda_U$ such that $\omega_{(\Delta-cS)^*}(u,v) = 0$ for all $u\in \Lambda_U$. Inverting this statement we see that maximality will follow, if for any $v \in D_{max}(\Delta - cS)$ such that $v\notin \Lambda_U$, one can find $u\in \Lambda_U$ such that $\omega_{(\Delta-cS)^*}(u,v) \neq 0$. 

The proof of this fact is similar to the proof of Lemma~\ref{lem:non-degenerate_form}. If $v \notin \Lambda_U$, then there must exist at least one harmonic $b_k$ such that $b_{+,k} \neq Ub_{-,k}$. Choose $u \in \Lambda_U$ be a polyhomogeneous function whose first coefficient $a_-(y) = a_{-,k}\varphi_k(y)$ and $a_+(y) = Ua_{-,k}\varphi_k(y)$. Then we have
$$
\omega_{(\Delta-cS)^*}(u,v) = i\sqrt{|\mu|} \left( \langle a_{+,k},b_{+,k}\rangle_{\C^2} - \langle a_{-,k},b_{-,k}\rangle_{\C^2}\right).
$$
The quadratic form on the right is a symplectic form on $\C^4$, and the graph of $U$ given by $a_{+,k} = Ua_{-,k}$ determines a Lagrangian subspace $\Lambda_k$, because every isotropic subspace of dimension two is Lagrangian. But by construction, $(b_{+,k},b_{-,k})$ does not belong to $\Lambda_k$, and hence, by maximality there must exist $(Ua_{-,k},a_{-,k})$ for which the expression above is not zero.
\end{proof}

\subsubsection{Case $0\leq \mu<4$}

In this case, we have two real roots $\lambda_- < \lambda_+$. In addition, we work with assumptions of Theorem~\ref{thm:sa_extensions_mu_geq_0}. We use those assumptions, since we want to derive a formula analogous to~\eqref{eq:form_mu0_L2} and exploit a similar strategy for our proofs. To check that the limit~\eqref{eq:green} exists, we need an asymptotic expansion of an element in $\ker(\Delta -cS)^*$, (corresponding to the negative root $\lambda_-$), up to order of the positive root $\lambda_+$. Since $\lambda_+ - \lambda_- = \sqrt{\mu}<2$ we only need to consider powers of two more orders. If we go back to the derivation of the asymptotic expansion in Section~\ref{subsec:adjoint}, we see that since $\alpha>0$, all of the terms of interest will be multiples of the first term, because the first derivation in $y$ variables will be of order $2+2\alpha > 2$. Secondly, for simplicity, the second assumption implies that there roots are not separated by an element of the index set $\Theta$ to avoid log terms. Finally, the last assumption guarantees that we can apply the Fourier argument from the case $\mu<0$ with very little modification. 

All of this implies that we have the following assymptotic expansions for $x>0$:
\begin{align}
    u(x,y) &= a^r_-(y) q(x,y) x^{\lambda-} + a^r_+(y) x^{\lambda_+} + o(x^{\lambda_+}), \label{eq:exp_u_1}\\
    v(x,y) &= b^r_-(y) q(x,y) x^{\lambda-} + b^r_+(y) x^{\lambda_+} + o(x^{\lambda_+}), \label{eq:exp_v_1}
\end{align}
where $q(x,y)$ is a real polynomal in $x$ with $q(0,y) = 1$ and all the other coefficients depend on $\lambda_-$, asymptotic expansion of the scalar curvature $S$ and the divergence of $\p_x$ in such a way that $\p_x q(0,y) > 0$. 

As before, let us assume that the coefficients are smooth functions on the singular set. This asymptotic expression gives us after various simplifications for $x>0$:
$$
\bar u \p_x v - v \p_x \bar u = (\bar a^r_- b^r_+ - \bar a^r_+b^r_-)(q\lambda_+ - q \lambda_- - \p_x q)x^{\lambda_+ + \lambda_- - 1} + o(x^{\lambda_+ + \lambda_- - 1})
$$
Again, recall that $\lambda_+ + \lambda_- - 1 = \alpha n$. Hence, repeating the same procedure to the left of the singular set gives us via~\eqref{eq:green}the following formula:
    $$
\omega_{(\Delta-cS)^*}(u,v) = \int_\cZ (\bar a^r_- b^r_+ - \bar a^r_+b^r_-+\bar a^l_- b^l_+ - \bar a^l_+b^l_-)(\sqrt{\mu} - \p_x q(0,y))\tilde\sigma(y)
    $$
The justification is exactly as in the case $\mu<0$. By assumption $|\sqrt{\mu} - \p_x q|\tilde\sigma$ is a volume form on $\cZ$ coming from a conformal transformation of the companion metric $\cZ$. This new metric has an associated Laplace-Beltrami operator, and we can use this new Laplacian with eigenfunctions $\hat \varphi_k(y)$ to apply the harmonic analysis argument from the case $\mu<0$. Expanding in Fourier terms will result in a formula:
$$
\omega_{(\Delta-cS)^*}(u,v) = \sum_{k=1}^\infty \left(\langle a^r_{-,k}, b^r_{+,k}\rangle_\C - \langle  a^r_{+,k},b^r_{-,k}\rangle_\C+\langle  a^l_{-,k}, b^l_{+,k}\rangle_\C - \langle  a^l_{+,k},b^l_{-,k}\rangle_\C\right).
$$
Again, We see that this allows us to extend this formula to the case when one of the functions belongs to $D_{max}(\Delta - cS)$ while the other one remains smooth. This implies the non-degeneracy of the asymmetry form.

\begin{lem}
\label{lem:non-degenerate_form}
    Under the assumptions of Theorem~\ref{thm:sa_extensions_mu_geq_0} the form $\omega_{(\Delta-cS)^*}$ is non-degenerate on $D_{max}(\Delta - cS)/D_{min}(\Delta - cS)$.
\end{lem}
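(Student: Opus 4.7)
The plan is to mirror the proof of the analogous lemma in the $\mu<0$ case: reduce non-degeneracy to a harmonic-wise finite-dimensional fact by means of the pairing formula derived immediately above the statement, and realize the required test functions via Borel's lemma. The key linear-algebraic observation is that the harmonic-wise sesquilinear form
$$\omega_k(a,b) := \bar a^r_{-,k}\, b^r_{+,k} - \bar a^r_{+,k}\, b^r_{-,k} + \bar a^l_{-,k}\, b^l_{+,k} - \bar a^l_{+,k}\, b^l_{-,k}$$
on $\C^4\times \C^4$ is non-degenerate (it is a direct sum of two copies of the standard sesquilinear symplectic pairing on $\C^2$), and the full asymmetry form is $\sum_k \omega_k$.

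Concretely, I would start with $u\in D_{max}(\Delta-cS)$ whose class in $D_{max}/D_{min}$ is nonzero. By Theorem~\ref{thm:max_domain_full} together with Theorem~\ref{thm:phg_Grushin}, $u$ admits the weak polyhomogeneous expansion with principal coefficients $a^{r,l}_\pm$ in some Sobolev space on $\cZ$, not all of them $D_{min}$-equivalent to zero. Expanding in the basis $\{\hat\varphi_k\}$ of $\hat\Delta_{0,\cZ}$, at least one Fourier coefficient, say $a^r_{+,k_0}$, is nonzero. Lemma~\ref{lem:borellemma} then produces $v\in \cA_{phg}\cap D_{max}(\Delta-cS)$ with prescribed smooth principal coefficients $b^r_{-}=\hat\varphi_{k_0}$ and $b^r_{+}=b^l_{\pm}=0$; the pairing formula collapses to $\omega_{(\Delta-cS)^*}(u,v) = -\bar a^r_{+,k_0}\neq 0$. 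The three other cases (nonzero coefficient being $a^r_{-,k_0}$, $a^l_{+,k_0}$, or $a^l_{-,k_0}$) are handled identically by the symmetric choices of a single non-vanishing $b$.

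The main obstacle, and the step requiring most care, is the same subtlety already encountered for $\mu<0$: the pairing formula is derived under the hypothesis that both $u$ and $v$ have smooth principal coefficients, whereas in our reduction $u$ only has Sobolev-regular coefficients. I would justify its extension to the pair (rough $u$, smooth $v$) by the density argument sketched just after the derivation of the formula: with $v$ supported on the single smooth harmonic $\hat\varphi_{k_0}$, only a finite piece of the Fourier series contributes, and the pairing extends continuously to any $u$ whose coefficients lie in the negative-order Sobolev spaces permitted by Theorem~\ref{thm:phg_Grushin}. The assumptions (1)--(3) of Theorem~\ref{thm:sa_extensions_mu_geq_0} enter precisely to enable the derivation used in the previous subsection: they suppress logarithmic terms, force the relevant subleading powers to be scalar multiples of the leading one so that only the principal coefficient survives in the boundary integral, and render $h = \sqrt{\mu} - \partial_x q(0,y)$ sign-definite so that $|h|\,\tilde\sigma$ is the Riemannian measure with respect to which $\{\hat\varphi_k\}$ is orthonormal; without sign-definiteness the argument would fail on the zero set of $h$.
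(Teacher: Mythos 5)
Your proof is correct and follows essentially the same route as the paper: non-degeneracy is reduced via the Fourier expansion in the $\hat\varphi_k$ basis to a harmonic-wise statement, a polyhomogeneous test function $v$ is produced with prescribed smooth principal coefficients, and the pairing formula is extended to the (rough $u$, smooth $v$) case by density. The only cosmetic difference is your choice of $v$ (a single non-vanishing coefficient targeting the nonzero harmonic, giving $-\bar a^r_{+,k_0}$) versus the paper's sign-flipped choice $b^{r,l}_{\pm}=\pm a^{r,l}_{\mp,k}\varphi_k$ yielding a sum of squares; both work equally well.
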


\begin{proof}
    As before it is enough to check that for any $u\in D_{max}(\Delta - cS)$ which does not lie in the closure, there exists $v\in D_{max}(\Delta - cS)$, such that $\omega_{(\Delta-cS)^*}(u,v) \neq 0$. Assume that $u\in D_{max}(\Delta - cS)$ is not in the closure. Then there exists $k\in N$ such that at least one of the $k$-th harmonics $a^{r,l}_{+,k}$is not zero. Then we choose $v \in D_{max}(\Delta - cS)$ that has polyhomogeneous asymptotics with principal term in~\eqref{eq:exp_v_1} given by $b^{r,l}_{\pm}(y) = \pm a^{r,l}_{\pm,k}\varphi_k(y)$. This gives 
    $$
\omega_{(\Delta-cS)^*}(u,v) = |a^r_{+,k}|^2_\C + |a^r_{-,k}|^2_\C + |a^l_{+,k}|^2_\C + |a^l_{-,k}|^2_\C > 0.
    $$
\end{proof}

Now we are ready to construct self-adjoint extensions. In order to guess how they might look, we introduce the following variables:
\begin{align*}
    A_{1,k} &= \begin{pmatrix}
    a^r_{+,k} + ia^r_{-,k}\\
    a^l_{+,k} + ia^l_{-,k}
\end{pmatrix},
&
A_{2,k} &= \begin{pmatrix}
    a^r_{+,k} - ia^r_{-,k}\\
    a^l_{+,k} - ia^l_{-,k}
\end{pmatrix}.
\\
    B_{1,k} &= \begin{pmatrix}
    b^r_{+,k} + ib^r_{-,k}\\
    b^l_{+,k} + ib^l_{-,k}
\end{pmatrix},
&
B_{2,k} &= \begin{pmatrix}
    b^r_{+,k} - ib^r_{-,k}\\
    b^l_{+,k} - ib^l_{-,k}
\end{pmatrix}.
\end{align*}

Then we get
$$
\omega_{(\Delta-cS)^*}(u) =  \frac{i}{2}\sum_{k=1}^\infty(|A_{1,k}|^2_{\C^2}-|A_{2,k}|^2_{\C^2}).
$$
As before choose a unitary operator $U:\C^2 \to \C^2$. Then we can define a closed subspace $\Lambda_U$ in $\omega((\Delta - cS)^*)$ by gluing the $k$-th harmonics via the relation $A_{2,k} = UA_{1,k}$. The asymmetry form on $\Lambda_U$ intersected with functions that have polyhomogenenous asymptotics is zero. By a density argument we see that it will be zero on all $\Lambda_U$. Hence $\Lambda_U$ will be isotropic. Similarly to the case $\mu<0$ we get the proof of Theorem~\ref{thm:sa_extensions_mu_geq_0}.

\begin{proof}[Proof of Theorem~\ref{thm:sa_extensions_mu_geq_0}]
The proof follows exactly the same lines as the proof of the case $\mu<0$. We have already seen that the asymmetry form is non-degenerate and that $\Lambda_U$ is isotropic. It only remains to check that it is Lagrangian. Let $\pi_k$ be the map that takes a function in $u \in D_{max}(\Delta - cS)$ and gives the $k$-th harmonics of the principal term. However, it is clear that $\omega_{(\Delta - cS)^*}(\pi_k \cdot, \pi_k \cdot)$ is a symplectic form on $\C^4$ and $\pi_k \Lambda_U$ is a Lagrangian space by construction. 

Let $u\notin \Lambda_U$. Choose a $k\in \N$, such that $\pi_k u \notin \pi_k \Lambda_U$. Then there must exist $v\in \pi_k \Lambda_U$, such that $\omega_{(\Delta - cS)^*}(\pi_k u, v) \neq 0$. Extend $v$ to $\hat v\in D_{max}(\Delta - cS)$ by taking all of the remaining harmonics of the principal term to be zero. Then $\hat{v}$ is represented by a polyhomogeneous function and we can use the formula for the asymmetry form to show that
$$
\omega_{(\Delta - cS)^*}(u, \hat v) = \omega_{(\Delta - cS)^*}(\pi_k u, v) \neq 0,
$$
which proves that $\Lambda_U$ must be maximal.
\end{proof}

\section{Construction of the $\alpha$-calculus}

\label{sec:proofs}

\subsection{Triple-stretched product}

The double-stretched product is used to lift Schwartz kernels of operators adapted to the geometry of manifolds and in the construction of a suitable class of pseudo-differential operators. The triple-stretched product is needed to understand the mapping properties in this pseudo-differential calculus. Similarly to $M^2_\alpha$ it is built with a series of blow-ups. As in Subsection~\ref{subsec:double_prod} we assume that $\alpha\geq 0$. The case $-1<\alpha <0$ is handled similarly.

On $M^3$ we have the three projections $\pi_L,\pi_M,\pi_R$ to the left, middle and right factors. The space $M^3_\alpha$ should factor smooth maps $\pi^\Lambda_{LR},\pi^\Lambda_{LM},\pi^\Lambda_{MR}: M^3_\alpha \to M^2_\alpha$ obtained by essentially projecting to the two of three factors, i.e., if $\beta_\alpha^{(i)}: M_\alpha^i \to M^i$ are blow-down maps, then the following diagram should be commutative
\begin{equation}
\label{diag:triple_commute}
    \begin{tikzcd}
	{M_\alpha^3} &&& {M_\alpha^2} \\
	\\
	{M^3} &&& {M^2}
	\arrow["{\beta_\alpha^{(3)}}"', from=1-1, to=3-1]
	\arrow["{\pi_O}", from=3-1, to=3-4]
	\arrow["{\pi_O^\Lambda}", from=1-1, to=1-4]
	\arrow["{\beta_\alpha^{(2)}}", from=1-4, to=3-4]
\end{tikzcd}
\end{equation}

\noindent where $O\in \{LM,MR,LR\}$.

In order to do this, consider the three sets $(\p \Delta)_O = \pi_O^{-1}(\partial \Delta)$, $O\in \{LM,MR,LR\}$ inside $M^3$. To each set we can associate one of the forms
\begin{align*}
\omega_{LM} &= \pi_L^*\omega - \pi_M^*\omega,\\
\omega_{MR} &= \pi_M^*\omega - \pi_R^*\omega,\\
\omega_{LR} &= \pi_L^*\omega - \pi_R^*\omega.
\end{align*}
Thus we need to blow-up each of those diagonal using filtrations $\cF_O$
$$
\{0\}\subset \ker \omega_O \subset N(\p\Delta)_O.
$$
The problem is that the three sets $(\p\Delta)_O$ actually meet at the diagonal at $\p\Delta_T = \{(p,p,p):p\in \p\Delta\}$, where a different filtration must be chosen. Namely, we must take the filtration $\cF_T$
 $$
\{0\}\subset \bigcap_{O\in \{LM,MR,LR\}}\ker \omega_O \subset N(\p\Delta_T).
$$
To make everything compatible we assign to $\ker \omega_O$ and $\bigcap\ker \omega_O$ order $1$, while for $N(\p\Delta)_O$ and $N(\p\Delta_T)$ order $1+\alpha$, which gives the same order vector $\Lambda = (1,1+\alpha)$. 

The triple stretched product $M^3_\alpha$ is obtained by blowing up iteratively $M_3$ first along $\p\Delta_T$, and then along the lifts of $\p\Delta_O$, $O\in \{LM,MR,LR\}$. Putting this in formulas we get

\begin{definition}
The stretched triple product is defined as the iterated blow-up
$$
M_3^\alpha = \left[[M^3;\p \Delta_T]_{(\cF_T,\Lambda)}; \overline{\p\Delta_{LM}}\cup \overline{\p\Delta_{MR}}\cup \overline{\p\Delta_{LR}} \, \right]_{\{(\cF_{LM},\cF_{MR},\cF_{LR}\},\Lambda)},
$$
where $\overline{\p\Delta_O}$ is the closure of the preimage of $\p\Delta_O$ under the blow-down of the blow-up of $\p\Delta_T$.
\end{definition}

The commutativity of the diagram~\eqref{diag:triple_commute} is a direct corollary of the commutativity of blow-ups in this case:
\begin{thm}[\cite{thesis}, Theorem 3.3]
Let $X$ be a smooth manifold and $Z\subset Y \subset X$ be two nested $p$-submanifolds with two blow-up data $(\cF_Y,\Lambda_Y)$, $(\cF_Z,\Lambda_Z)$. Assume that $\cF_Z$ is the restriction of $\cF_Y$ to the normal bundle $NZ$. Then the identity map on $X$ lifts to a diffeomorphism
$$
[[X;Z]_{(\cF_Z,\Lambda_Z)};Y]_{(\cF_Y,\Lambda_Y)} = [[X;Y]_{(\cF_Y,\Lambda_Y)};Z]_{(\cF_Z,\Lambda_Z)}.
$$
\end{thm}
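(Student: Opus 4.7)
The plan is to reduce the global statement to a local normal-form computation, since blow-ups are local around the submanifolds being blown up and both spaces on either side of the claimed identity coincide with $X\setminus Y$ elsewhere. Near a point of $Z$, I would choose local coordinates $(u,v,w)$ adapted to the nested pair, so that $Y=\{u=0\}$ and $Z=\{u=0,\ v=0\}$; such coordinates exist because both $Y$ and $Z$ are $p$-submanifolds of $X$ (Definition~\ref{def:p-manifold}). In these coordinates $N(Y)$ is spanned along $Y$ by the $u$-directions and $N(Z)$ is spanned along $Z$ by the $(u,v)$-directions; the subbundle $N(Y)|_Z \subset N(Z)$ is therefore the $u$-part of $N(Z)$. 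The hypothesis that $\cF_Z$ restricts to $\cF_Y$ along this subbundle means the weights that $\Lambda_Y$ assigns to each $u_i$-direction agree with those $\Lambda_Z$ assigns to the same direction inside $N(Z)$.

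Next I would write explicit quasi-projective charts using formula~\eqref{eq:projective_coordinates} on both iterated blow-ups. On $[[X;Y]_{(\cF_Y,\Lambda_Y)};Z]_{(\cF_Z,\Lambda_Z)}$, the first blow-up rescales the $u$-directions by the weights of $\Lambda_Y$; the lift of $Z$ is then a $p$-submanifold whose normal directions are the lifted $v$-directions together with the quasi-spherical coordinate produced by the first blow-up, and the second blow-up rescales all of these by $\Lambda_Z$. On $[[X;Z]_{(\cF_Z,\Lambda_Z)};Y]_{(\cF_Y,\Lambda_Y)}$ one performs the two rescalings in the opposite order. Because $\cF_Z$ restricts to $\cF_Y$ on the common $u$-directions, the composite transition between the two families of projective charts is a rearrangement of quasi-homogeneous scalings whose exponents are rational combinations of the common weights. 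A chart-by-chart comparison, explicit but routine and essentially the same as for spherical blow-ups in the integer-weight case, shows that each transition is a smooth diffeomorphism of the corresponding corner neighborhood. Since both iterated spaces contain $X\setminus Y$ as an open dense subset on which the identity extends canonically to a bijection of the quasi-spherical normal bundles over $Y$ and $Z$, this chart-level smoothness upgrades to a global diffeomorphism.

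The main obstacle, already flagged by the authors for non-integer weights, is that the blow-down maps themselves are only continuous and not smooth, so one cannot simply invoke the universal property of the smooth identity on the interior and extend by continuity. Smoothness of the commuting diffeomorphism must therefore be checked by hand in each coordinate chart near the three front faces. The compatibility hypothesis, together with the matching of the weight vectors $\Lambda_Y$ and $\Lambda_Z$ on the common directions, is precisely what prevents new irrational exponents from appearing in the transition maps beyond those already present, reducing the verification to a finite list of explicit checks formally identical to the integer-weight case treated in~\cite{thesis}.
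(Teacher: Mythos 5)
The paper itself offers no proof of this statement: it is quoted verbatim from \cite{thesis} (Theorem 3.3), and the authors only remark that the thesis proves it in the greater generality of cleanly intersecting submanifolds. So the comparison here is between your sketch and the strategy of the cited reference, and your strategy is indeed the standard one for commutation of blow-ups: localize to a normal form $(u,v,w)$ with $Y=\{u=0\}$, $Z=\{u=0,v=0\}$, identify the lifted centre and its induced blow-up datum in each order, and check smoothness of the transition between the two families of quasi-projective charts. That is the right architecture, and your identification of where the compatibility hypothesis enters (the weights on the shared $u$-directions must agree, so that no new exponents appear in the transitions) is the correct conceptual point.

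The gap is that the step you label ``explicit but routine'' is the entire content of the theorem, and for non-integer $\Lambda$ it is not obviously routine. The projective charts \eqref{eq:projective_coordinates} involve powers $|x_i|^{\lambda_j/\lambda_i}$ with generally irrational exponents, and such functions are smooth only where the base does not vanish; smoothness of a composite transition map therefore depends on verifying that every fractional power that appears is taken of a coordinate that is bounded away from zero on the overlap of the two charts being compared, or else cancels. Your sketch never writes down a single transition map between a chart of $[[X;Y];Z]$ and a chart of $[[X;Z];Y]$, so the claim that the exponents are ``rational combinations of the common weights'' (they need not be rational at all --- they are ratios of the $\lambda$'s) and that smoothness follows ``as in the integer case'' is asserted rather than demonstrated. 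A second, smaller omission: in the order $[[X;Y];Z]$ the lift of $Z$ sits inside the front face of $[X;Y]$ (since $Z\subset Y$, the set $Z\setminus Y$ is empty and the lift must be taken as the full preimage $\beta^{-1}(Z)$), and you need to say what filtration and weight vector the datum $(\cF_Z,\Lambda_Z)$ induces on its normal bundle, which now includes the front-face direction $\partial_r$; without pinning this down one cannot even state which blow-up is being performed second, let alone compare it with the other order. Filling in these two items --- the induced data on the lifted centres, and one explicit chart transition in each combinatorial configuration --- would turn your outline into a proof.
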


\begin{remark}
In~\cite{thesis} this theorem is proven in a more general setting of clean intersections. But for our purposes, this level of generality is more than enough.
\end{remark}

The triple product has seven faces. Three of them are lifts of the interiors of $\p M \times M^2$, $M \times \p M \times M$, $\p M \times M^2$ which are denoted by $\cB_{100}$, $\cB_{010}$, $\cB_{001}$. Three more are blow-up of $\p \Delta_ {LM}$, $\p \Delta_ {MR}$, $\p \Delta_ {LR}$ which are denoted by  $\cB_{110}$, $\cB_{011}$, $\cB_{101}$. Finally the last face $\cB_{111}$ is the blow up of $\p \Delta_T$.

\begin{figure}[h]
\centering
\includegraphics[scale=.35]{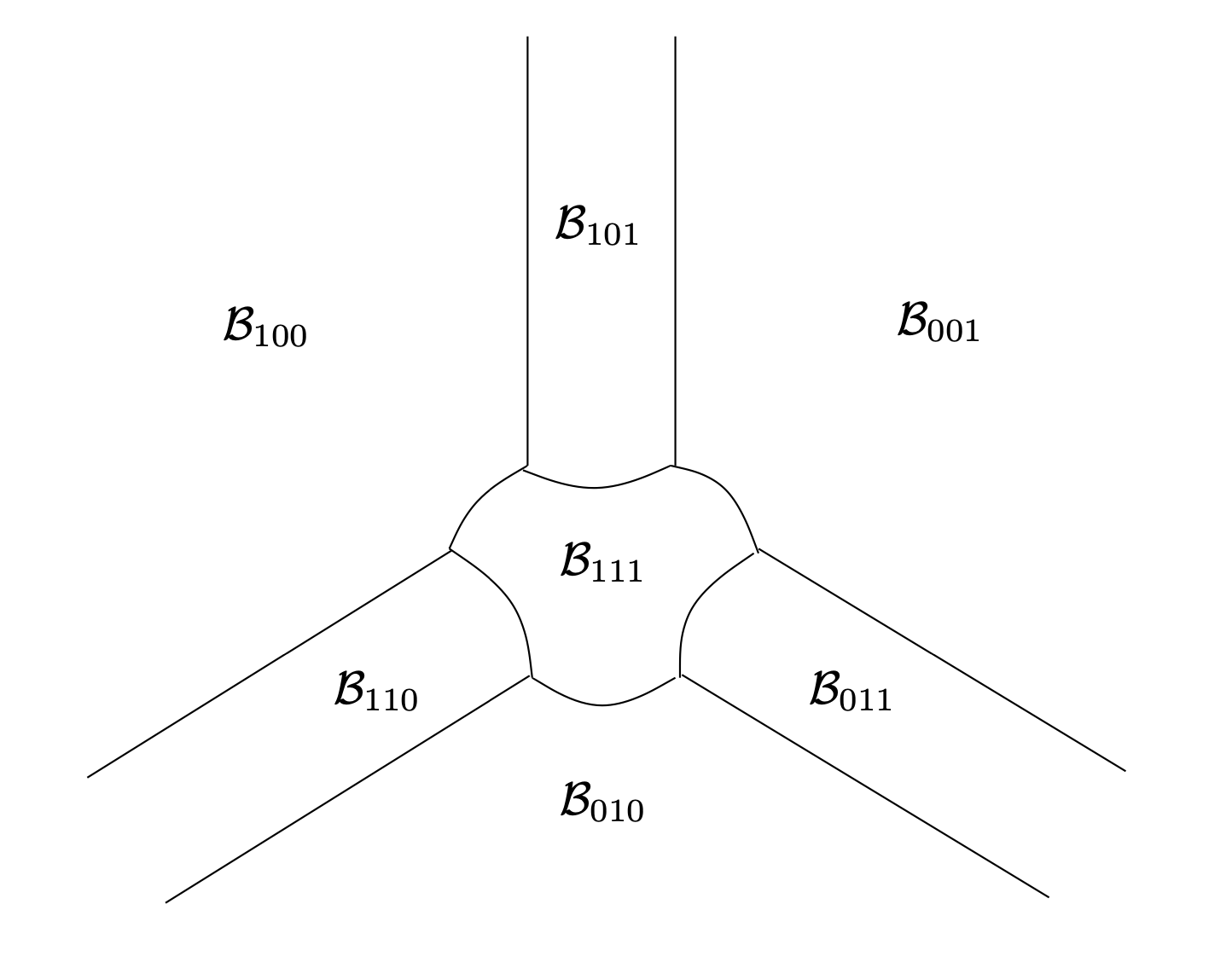}
\caption{The triple space $M_\alpha^3$}
\end{figure}

\subsection{Mapping properties}

We can lift operators which act on functions on $M$ to operators which act on functions on $M_\alpha$, through their kernels. Before explaining the exact procedure, we need a few lemmas concerning the blow-down maps.

\begin{lem}
\label{lemm:blow-up_matrix}
Let $S \subset M$ be a $p$-manifold with blow-up data $(\cF,\alpha)$. Let $B'_i$, $i=1,\dots, q$ be boundary hypersurfaces of $M$ and $r'_i$ be their boundary defining functions. 

The blow-down map is a $b$-map, i.e. we have 
$$
\beta^* r'_i = f\prod_{j=0}^q r_j^{e(i,j)},
$$
where $r_j$ are defining function of boundary hypersurfaces $\cB_j$ of $[M;S]_{(\cF,\alpha)}$ and $f\neq 0$ is a smooth function on $[M;S]_{(\cF,\alpha)}$. In particular, we assume that $r_0$ is the defining function of the front face. More explicitly
$$
e(i,j)=\begin{cases}
0, & i\neq j, j\neq 0,\\
1, & i=j,\\
d(i), & j=0.
\end{cases}
$$
Here $d(i) = 0$ if $\cB_i\cap N = \emptyset$, and $d(i) = \alpha_l$ if $(\cB_i)^\perp \cap V_l \neq \{0\}$, $(\cB_i)^\perp \cap V_{l-1} = \{0\}$, where $V_l$ are the sub-bundles of filtration $\cF$. 
\end{lem}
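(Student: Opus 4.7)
The plan is to reduce to a local computation in projective coordinates near the front face and perform a case analysis on whether $\cB'_i$ meets the submanifold $S$. The claim is clearly local, and away from $\beta^{-1}(S)$ the blow-down map is a diffeomorphism. Hence, any $\cB'_i$ with $\cB'_i\cap S=\emptyset$ has a unique lift, namely the closure of $\beta^{-1}(\cB'_i\setminus S)$, which is a single boundary hypersurface $\cB_i$ of $[M;S]_{(\cF,\Lambda)}$ with $\beta^* r'_i = f\cdot r_i$ for a smooth non-vanishing $f$; this already yields the claimed $e(i,i)=1$, $e(i,0)=0$, and $e(i,j)=0$ for $j\neq 0,i$ in this case.

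For $\cB'_i$ meeting $S$, I would work in local coordinates $(x_1,\dots,x_n)$ on $M$ adapted to the filtration $\cF$, so that $\cB'_i$ is given locally by $\{x_k=0\}$ with the direction of $x_k$ lying in some subspace $W_l$ of weight $\lambda_l$. By definition of $\cF$, the conormal condition $(\cB'_i)^\perp\cap V_l\neq\{0\}$, $(\cB'_i)^\perp\cap V_{l-1}=\{0\}$ singles out precisely this weight, so $d(i)=\lambda_l$. In a projective chart of the form~\eqref{eq:projective_coordinates} centred at a suitable point of the front face, the defining function of the front face has the form $r_0^{\lambda_m}=\pm x_m$ for a distinguished coordinate $x_m$, while the pullback of $x_k$ takes the form $\beta^* x_k = \xi_k\cdot r_0^{\lambda_l}$ when $k\neq m$ (with $\xi_k$ a new coordinate) and $\beta^* x_k = \pm r_0^{\lambda_l}$ when $k=m$. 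In the first case $\xi_k$ either defines the proper transform $\cB_i$ of $\cB'_i$ in this chart or is non-vanishing there, while in the second case the proper transform is disjoint from this chart; all cases yield a factorisation $\beta^* r'_i = f\cdot r_0^{\lambda_l}\cdot r_i^{e(i,i)}$ consistent with the stated lifting matrix.

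To produce the global statement, I would patch these local computations, checking that the ambiguous multipliers $f$ between adjacent charts differ by smooth non-vanishing transition functions. Once this is done, $e(i,i)=1$ is forced by the fact that the proper transforms in individual charts glue to a single boundary hypersurface of $[M;S]_{(\cF,\Lambda)}$, while $e(i,j)=0$ for $j\neq 0,i$ is automatic since no other boundary hypersurface appears in the chart expansion of $\beta^* r'_i$. The only real subtlety, and the main obstacle I expect, is bookkeeping of the non-smoothness of $\beta$ at the front face when the weights $\lambda_l$ are non-integer: the factor $r_0^{\lambda_l}$ is merely continuous, which forces $f$ to be continuous non-vanishing rather than smooth. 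This is however exactly what the paper's relaxed definition of $b$-map accommodates, and the integer-weight analogue carried out in~\cite{thesis} adapts without essential change.
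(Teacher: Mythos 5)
Your proposal is correct and follows essentially the same route as the paper's proof: reduce to a local computation in adapted coordinates, use the projective charts \eqref{eq:projective_coordinates} to read off $\beta^* x_j = \xi_j\,r_0^{\mathrm{ord}(x_j)}$ for defining functions meeting the centre of the blow-up, and observe that defining functions disjoint from it lift trivially. The extra care you take about patching charts and about the loss of smoothness of $r_0^{\lambda_l}$ for non-integer weights is consistent with the paper's relaxed definition of $b$-map and does not change the argument.
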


\begin{proof}
Since $S$ is a $p$-submanifold, we can find a local system of coordinates for which $S$ coincides with $L_I$ from Definition~\ref{def:p-manifold}. Hence we get local coordinates $(y,x)$, such that $S$ is identified with $x=0$. Moreover, With a further change of variables, we can identify the quotients $V_i/V_{i-1}$ of the filtration $\cF$ with $L_J$ for some index sets $J$. Now we can construct projective coordinate charts~\eqref{eq:projective_coordinates} given by $(y,\xi)$. Since we are interested in the pull-backs, we invert this change of coordinates to find for a fixed index $i$
$$
x_i = \xi_i^{\text{ord}(x_i)}, \qquad x_j = \xi_j\xi_i^{\text{ord}(x_j)},
$$
for $j\neq i$ and $x_i\neq 0$. Note that $\xi_i $ corresponds to a boundary defining function on the front-face in this coordinate chart. So assume that $x_j$ is a boundary defining function of the face $\cB_j$. Then clearly, from the formulas above we have $e(j,j)=1$, $e(j,0)=\text{ord}(x_j)$ and the remaining ones are zero. If $y_j$ is a defining function of $\cB_j$, then it lifts simply to $y_j$ and hence $e(j,j)=1$, while the remaining ones are zero. 
\end{proof} 

Spherical coordinates allow us to prove the following transformation law for the densities.

\begin{lem}
\label{lemm:densities}
Let $M$ be a manifold, $S$ be a $p$-submanifold and $(\cF,\alpha)$ the blow-up data. If $\omega \in \Omega(M)$, then
$$
\beta^* \omega = r^{s-1} \sigma 
$$
where $r$ is the defining boundary function for the front face, $\sigma$ is a continuous section of $\Omega([M;S]_{(\cF,\alpha)})$, whose restriction to the interior is smooth and
$$
s = \sum_{i = 1}^n \alpha_i (\dim V_i - \dim V_{i-1}).
$$ 
\end{lem}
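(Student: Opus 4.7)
The approach is to work locally and directly evaluate the Jacobian of $\beta$ in the projective coordinates \eqref{eq:projective_coordinates}. Since the statement is local, I would choose coordinates $(y,x)\in \R^m\times \R^n$ near a point of $S$ in which $S=\{x=0\}$ and the filtration $\cF$ is aligned with a decomposition $\R^n = W_1\oplus \cdots \oplus W_k$, where $W_i$ has weight $\alpha_i$ and $d_i := \dim W_i = \dim V_i - \dim V_{i-1}$. Since the tangential variables $y$ are unaffected by the blow-up, and since $\omega$ differs from $dy\wedge dx$ locally by a smooth factor whose pullback to the blow-up is smooth (as $\beta$ is smooth in the interior and any smooth function on $M$ lifts continuously to the blow-up), the real content of the lemma is the computation of $\beta^*(dx_1\wedge\cdots\wedge dx_n)$.

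Working in the projective chart centered on a coordinate $x_{j_0}$ of weight $\alpha_{i_0}$, one has $x_{j_0} = \xi_{j_0}^{\alpha_{i_0}}$ and $x_j = \xi_j\,\xi_{j_0}^{\alpha_{i(j)}}$ for $j\neq j_0$, where $\alpha_{i(j)}$ denotes the weight of the block containing $x_j$. Ordering the $\xi$-variables so that $\xi_{j_0}$ comes first, the Jacobian matrix is lower triangular with diagonal entries $\alpha_{i_0}\xi_{j_0}^{\alpha_{i_0}-1}$ and $\xi_{j_0}^{\alpha_{i(j)}}$ (for $j\neq j_0$); collecting powers of $\xi_{j_0}$ gives
\[
\left|\det \tfrac{\partial x}{\partial \xi}\right| = \alpha_{i_0}\,\xi_{j_0}^{s-1}, \qquad s=\sum_{j=1}^n \alpha_{i(j)} = \sum_{i=1}^k \alpha_i(\dim V_i - \dim V_{i-1}),
\]
which is exactly the claimed exponent. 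Since $\xi_{j_0}$ is a defining function of the front face in this chart, one reads off $\beta^*\omega = r^{s-1}\sigma$ with $\sigma$ smooth in this chart.

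It remains to glue these chart-wise densities into a global continuous section of $\Omega([M;S]_{(\cF,\alpha)})$. By Lemma~\ref{lemm:blow-up_matrix}, the defining function $r$ of the front face is globally well-defined up to a positive smooth factor, so $r^{s-1}$ is an unambiguously defined continuous function on the blow-up, smooth in the interior; hence $\sigma := \beta^*\omega / r^{s-1}$ is an intrinsically defined density, smooth on the interior since any two projective charts transition smoothly there. The main (mild) obstacle is verifying continuity at front-face points lying in the overlap of two different projective charts, where the coordinate transition involves non-integer powers of the weights and so is merely continuous, not smooth --- precisely the regularity asserted by the lemma. This final continuity follows from comparing the chart-wise expressions for $\sigma$ via the lifting relations of Lemma~\ref{lemm:blow-up_matrix}, which exhibit the transition factors as positive continuous functions on the blow-up.
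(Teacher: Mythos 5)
Your proof is correct, but it takes a different route from the paper's. The paper works in the spherical coordinates \eqref{eq:spherical_coordinates}: writing $\omega = a\,dy\,dx_1\cdots dx_k$ with $x_i$ the block of coordinates for $V_i/V_{i-1}$ and substituting $x_i = r^{\alpha_i}\theta_i$ gives $\beta^*\omega = r^{s-1}\,dy\,dr\,d\theta$ in one step, with $s=\sum_i\alpha_i\dim(V_i/V_{i-1})$ read off immediately from the radial scaling. Because the polar chart covers an entire neighborhood of the front face at once, no gluing over a chart cover is needed, which is what makes the paper's version a three-line computation. Your projective-chart computation of the triangular Jacobian, yielding $\bigl|\det\tfrac{\partial x}{\partial\xi}\bigr| = \alpha_{i_0}\,\xi_{j_0}^{s-1}$, is equally valid and arguably more explicit, at the cost of having to patch the chart-wise answers together. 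One small imprecision in that last step: the transition maps \emph{between} two projective charts of $[M;S]_{(\cF,\Lambda)}$ are smooth on their overlaps (they define the same smooth structure as the polar chart), so they are not the source of the loss of regularity. What limits $\sigma$ to being merely continuous at the front face is the factor $a\circ\beta$ coming from the smooth coefficient of $\omega$, since $\beta$ itself is only continuous there for non-integer weights — a point you do make correctly earlier in your argument, so the conclusion stands.
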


\begin{proof}
    As in the proof of the previous lemma, we choose local coordinates $(y,x_1,\dots, x_k)$ in a way, such that $S$ is identified with $x_i = 0$, $1\leq i \leq k$ and $V_i/V_{i-1}$ with $y=0$, $x_j=0$ for $j\neq i$. Note that unlike in the previous lemma, here $x_i$ is a set of coordinates for $V_i/V_{i-1}$  and not individual components. We fix a reference non-vanishing form $\omega \in \Omega(M)$, which in those coordinates locally is of the form
    $$
    \omega = dy dx_1 \dots  dx_k.
    $$
    Now we can simply apply the spherical change of variables~\eqref{eq:spherical_coordinates}, by taking $x_i =r^{\alpha_i}\theta_i$. After straight-forward calculations we obtain 
    $$
    \beta^*\omega = r^{s-1} dy dr  d\theta, 
    $$
    where $d\theta$ is a smooth density on the level set $r=1$ and
    $$
    s = \sum_{i = 1}^n \alpha_i \dim (V_i/V_{i-1}),
    $$
    which finishes the proof.
\end{proof}

Let us now see how the kernels of operators on $M^2$ are lifted to $M^2_\alpha$. Pick a non-vanishing section $\gamma_\alpha$ of $\Omega_{\alpha}$. It induces a non-vanishing section of the density bundle $\Omega(M^2_\alpha)$:
\begin{equation}
\label{eq:density}
\mu_\alpha = \beta^*_L (\gamma_\alpha)\beta^*_R (\gamma_\alpha)
\end{equation}
where $\beta_{L,R} = (\beta_\alpha^2)^* \circ \pi_{L,R} $.

Given an operator $A\in \Psi^{s,\cE}_\alpha(M)$ we can write its kernel as $\cK_A \sqrt{\mu_\alpha}$. Then the action of $A$ on $\Omega^{1/2}_\alpha(M)$ is given by
\begin{equation}
\label{eq:op_action}
A (u \sqrt{\gamma_\alpha}) = (\beta_L)_* (\cK_A \sqrt{\mu_\alpha} \beta_R^*(u\sqrt{\gamma_\alpha})).
\end{equation}

\begin{thm}
\label{thm:action_alpha}
Let $M$ be a manifold of dimension $n+1$ with boundary, $\cE, F$ index sets and $A\in \Psi_\alpha^{s,\cE}$. If $\Re(E_{01}+F)> (1+\alpha) n$, then
$$
A: \cA_{phg}^F(M,\Omega^{1/2}_\alpha) \to \cA_{phg}^{E_{10}\overline{\cup} (E_{11} + F)}(M,\Omega^{1/2}_\alpha).
$$
\end{thm}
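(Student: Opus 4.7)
The plan is to interpret the action of $A$ through the integral-kernel formula~\eqref{eq:op_action} as a three-step operation on $M_\alpha^2$: pullback of $u\sqrt{\gamma_\alpha}$ from the right factor via $\beta_R$, multiplication with $\cK_A\sqrt{\mu_\alpha}$, and pushforward via $\beta_L$. I would then apply successively the pullback theorem for $b$-maps and the pushforward theorem for $b$-fibrations of Subsection~\ref{subsec:polyhomog_as}. Each sub-operation preserves polyhomogeneity and transforms the index family by a known rule, so their composition yields the claim.

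First I would compute the lifting matrices of $\beta_L$ and $\beta_R$ using the projective coordinates~\eqref{proj-coords} around the front face, together with a symmetric chart covering $\cB_{01}$ and $\cB_{11}$. A direct check gives $\beta_L^*x=s\widetilde{x}$ and $\beta_R^*\widetilde{x}=\widetilde{x}$ in the chart containing $\cB_{10}$ and $\cB_{11}$, and the symmetric computation in the complementary chart yields
\begin{equation*}
e_L(\partial M,\cB_{10})=e_L(\partial M,\cB_{11})=1,\qquad e_L(\partial M,\cB_{01})=0,
\end{equation*}
with the analogue for $\beta_R$ after swapping left and right. Since $M$ has no corners, no boundary hypersurface of $M_\alpha^2$ can map to a corner under $\beta_L$; the fibration property in the interior of each face is immediate from the coordinate formulas, confirming that $\beta_L$ is a $b$-fibration. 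The pullback theorem then gives $\beta_R^*u\in \cA_{phg}^{\cF}(M_\alpha^2)$ with $\cF=(\N_0,F,F)$ at $(\cB_{10},\cB_{01},\cB_{11})$, and multiplication with $\cK_A$ produces a polyhomogeneous distribution on $M_\alpha^2$ with index family $\cE+\cF=(E_{10},\,E_{01}+F,\,E_{11}+F)$.

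Next I would apply the pushforward theorem to $\beta_L$. Only $\cB_{01}$ fails to map into $\partial M$; it is sent into $M^\circ$. The positivity hypothesis of Theorem~\ref{prop:push} requires that the effective index at $\cB_{01}$, after accounting for the Jacobians of $\beta_L$ and the $\Omega_\alpha$-weights encoded in the reference density $\mu_\alpha$, have strictly positive real part. By Lemma~\ref{lemm:densities}, the pullback to $M_\alpha^2$ of a smooth density on $M^2$ acquires a factor vanishing to order $1+n(1+\alpha)$ at the front face; comparing this with the $\Omega_\alpha(M)=x^{-1-n(1+\alpha)}\Omega(M)$ weights on each factor yields a net shift of $n(1+\alpha)$ at $\cB_{01}$. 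The positivity condition then reduces to $\Re(E_{01}+F)>(1+\alpha)n$, precisely the hypothesis. The pushforward index-set formula returns, at $\partial M$, the extended union of the contributions from $\cB_{10}$ and $\cB_{11}$ (both with lifting exponent $1$),
\begin{equation*}
(\beta_L)_\#(\cE+\cF)\big|_{\partial M}=E_{10}\;\overline{\cup}\;(E_{11}+F),
\end{equation*}
which is the desired index set.

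The main obstacle will be tracking the density and half-density conventions precisely enough to identify the threshold $(1+\alpha)n$: this demands careful bookkeeping of the Jacobian weights from the blow-down $\beta_\alpha^{(2)}$ provided by Lemma~\ref{lemm:densities} together with the $\bar r^{-1-n(1+\alpha)}$ factors built into $\Omega_\alpha$, with the two contributions partially cancelling at $\cB_{11}$ but surviving at the side faces. The conormal singularity of $\cK_A$ along $\Delta_\alpha$ is handled by standard microlocal arguments: away from $\partial M$ it acts as an ordinary pseudodifferential operator and contributes only to the interior regularity of $Au$, leaving the polyhomogeneous expansions at $\partial M$ unaffected.
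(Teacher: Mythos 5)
Your proposal is correct and follows essentially the same route as the paper: both reduce \eqref{eq:op_action} to the pullback and pushforward theorems for polyhomogeneous conormal distributions on $M_\alpha^2$, with the decisive step being the density bookkeeping showing $\mu_\alpha=(\rho_{01}\rho_{10}\rho_{11})^{-(1+\alpha)n}\mu_b$, which is exactly what converts the positivity hypothesis of Theorem~\ref{prop:push} at $\cB_{01}$ into the threshold $\Re(E_{01}+F)>(1+\alpha)n$ and yields $E_{10}\overline{\cup}(E_{11}+F)$ from the two faces mapping onto $\p M$. Your explicit treatment of the lifting matrices and of the diagonal conormal singularity only spells out steps the paper leaves implicit.
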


\begin{proof}
In order, to simplify the notations, if $\gamma,\mu$ are two densities, we write
$$
\gamma \sim \mu,
$$ 
whenever $\gamma = a \mu$, where $a$ is a smooth non-vanishing function.

In local coordinates close to $\cZ$ we have
$$
\gamma_\alpha \sim \frac{dxdy}{x^{1+(1+\alpha) n}},
$$
This implies that 
$$
\pi_L^* \gamma_\alpha \pi_R^* \gamma_\alpha \sim \frac{dxdyd\tilde{x}d\tilde{y}}{x^{1+(1+\alpha) n} \tilde x^{1+(1+\alpha) n}}.
$$
Both $x$ and $\tilde{x}$ are defining functions for the boundary surfaces of $M^2$. Using Lemmas~\ref{lemm:blow-up_matrix}, \ref{lemm:densities} and Definition~\ref{def:double_product} we find, that there exists $\mu \in \Omega(M^2_\alpha)$ and $\mu_b \in \Omega_b(M^2_\alpha)$ such that
$$
\mu_\alpha = \beta^*(\pi_L^* \gamma_\alpha \pi_R^* \gamma_\alpha) = \frac{\rho_{11}^{2+(1+\alpha) n - 1 }}{\rho_{01}^{1+(1+\alpha) n}\rho_{10}^{1+(1+\alpha) n} \rho_{11}^{2+2(1+\alpha) n}}\mu = (\rho_{01}\rho_{10}\rho_{11})^{-(1+\alpha) n} \mu_b.
$$

We thus have by definition
\begin{align*}
A(u\sqrt{\gamma_\alpha})\sqrt{\gamma_\alpha}&= (\beta_L)_*(\cK_A \sqrt{\mu_\alpha} \beta_R^*(u\sqrt{\gamma_\alpha}))\sqrt{\gamma_\alpha} = (\beta_L)_*(\cK_A \sqrt{\mu_\alpha} \beta_R^*(u\sqrt{\gamma_\alpha})\beta_L^*(\sqrt{\gamma_\alpha}))= \\
&= (\beta_L)_*(\cK_A  \beta_R^*(u)\mu_\alpha) ) = (\beta_L)_*(\beta_R^*(u) \cK_A  (\rho_{01}\rho_{10}\rho_{11})^{-(1+\alpha) n }\mu_b) ).  
\end{align*}
Thus by the push-forward theorem~\ref{prop:push} if $Re(E_{01}+F)> (1+\alpha)n$ it belongs to
$$
\cA^{(E_{10}-(1+\alpha)n)\overline{\cup} (E_{11}+F-(1+\alpha)n)}(M,\Omega_b) = \cA^{E_{10}\overline{\cup} (E_{11}+F)}(M,\Omega_\alpha),
$$
where the equality follows directly from the Definition~\ref{def:extended_union} of the extended union. Dividing by $\sqrt{\gamma_\alpha}$ finishes the proof.

\end{proof}

Next, we would like to consider how the two operators in this calculus compose. Recall that given a triple space we have maps $\beta_{LM},\beta_{MR}, \beta_{LR}:M^3_\alpha \to M^2_\alpha$. Given linear operators $A, B$ and their kernels $K_A$ and $K_B$ we can define their composition $C = A \circ B$ through the kernel
$$
K_C = (\beta_{LR})_* (\beta_{LM}^* (K_A) \beta_{MR}^* (K_B))
$$

\begin{thm}
\label{eq:composition_alpha}
Let $M$ be a manifold and $\dim M = n+1$, $\cE,\cF$ be index sets and $A\in \Psi^{s,\cE}_\alpha$, $B\in \Psi^{t,\cF}_\alpha$. If $\Re(E_{01}+ F_{10})> (1+\alpha) n$, then $A \circ B \in \Psi^{s+t,\cG}_\alpha$, where
\begin{align*}
G_{10}&=(E_{11} + F_{10} ) \overline{\cup} E_{10},\\
G_{01}&=(E_{01} + F_{11} ) \overline{\cup} F_{01},\\
G_{11}&=(E_{11} + F_{11} ) \overline{\cup} (E_{10}+F_{01}).
\end{align*}
 
\end{thm}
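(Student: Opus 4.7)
The plan is to exploit the geometry of the triple stretched product $M_\alpha^3$, just as in the analogous composition theorems for the $b$- and $0$-calculi. The Schwartz kernel of $C = A \circ B$ is by definition $\cK_C = (\beta_{LR})_*(\beta_{LM}^*(\cK_A) \cdot \beta_{MR}^*(\cK_B))$, so the proof reduces to (i) verifying that $\beta_{LM}, \beta_{MR}$ are $b$-maps and $\beta_{LR}$ is a $b$-fibration, (ii) computing the three associated lifting matrices, (iii) multiplying the pulled-back polyhomogeneous data on $M_\alpha^3$ and keeping careful track of the densities, and (iv) applying the push-forward theorem of Melrose to extract the index set at each face of $M_\alpha^2$.

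The first step would be to verify the geometric properties of the three projections. By construction, $M_\alpha^3$ is built precisely so that each $\pi_O$ lifts to a map $\pi_O^\Lambda$ into $M_\alpha^2$; commutativity of diagram \eqref{diag:triple_commute} combined with the commutativity of nested blow-ups shows these are $b$-maps. The claim that $\beta_{LR} = \pi_{LR}^\Lambda$ is a $b$-fibration is essentially the content of the analogous statement in the $0$-calculus: each boundary hypersurface $\cB_{ijk}$ of $M_\alpha^3$ maps into a single boundary hypersurface of $M_\alpha^2$ (and not into a corner), and the restriction to the interior of each such $\cB_{ijk}$ is a fibration. Then, applying Lemma \ref{lemm:blow-up_matrix} iteratively to the blow-ups defining $M_\alpha^3$, I would compute the lifting matrices $e_{LM}(i,j), e_{MR}(i,j), e_{LR}(i,j)$ in tabular form; the crucial entries show, e.g., that $\cB_{11,0} \subset \beta_{LM}^{-1}(B_{11}) \cap \beta_{MR}^{-1}(B_{10})\cap \beta_{LR}^{-1}(B_{10})$, explaining the extended union $(E_{11}+F_{10})\,\overline{\cup}\, E_{10}$ in $G_{10}$, with similar identifications for the other faces.

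Next I would handle the densities, mirroring the proof of Theorem \ref{thm:action_alpha}. Writing $\cK_A = k_A\sqrt{\mu_\alpha^{(2)}}$ and similarly for $B$, the product $\beta_{LM}^*(\cK_A)\cdot \beta_{MR}^*(\cK_B)$ lives naturally on $M_\alpha^3$ as a conormal distribution times a $b$-density, up to explicit weights in the boundary defining functions $\rho_{ijk}$. The exponents on those weight factors are fixed by Lemmas \ref{lemm:blow-up_matrix} and \ref{lemm:densities}, and their role is exactly to convert the $\Omega_\alpha$-densities into $\Omega_b$-densities to which Theorem \ref{prop:push} applies. Away from the lifted diagonal $\Delta_\alpha^{(3)}$ the pulled-back kernels are polyhomogeneous, so the push-forward theorem immediately gives polyhomogeneity of $\cK_C$ at the side and front faces of $M_\alpha^2$ with index sets $G_{10}, G_{01}, G_{11}$ computed as in the statement. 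The hypothesis $\Re(E_{01}+F_{10}) > (1+\alpha)n$ is exactly the positivity condition required by Theorem \ref{prop:push} at the face $\cB_{010}$ of $M_\alpha^3$ (the only face mapped by $\beta_{LR}$ into the interior of $M_\alpha^2$), ensuring convergence of the push-forward integral in the middle variable.

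The hardest step is to treat the conormal singularity at the diagonal and obtain the order $s+t$. Near the triple diagonal $\Delta_\alpha^{(3)} \subset \cB_{111}$, the pulled-back kernels $\beta_{LM}^*(\cK_A)$ and $\beta_{MR}^*(\cK_B)$ are conormal to two intersecting hypersurfaces which meet transversally there, and the standard local argument (working in projective coordinates on $M_\alpha^3$ and reducing to a $\R^{n+1}$ calculation) shows that the fiber integral of their product along the middle variable produces a conormal distribution of order $s+t$ at $\Delta_\alpha^{(2)}$. The main bookkeeping difficulty is simultaneously handling this transversal intersection argument while tracking the density factors and the non-integer entries of the lifting matrices arising from our non-integer blow-up weights; I would dispatch this by separating $\cK_A$ (and $\cK_B$) as a sum of a term compactly supported in a neighborhood of $\Delta_\alpha^{(2)}$, treated by the local conormal calculation, and a term vanishing near $\Delta_\alpha^{(2)}$ and supported away from the diagonal, treated purely by the push-forward theorem. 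Combining the two contributions gives the stated index family and order, completing the proof.
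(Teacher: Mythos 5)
Your proposal is correct and follows essentially the same route as the paper: lift the two kernels to the triple space $M_\alpha^3$, multiply, track the density weights via Lemmas \ref{lemm:blow-up_matrix} and \ref{lemm:densities} to convert $\Omega_\alpha$- into $\Omega_b$-densities, and apply the push-forward theorem, with the hypothesis $\Re(E_{01}+F_{10})>(1+\alpha)n$ being exactly the integrability condition at the one face mapped to the interior. If anything, you are more careful than the paper's own (rather terse) argument, since you explicitly isolate the near-diagonal conormal calculation giving order $s+t$, which the paper leaves implicit in its reference to the proof of Theorem \ref{thm:action_alpha}.
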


\begin{proof}
The proof is similar to the proof of Theorem~\ref{thm:action_alpha}. We use $\gamma_\alpha$, $\gamma_b$, $\mu_\alpha$, $\mu_b$ as in the previous proof, and
\begin{align}
    \nu_\alpha = \beta_{LR}^*(\mu_\alpha)\beta_{LM}^*(\mu_\alpha)\beta_{MR}^*(\mu_\alpha)
\end{align}
Similarly to the previous proof, using Lemmas~\ref{lemm:blow-up_matrix},~\ref{lemm:densities} we obtain, that there exists a $b$-density $\nu_b$ on $\Omega^3_\alpha$
$$
\nu_\alpha = \frac{1}{\overline{\rho}^{(1+\alpha)n}}\nu_b,
$$
where $\overline{\rho}$ is the product of defining functions for each face of $\overline{\rho}$.

By definition 
$$
\cK_{A\circ B} \sqrt{\mu_\alpha} = (\beta_{LR})_*(\beta_{LM}^*(\cK_A \sqrt{\mu_\alpha})\beta_{MR}^*(\cK_A \sqrt{\mu_\alpha})).
$$
We multiply both sides by $\sqrt{\mu_\alpha}$ and simplify:
$$
\cK_{A\circ B} \mu_\alpha = (\beta_{LR})_*(\beta_{LM}^*(\cK_A)\beta_{MR}^*(\cK_B)\nu_\alpha) = (\beta_{LR})_*(\beta_{LM}^*(\cK_A)\beta_{MR}^*(\cK_B)\overline\rho^{-(1+\alpha)n}\nu_b).
$$
Once again we apply the push-forward theorem~\ref{prop:push}. Then we find that the pushforward will be in $\cA^{\tilde \cG}(M_\alpha^2, \Omega_b)$, where
\begin{align*}
\tilde G_{10}&=(E_{11} + F_{10} - (1+\alpha)n) \overline{\cup} (E_{10}- (1+\alpha)n),\\
\tilde G_{01}&=(E_{01} + F_{11} - (1+\alpha)n) \overline{\cup} (F_{01}- (1+\alpha)n),\\
\tilde G_{11}&=(E_{11} + F_{11} - (1+\alpha)n) \overline{\cup} (E_{10}+F_{01}- (1+\alpha)n).
\end{align*}
But we have $\cA^{\tilde \cG}(M_\alpha^2, \Omega_b) = \cA^{\cG}(M_\alpha^2, \Omega_\alpha)$. The result now follows from dividing by $\sqrt{\mu_\alpha}$.

\end{proof}

Finally, we need the regularity properties of operators in the large calculus.

\begin{thm}\label{thm:mapprops}
Given $a,a' \in \C$ an operator $A\in \Psi^{s,\cE}_\alpha(M)$ extends to a bounded operator:
$$
A: x^a H^t_\alpha (M,\Omega^{1/2}_\alpha)\to x^{a'} H^{t'}_\alpha (M,\Omega^{1/2}_\alpha) 
$$
if $t' \leq t-s$, $\Re (E_{01} + a) > (1+\alpha) n$, $\Re (E_{10} - a') > (1+\alpha) n$, $\Re (E_{11} - a' + a) > 0$. 

Moreover, if $t' < t-s$ and $\Re (E_{11} -a' + a) > 0$, then this operator is also compact.
\end{thm}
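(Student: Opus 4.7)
The plan is to mirror the strategy used for the $0$-calculus and the edge calculus: reduce to the unweighted case by conjugation, split into a small-calculus piece and a residual piece, and apply Schur's test to the residual piece. First I would use the composition theorem (Theorem~\ref{eq:composition_alpha}) to observe that multiplication by $x^a$ is a zero-order element of the large calculus whose Schwartz kernel is diagonal-supported and shifts index sets by $a$ at $B_{01}$ and $B_{11}$; similarly $x^{-a'}$ shifts by $-a'$ at $B_{10}$ and $B_{11}$. Hence $x^{-a'}Ax^{a}\in \Psi^{s,\cE'}_\alpha(M)$ with $E'_{10}=E_{10}-a'$, $E'_{01}=E_{01}+a$, $E'_{11}=E_{11}-a'+a$, and the hypotheses become exactly $\Re E'_{01}>(1+\alpha)n$, $\Re E'_{10}>(1+\alpha)n$, $\Re E'_{11}>0$. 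This reduces us to the case $a=a'=0$.

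Next I would decompose $A=A_{\mathrm{sm}}+A_{\mathrm{res}}$ with $A_{\mathrm{sm}}\in \Psi^{s}_\alpha(M)$ supported in a small tubular neighborhood of $\Delta_\alpha$ and $A_{\mathrm{res}}\in \Psi^{-\infty,\cE}_\alpha(M)$, obtained by cutting the kernel with a function identically one near $\Delta_\alpha$. For $A_{\mathrm{sm}}$, the conormal singularity along $\Delta_\alpha$ agrees microlocally with a classical pseudodifferential operator on ${}^\alpha TM$, so a Calder\'on--Vaillancourt--type estimate adapted to the $\alpha$-calculus gives $L^2(\Omega^{1/2}_\alpha)$-boundedness when $s=0$; the definition of the $H^t_\alpha$-scale (Definition~\ref{alpha-sobolev}) together with composition against elliptic elements of the small calculus then promotes this to a bounded map $H^t_\alpha\to H^{t-s}_\alpha$ for every real $t$, and further restricting the target to $H^{t'}_\alpha$ with $t'\leq t-s$ is automatic.

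The substantive content is the $L^2$-boundedness of $A_{\mathrm{res}}$, which I would establish by Schur's test. Arguing as in the proof of Theorem~\ref{thm:action_alpha}, writing the kernel against the reference $b$-density $\mu_b$ on $M^2_\alpha$ introduces a weight $(\rho_{10}\rho_{01}\rho_{11})^{-(1+\alpha)n}$, so Schur's test reduces to uniform finiteness of the fiber integrals of $|K_{\mathrm{res}}|(\rho_{10}\rho_{01}\rho_{11})^{-(1+\alpha)n}$ along the two projections $\beta_L,\beta_R:M^2_\alpha\to M$. Because $K_{\mathrm{res}}\in\cA^{\cE}_{phg}$ and these projections are $b$-fibrations, convergence near each boundary hypersurface is governed by the leading exponent of the corresponding index set: we need $\Re E_{01}>(1+\alpha)n$ for the right-factor integral near $B_{01}$, the symmetric $\Re E_{10}>(1+\alpha)n$ for the left factor, and $\Re E_{11}>0$ for integrability across the front face $B_{11}$. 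Matching the excess decay coming from $\cE$ against the singular density weights so that these integrals are absolutely convergent is the principal technical obstacle.

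Finally, for compactness under the stronger hypothesis $t'<t-s$, I would factor the composition as the bounded map $A:H^t_\alpha\to H^{t-s}_\alpha$ followed by the inclusion $H^{t-s}_\alpha\hookrightarrow H^{t'}_\alpha$. The strict drop of differential order can be absorbed via Rellich: on the compact manifold $M$ one approximates $A_{\mathrm{res}}$ in operator norm by operators with smooth, compactly supported kernels, using the strict positivity $\Re E_{11}>0$ together with the order gap $t-s-t'>0$ to control the tails, and each such approximant is manifestly compact, so their uniform limit is compact as well.
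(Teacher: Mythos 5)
Your proposal follows essentially the same route as the paper: conjugation by $x^{a}$ and $x^{-a'}$ to reduce to $a=a'=0$, a reduction to $t'=t-s$ and then to $t=s=0$ via the small calculus, a split into a small-calculus piece plus a residual piece, and Schur's test for the residual piece via the pushforward theorem, which produces exactly the stated conditions on $E_{10}$, $E_{01}$, $E_{11}$. The only divergence is the compactness step, where the paper cites compactness of weighted Sobolev embeddings on manifolds of bounded geometry while you approximate the residual kernel by smoothing operators; note, though, that the bare inclusion $H^{t-s}_\alpha\hookrightarrow H^{t'}_\alpha$ is \emph{not} compact (the paper itself remarks that $\Psi^{-\infty}_\alpha(M)$ is non-compact), so the front-face decay $\Re(E_{11}-a'+a)>0$ must be invoked for the near-diagonal piece as well, not only for $A_{\mathrm{res}}$.
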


The proof is analogous to \cite[Thm 3.25]{mazzeo}. We include here a version for completeness.

\begin{proof}
First, we prove the continuity result. Instead of $A$ we can consider the operator $x^{-a'}A x^{a}$. Then we have to consider $\Re (E_{01}+a)$ instead of $\Re (E_{01})$, $\Re (E_{10}-a')$ instead of $\Re E_{01}$, $\Re (E_{11}+a-a')$ instead of $\Re E_{11}$. So we only look at the operator
$$
A:  H^t_\alpha (M,\Omega^{\frac{1}{2}}_\alpha)\to  H^{t'}_\alpha (M,\Omega^{\frac{1}{2}}_\alpha) 
$$ 
with properly adjusted index set $\cE$.

Next note that $H^{t'}$ is embedded continuously in $H^{t-s}$ for $t' \leq t-s$. Thus it would be sufficient to prove the Theorem when $t' = t - s$. Also by definition of large and small calculus we can write 
$$
A=A' + A'',
$$
where $A'\in \Psi^s_\alpha(M)$ and $A'' \in \Psi_\alpha^{-\infty,\cE}(M)$. The statement for $A'$ follows by definition, so it only remains to prove the statement for $A''$. We can simplify even more by taking two invertible operators $B\in \Psi^{-(t-s)}_\alpha(M)$, $C\in \Psi^{-t}_\alpha (M)$ and considering $B A'' C$ as an operator from $L^2$ to itself. Thus we can also assume that $t=s=0$.

Finally, the boundedness result follows essentially from Schur's inequality. Using Cauchy-Schwartz in the second inequality and Fubini's theorem in the last equality below, we find
\begin{align*}
|\langle A''u \sqrt{\gamma_\alpha}, v \sqrt{\gamma_\alpha}\rangle| & = \left|\int_M (A''u\sqrt{\gamma_\alpha})v\sqrt{\gamma_\alpha} \right| = \left|\int_{M_\alpha^2} \beta_L^*\left((A''u\sqrt{\gamma_\alpha})v\sqrt{\gamma_\alpha}\right) \right| = \\
&= \left|\int_{M_\alpha^2} \cK_{A''} (\beta_R^* u) (\beta_L^*v) \mu_\alpha \right|  \leq  \int_{M_\alpha^2} |\cK_{A''}| |\beta^*_R u| | \beta^*_L v| \mu_\alpha \leq \\
&\leq \left(\int_{M_\alpha^2} |\cK_{A''}| |\beta^*_R u|^2 \mu_\alpha \right)^{\frac{1}{2}}\left( \int_{M_\alpha^2} |\cK_{A''}| |\beta^*_L v|^2 \mu_\alpha \right)^{\frac{1}{2}} = \\
&= \left(\int_{M}  | u|^2 ((\beta_R)_*|\cK_{A''}|\mu_\alpha) \right)^{\frac{1}{2}}\left( \int_{M}  | u|^2 ((\beta_L)_*|\cK_{A''}|\mu_\alpha) \right)^{\frac{1}{2}}.
\end{align*}
In particular, if $(\beta_{L,R})_*(|\cK_{A''}|\mu_\alpha) \leq C\gamma_\alpha$, then we get 
$$
|\langle A''u \sqrt{\gamma_\alpha}, v \sqrt{\gamma_\alpha}\rangle| \leq C^2\|u\|\|v\|.
$$
So we need to understand when $(\beta_{L,R})_*(|\cK_{A''}|\mu_\alpha)$ exist and are bounded. Notice that ~\eqref{eq:density} gives
$$
(\beta_{L,R})_*(|\cK_{A''}|\mu_\alpha) = (\beta_{L,R})_*(|\cK_{A''}|\beta_{R,L}^*(\gamma_\alpha))\gamma_\alpha.
$$
It is straightforward to compute the index sets of $(|\cK_{A''}|\beta_{R,L}^*(\gamma_\alpha))$. In particular, we will have existence of the pushforward $(\beta_{L})_*(|\cK_{A''}|\beta_{R}^*(\gamma_\alpha))$ if $\Re (E_{01}+a) - (1+\alpha) n > 0$ and of $(\beta_{R})_*(|\cK_{A''}|\beta_{R}^*(\gamma_\alpha))$ if $\Re (E_{10}-a') - (1+\alpha) n > 0$. Now we can apply the push-forward theorem~\ref{prop:push}, which will give the boundness:
\begin{align*}
(\beta_{L})_*(|\cK_{A''}|\mu_\alpha) \leq C\gamma_\alpha &\iff \Re\left( (E_{10}-a'-(1+\alpha) n )\bar\cup (E_{11}+a - a') \right) > 0,\\
(\beta_{R})_*(|\cK_{A''}|\mu_\alpha) \leq C\gamma_\alpha &\iff \Re\left( (E_{01}+a-(1+\alpha) n )\bar\cup (E_{11}+a - a') \right)>0.
\end{align*}
The index sets on the left of the extended union must be positive as we saw previously. Thus by definition of the extended union to guarantee boundness we must additionally have
$$
\Re (E_{11} + a - a') > 0.
$$

Compactness follows from the compactness of embeddings of weighted Sobolev spaces on manifolds of bounded geometry (see, for example,~\cite[Theorem 4.6]{sobolev}).
\end{proof}

\subsection{Construction of a Parametrix}\label{subsec:param}

The goal of this section is to finish the proof that an $\alpha$-elliptic differential operator admits an $\alpha$-pseudodifferential inverse modulo compact operators, acting on weighted Sobolev spaces with weights governed by the indicial roots of the operator. 

Consider a differential operator $P$ which in local coordinates can be written as
$$
P=\sum_{j + |\beta| \leq m}a_{j,\beta}(x,y)(x\p_x)^j(x^{1+\alpha}\p_y)^\beta,
$$
which belongs to $\Psi^m_\alpha(M)$. Its principal symbol in this coordinates is given by
$$
^\alpha \sigma_m(P) = \sum_{j + |\beta| \leq s}a_{j,\beta}(x,y) \xi^j \eta^\beta.
$$
We define the normal operator $N(P)$ at a point $q\in \p M$ to be defined as an operator obtained by restricting to the front face the kernel of the lift of $P$ to $M^2_\alpha$. Using local coordinates
$$
(s,u,\tilde{x},\tilde{y}) = \left(\frac{\tilde{x}}{x},\frac{y-\tilde y}{x^{1+\alpha}},\tilde{x},\tilde{y} \right),
$$   
we obtain
\begin{equation}
\label{eq:normal}
N(P) = \sum_{j + |\beta| \leq m}a_{j,\beta}(0,\tilde y)(s\p_s)^j(s^{1+\alpha}\p_u)^\beta.
\end{equation}

We also need for the parametrix construction the family of indicial family $I_\zeta(P)$, which is defined as 
$$
P(x^z (\log x)^p f(x,y))= x^z(\log x)^p I_\zeta(P; y)f(0,y)+ O(x^z(\log x)^{p-1}),
$$
for all $f\in C^\infty(M), \zeta \in \C, p \in \N_0$. In local coordinates this arises as the Mellin transform of the operator
$$
I(P;y) = \sum_{j\leq m} a_{j,0}(0,y)(s\p_s)^j,
$$
thus 
\[ I_\zeta(P;y) = \sum_{j\leq m} a_{j,0}(0,y)\zeta^j \]

\begin{definition}
Recall from Definition \ref{def:indicialroots} the definition of the boundary spectrum. We also have a refined notion of boundary spectrum which keeps track of multiplicities; this is defined as
$$
\widetilde{\text{Spec}}_b(P,y) = \{(z,l)\in \C \times \N_0: I_\zeta(P;y)^{-1}\textit{ has a pole at $z$ of order $\geq l+1$ }. \} 
$$
As before we say that $P$ has \emph{constant indicial roots} if the discrete set $\text{Spec}_b(P,y)\subset \C \times \N_0$ is independent of the choice of $y\in \p M$.
\end{definition}

To finally state the result we introduce some sets which will arise as the index sets of the parametrix and projectors: let $C\in \R$ and denote
\begin{align*}
\Sigma^+(C) &= \{(\gamma,p)\in \widetilde{\textrm{Spec}}_b(P): \Re \gamma > C\},\\
\Sigma^-(C) &= \{(-\gamma,p)\in \widetilde{\textrm{Spec}}_b(P): \Re \gamma < C\}, \\
\Sigma(C) &= \Sigma^+(C)\cup \Sigma^-(C) .
\end{align*}
The reason such sets arise is because when constructing the parametrix we proceed in a way formally similar to the construction of an elliptic parametrix on a closed manifold: in constructing an Neumann series for $PQ-\id-R$, i.e. asymptotically summing $\id+R+R^2+R^3+...$, with each term improving the error, both symbolically and by improving decay. In order for these compositions of $R$ to even be defined in the $\alpha$-calculus, we refer to Theorem \ref{eq:composition_alpha}; we observe that $R$ must vanish to infinite order on at least one of the side faces in order for this sum to converge even asymptotically. But further, each composition of $R$ produces more complicated index sets at the front face via the extended union; the above sets are introduced merely to account for every possible term in the asymptotic expansion.

We can now finally state and prove
\begin{thm}\label{thm:param}
Assume that $P\in \emph{Diff}_\alpha^m(M)$ is an $\alpha$-elliptic differential operator with constant indicial roots, and choose
\[ \delta\not\in \{ \Re(\zeta) + \tfrac{1}{2} \, : \, \zeta\in \emph{Spec}_b(P)  \}   \]
such that either $\delta<\underline{\delta}(P)$ or $\delta>\overline{\delta}(P)$. Then $P$ has a one-sided generalized inverse $G$ and orthogonal projections $\pi_{\ker}$ to the kernel of $P$ and $\pi_{\coker}$ to the kernel of $P^*$. More precisely, there are elements of the large $\alpha$-calculus such that,
\begin{align*}
G &\in \Psi_\alpha^{-m,\cH}(M)+ \Psi_\alpha^{-\infty,\cH'}(M),\\
\pi_{\ker} &\in \Psi_\alpha^{-\infty,\cE}(M),\\
\pi_{\coker} &\in \Psi_\alpha^{-\infty,\cF}(M),
\end{align*}
and
\begin{align*}
\id - GP &= \pi_{\ker}, \qquad\text{for } \delta > \overline{\delta},\\
\id - PG &= \pi_{\coker}, \qquad \text{for } \delta < \underline{\delta}.
\end{align*}
Here for $\delta > \overline{\delta}$, $\pi_{\ker}$ is a compact operator, and further
$$
\cE = \{E_{10},E_{01},E_{11} \} = \{\Sigma^+(\delta),\Sigma^+(\delta)-2\delta,\infty\},
$$
while for $\delta < \underline{\delta}$, $\pi_{\coker}$ is a compact operator, and further
$$
\cF = \{F_{10},F_{01},F_{11} \} = \{\Sigma^-(\delta)+2\delta,\Sigma^-(\delta),\infty\},
$$
and in either case $\cH = \{H_{10},H_{01},H_{11} \} = \{\Sigma(\delta),\Sigma(\delta),\N_0\}$, and $\cH^\prime=\{\Sigma(\delta),\Sigma(\delta),\infty\}$.
\end{thm}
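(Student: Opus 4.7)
The plan is to mirror the classical geometric microlocal parametrix construction familiar from Mazzeo's $0$-calculus, carrying out each step inside the $\alpha$-calculus developed in the previous sections. I would proceed in three stages: first build a symbolic parametrix using $\alpha$-ellipticity, then remove the leading error at the front face by inverting the normal operator via the indicial family, and finally use asymptotic Neumann summation together with a short functional-analytic argument to convert the resulting parametrix into a true generalized inverse.

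For the first stage I would exploit that $P$ is $\alpha$-elliptic, i.e.\ ${}^\alpha\sigma_m(P)$ is invertible off the zero section of ${}^\alpha T^*M$. Standard symbolic constructions on the $\alpha$-cotangent bundle (inverting the symbol, Borel summation in symbol order) produce a $Q_0\in \Psi_\alpha^{-m}(M)$ in the \emph{small} calculus with $PQ_0 = \id - R_0$ and $Q_0P = \id - R_0'$ where $R_0,R_0'\in \Psi_\alpha^{-\infty}(M)$, so these errors vanish to infinite order at the two side faces but retain a Taylor expansion at the front face $B_{11}$. To remove the leading front-face term I would invert the normal operator $N(P)$ from \eqref{eq:normal}. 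Fourier transform in the $u$-variable and rescaling in $s$ reduces this to inverting a smooth family of Bessel-type ODE operators on $\mathbb{R}_+$ parameterized by $(y,\hat\eta)\in S^*(\partial M)$. The Mellin transform identifies the obstruction to invertibility with the indicial family $I_\zeta(P;y)$, whose poles are exactly $\widetilde{\mathrm{Spec}}_b(P;y)$. The hypothesis that $\delta$ avoids the forbidden lattice together with either $\delta>\overline\delta$ or $\delta<\underline\delta$ guarantees the resolvent family is uniformly bounded on the critical line $\{\Re\zeta = \delta-\tfrac12\}$ by Theorem~\ref{thm:mellin_reg} and the unique-continuation properties invoked before Theorem~\ref{thm:simpleparametrix}. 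Inverse Mellin transform then produces a $Q_1$ in the large calculus whose kernel at $B_{11}$ picks up contributions precisely from the poles of $I_\zeta(P;y)^{-1}$; this is the source of the side-face index set $\Sigma(\delta)$ in $\cH$.

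Having inverted the front-face normal operator, the improved parametrix $Q_0+Q_1$ satisfies $P(Q_0+Q_1) = \id - R_1$ with $R_1$ vanishing to first order at $B_{11}$. I would then iterate by Neumann summation, asymptotically summing $\sum_{k\geq 0} R_1^k$ using the composition Theorem~\ref{eq:composition_alpha} and Borel's Lemma~\ref{lem:borellemma} to kill the error at $B_{11}$ to infinite order. Which one-sided series converges depends on the sign condition: for $\delta>\overline\delta$ the series for a left parametrix converges and the residual error is supported (in its index set) on $B_{10}$, giving $\cE$; for $\delta<\underline\delta$ the right parametrix converges, giving $\cF$. The residual error lies in $\Psi_\alpha^{-\infty,\cE}$ (resp.\ $\cF$) and is compact by Theorem~\ref{thm:mapprops} since $\Re(E_{11}) = +\infty > 0$. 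A standard abstract argument (Fredholm alternative plus bootstrapping polyhomogeneity through the parametrix $G'$) converts $G'$ into a true generalized inverse $G$ by subtracting its action on $\ker P$ (resp.\ $\ker P^*$), and identifies $\pi_{\ker}$, $\pi_{\coker}$ as the induced orthogonal projections. Since these projections are obtained from finitely many $L^2$-eigenfunctions of $P$ (which inherit polyhomogeneous expansions from membership in the range of $G'$ via Theorem~\ref{thm:phg_Grushin}), their Schwartz kernels lie in the large calculus with the announced index sets.

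The main technical obstacle will be the careful bookkeeping of index sets through the iterative construction, in particular verifying that the extended unions appearing in Theorem~\ref{eq:composition_alpha} at each Neumann step do not worsen the decay rates claimed in $\cH$ and $\cH'$, and that the Borel-type asymptotic summation can be carried out uniformly. A secondary but nontrivial difficulty is controlling the growth of $I_\zeta(P;y)^{-1}$ as $|\Im\zeta|\to\infty$ on the critical line uniformly in $y$; here the constant-indicial-roots hypothesis is essential, as it eliminates $y$-dependence of the pole locations and allows the inversion of $N(P)$ to be performed uniformly over the front face.
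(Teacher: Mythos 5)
Your overall strategy is the same as the paper's: a symbolic parametrix $Q_0$ from $\alpha$-ellipticity, inversion of the normal operator by Fourier transform in $u$ and rescaling to a family of Bessel-type model operators controlled through the Mellin transform and the indicial roots, asymptotic Neumann summation via the composition theorem, and compactness of the residual from the mapping properties. Two points, however, are glossed over in a way that leaves a genuine gap.

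First, under the stated hypotheses ($\delta>\overline{\delta}$ or $\delta<\underline{\delta}$) the normal operator $N(P)$ is only injective or only surjective on $s^\delta L^2$, not invertible, so you cannot simply ``produce a $Q_1$'' by inverting it; and your suggestion that the one-sidedness manifests only in ``which Neumann series converges'' does not work as stated, since improving the order of vanishing of the error at the front face requires solving $N(P)\,V = R_0|_{\cB_{11}}$ exactly, i.e.\ surjectivity of $N(P)$. The paper handles this by first carrying out the entire construction in the special case where $N(P)$ is invertible (using the generalized inverse $G_0$ of the model Bessel operators, with its projectors, and verifying via the inverse Fourier integral that $N(G)$ really lies in the $\alpha$-calculus with polyhomogeneous kernel at the side faces — a step you list only as ``bookkeeping''), and then reducing the semi-Fredholm case to the invertible one by applying the construction to $\mathfrak{P}=PP^*$ (or $P^*P$), whose normal operator $N(P)N(P)^*$ is invertible, and reading off the one-sided generalized inverse and the projections $\pi_{\ker}$, $\pi_{\coker}$ from there. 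Second, before the Neumann series can even be asymptotically summed, Theorem~\ref{eq:composition_alpha} requires the error to vanish to infinite order at one side face; this is achieved in the paper by a separate correction $Q_2$ obtained from contour integration of $I_\zeta(P)^{-1}$ around the indicial roots, solving away the finite-order terms at $\cB_{01}$ term by term. You allude to the Mellin poles as the source of $\Sigma(\delta)$ but fold this side-face step into the front-face one; without it the iteration $\sum_k R^k$ is not defined in the calculus.
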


\begin{proof}
Let $P\in \text{Diff}_\alpha^m(M)\subset \Psi^m_\alpha(M)$. Assume first for simplicity that we are in the special case that $\overline{\delta}(P)<\delta<\underline{\delta}(P)$, and hence $N(P)$ is actually invertible on $s^\delta L^2$. (Note that this is not in general possible; at the end of the proof we will reduce the proof for the semi-Fredholm case to this simpler one).

Using the fact that $P$ is $\alpha$-elliptic, i.e. $^\alpha\sigma_m(P)$ is invertible, we can construct a parametrix of $P$ in the small calculus as is done in the standard pseudodifferential theory. Thus there exists $Q_0\in \Psi^{-m}_\alpha(M)$, such that
$$
P Q_0 = \id  - R_0,
$$
where $R_0\in \Psi^{-\infty}_\alpha(M)$. Recall that by Theorem \ref{thm:mapprops} we know both $Q_0: x^\delta H_\alpha^{r}(M)\to x^\delta H_\alpha^{r+m}(M)$ and $R_0: x^\delta H_\alpha^{r}(M) \to x^\delta H_\alpha^{\ell}(M)$ are bounded maps for all $r,\ell$. However, the remainder $R_0$ cannot yet be a compact operator because it vanishes only to finite order at the front face.

The next step is to use the normal operator to produce a correction to the parametrix which iteratively improves the decay at the front face. Namely we find an operator $Q_1\in \Psi^{-m, \cH}_\alpha$ such that
$$
(P Q_1)|_{\cB_{11}} = N(P)(Q_1|_{\cB_{11}}) = R_0|_{\cB_{11}}, 
$$ 
then $Q_1$ will allow us to remove the contribution to the remainder from $R_0|_{\cB_{11}}$ and improve the order of vanishing at $\cB_{11}$ by one.

Assuming for now that such an operator $Q_1$ exists. Then we get that
$$
P(Q_0 + Q_1) = \id - R_1,
$$
where $R_1\in \Psi_{\alpha}^{-\infty,\widetilde{\cH}}$, where $\widetilde{\cH}$ is some index set satisfying $\widetilde{H}_{11}=1$, i.e. that the kernel of $R_1$ vanishes up to first order at the front face. Finally, we use the fact that the induced action of the operator $P$ at the side faces is given by the indicial operator,
\begin{equation}\label{eq:ind-eqn}
(PQ_2)|_{\cB_{01}} = I(P)(Q_2|_{\cB_{01}}) ,  
\end{equation}
which is invertible except when $Q_2$ has a term in its expansion at $\cB_{01}$, with exponent $\zeta_0\in \text{Spec}_b(P)$; if so we can use meromorphy of the indicial polynomial $I_\zeta(P)^{-1}$ to consider a loop around $\zeta_0\in \C$. Let $v\in C^\infty(\cB_{01})$, and consider
\[ \frac{1}{2\pi i}\oint_{\Gamma_{\zeta_0}} \frac{x^{-iz}I_z(P)^{-1}}{z-\zeta_0} v(y) dz = \sum_{j=1}^{\text{ord}(p)} x^{\zeta_0} (\log(x))^j u_{\zeta_0,j}  \]
hence because $v$ can be chosen arbitrarily we can 
\[ Pu =x^{\zeta_0} +\sum_{j=1}^{\text{ord}(p)} x^{\zeta_0} (\log(x))^j v_{\zeta_0,j} = x^{\zeta_0}v + O(x^{\zeta_0+1}),   \]
hence at the risk of modifying the index sets of $Q_2$ at term of order $(\zeta_0,p)\in \widetilde{\text{Spec}}_b(P)$ we can construct $Q_2$ such that $PQ_2 - R_1 = R_2$, an error which now vanishes to infinite order at $\cB_{10}$ and still to first order at $\cB_{11}$. This step also explains why $F_{01}= \Sigma^-(\delta)$, and self-adjointness of $\pi_{\coker}$ on $x^\delta H_\alpha^r(M)$ implies $F_{10}=F_{01}-2\delta$, since $\pi_{\ker}^* = x^\delta \pi_{\ker} x^{-\delta}$. 

Having now found operators $Q_0, Q_1, Q_2$ such that
\[ P(Q_0 + Q_1 + Q_2) = \id - R_2, \]
where $R_2$ vanishes up to infinite order at the left face and still vanishes to first order at $\cB_{11}$.  and an operator $R_2 \in \Psi^{-\infty,\{\infty,F_{01},1+ \N_0\}}$. Now we can use the Neumann series to define
$$
S = \sum_{l \in \N_0} R_2^l. 
$$
Note that by Theorem \ref{eq:composition_alpha} $R_2^l \in \Psi^{-\infty,\{\infty,F_l,l+ \N_0\}}$, where 
$$
F_l = F_{01} \overline{\cup} (F_{01}+1) \overline{\cup} \dots \overline{\cup} (F_{01}+l-1).  
$$

So we can take $F_-$ to be the extended union of all $(F_{01}+l)$, $l\in \N_0$. Hence our remainder is obtained as
\[ R_r =   \id - P(Q_0+Q_1+Q_2)(\id + S) \in \Psi_\alpha^{-\infty,\{\infty, F_-, \infty\}}. \]
Thus $G = (Q_0+Q_1 +Q_2)(\id+S) \in \Psi_\alpha^{-m,\cG}$ is a right parametrix. The left parametrix is constructed in a complete analogy. Having obtained that $P$ has closed range, we can obtain the remainders as the projections onto $\ker(P), \coker(P)$, which have index sets as in the conclusion of the theorem, by the self-adjointness argument above.

$ $\linebreak
Now let us come back to the construction of the operator $Q_1$ on which the rest of the construction relies. As already mentioned $Q_1$ must solve
\begin{equation}\label{model-problem}
N(P)(Q_1|_{\cB_{11}})= R_0|_{\cB_{11}}
\end{equation}
If we can find an inverse of $N(P)$, then we could simply take
$$
Q_1|_{\cB_{11}}= (N(P))^{-1}R_0|_{\cB_{11}}
$$
and then simply extend the kernel of this operator smoothly off the boundary face $\cB_{11}$. 

In order to invert the normal operator, we perform a partial Fourier transform on the $u$ variable in~\eqref{eq:normal} and obtain
$$
\widehat{N(P)}= \sum_{j+ |\beta| \leq 2m}a_{j,\beta}(0,y)(s\p_s)^j(s^{1+\alpha}i\eta)^\beta.
$$
Next we rescale the variable $s$ by switching to $\tau=s^{1+\alpha}|\eta|$ and $\hat{\eta}=\eta/|\eta|$. This gives us an operator of the form
$$
P_0(y,\hat{\eta}) = \sum_{j+|\beta| \leq 2m} a_{j,\beta}(0,y)\;((1+\alpha)\tau \p_\tau)^j(i\tau \hat{\eta})^\beta = \sum_{j+|\beta| \leq 2m}\widetilde{a}_{j,\beta}(0,y)(\tau\p_\tau)^j(\tau\hat{\eta})^\beta,
$$
for some modified coefficients $\widetilde{a}_j$. Thus we obtain a family of ``model Bessel-type" operators parametrized by $y\in \p M$ and $\hat\eta\in S^*_y(\p M)$. Although the Fourier transform depends on the various coordinate choices made in this representation, the conjugation of $N(P)$ by this Fourier transform does not, and $P_0$ is invariantly defined as a $b$-differential operator acting on $N_p^+\partial M=:(T_p\partial M)/\mathbb{R}_+$, the inward-pointing normal bundle (this quotient is by the $\mathbb{R}_+$-dilation action on the fibers).

For this family of Bessel-type operators \cite{mazzeo} establishes Fredholm properties for each value of $\hat{\eta}$ on certain weighted Sobolev spaces, with weight factor depending on the indicial roots of $P_0$. Define the spaces
\[ H_{b}^{r,\delta,\ell}(\mathbb{R}_+):=\left(\frac{\tau}{1+\tau}\right)^\delta (1+\tau)^{-\ell}H_{b}^r(\mathbb{R}_+)  \]
then we say a weight $\delta\in \mathbb{R}\setminus \text{Re}(\text{Spec}_b(P_0))$ is injective (resp. surjective) if the model Bessel operator 
\begin{equation}\label{model_bessel} P_0(p,\hat{\eta}): \tau^\delta H_b^{m} (N_p^+ \partial M) \to \tau^\delta L_b^2(N_p^+\partial M) \end{equation}
is injective (resp. surjective). In fact, \cite[Lemma 5.5]{mazzeo} proves that for $\delta\in \mathbb{R}$ satisfying 
\[ \delta \not\in \{\text{Re}(\zeta) + \tfrac{1}{2} \, : \, \zeta \in \text{Spec}_b(P_0) \} \]
the model operator \eqref{model_bessel} is Fredholm. Choosing $\ell=-\delta$, we can as in \cite{mazzeo}, construct a generalized inverse on $H_b^{0,\delta,-\delta}=\tau^\delta L_b^2$, given as
\[ G_0P_0=\id - \pi_{\ker}, \quad P_0 G_0=\id - \pi_{\coker}  \]
\[ G_0: H_b^{r,\delta,\ell-m}\to H_b^{r+2m,\delta,\ell}, \quad \pi_{\text{(co)ker}}: H_b^{r,\delta,\ell-2m} \to H_b^{r',\delta,\ell'}  \]
which are bounded for any $r,r',\ell,\ell'$, and further have kernels satisfying ${G_0}(\zeta,\zeta^\prime)$ and $\cK_{\pi_{i}}(\zeta,\zeta^\prime)$ are rapidly decreasing in $\zeta$ locally uniformly in $\zeta^\prime$, and rapidly decreasing in $\zeta^\prime$ locally uniformly in $\zeta$, (with the exception of $\cK_{G_0}$ having this rapid decrease property for $\zeta\neq \zeta^\prime$).

Having obtained a generalized inverse for $P_0$, which was obtained from $\widehat{N(P)}$ by the substitution $\tau = s^{1+\alpha}|\eta|$, we can reverse this substitution to obtain a generalized inverse for $\widehat{N(P)}$: namely $\widehat{N(G)}(s,\tilde{s},\eta)= (1+\alpha) G_0(s^{1+\alpha}|\eta|,\tilde{s}^{1+\alpha}|\eta|,\hat{\eta}) \, |\eta|$. The difficulty involved in inverting $N(P)$ now involves mostly ensuring that the integral
\begin{equation} \label{eq:inv-FT}
\cK_{N(G)}(s,u,\widetilde{s},\widetilde{u}) = \int e^{i(u-\tilde{u})\eta}G_0(s^{1+\alpha}|\eta|,\tilde{s}^{1+\alpha}|\eta|,\hat{\eta})|\eta| d\eta  
\end{equation}
maps boundedly between the appropriate $L^2$ spaces, i.e. that $\widehat{N(G)}$ enjoys such mapping properties as implied by those of $G_0$. 

\begin{lem}
The operators $\widehat{N(G)}, \widehat{\pi}_{ker}/\widehat{\pi}_{coker}$ are bounded operators from $s^\delta\hat{H}_\alpha^{r}$ to $s^\delta \hat{H}_\alpha^{r+2m}$ and $s^\delta \hat{H}^{\ell}$ resp., for all $r,\ell$ and $\eta\neq 0$, with bounds independent of $\eta$. The spaces $s^\delta \hat{H}_\alpha^{r}$ are defined as in \ref{alpha-sobolev}, but with $L\in \Psi_\alpha^r$ replaced with $\widehat{N(L)}$ (in essence replacing derivatives with respect to $s^{1+\alpha}\p_u$ with multiplication by $s^{1+\alpha}\eta$)
\end{lem}
\begin{proof}
We begin by proving boundedness on $s^\delta L^2(ds)$. The change of variables 
\[ u(s)\mapsto u_{|\eta|}(t):= \tfrac{1}{1+\alpha} \left(\tfrac{1}{|\eta|}\right)^{1-\frac{\alpha}{1+\alpha}} u\left( t \right),  \quad t=(\tfrac{s}{|\eta|})^{\frac{1}{1+\alpha}} \]
is an isometry from $L^2(ds)$ to $t^{\frac{\alpha}{2(1+\alpha)}}L^2(dt)$, hence this coordinate change is an isomorphism from $s^\delta L^2(ds)$ to $t^{\frac{\delta}{1+\alpha} + \frac{\alpha}{2(1+\alpha)}}L^2(dt)$. If we denote this latter exponent by $\delta_0 = \tfrac{\delta}{1+\alpha} + \tfrac{\alpha}{2(1+\alpha)}$ we observe
\[   ||\widehat{G}u||_{s^\delta L_s^2}=||(\widehat{G}u)_{|\eta|}||_{t^{\delta_0} L_t^2} = ||G_0(u_{|\eta|}) ||_{t^{\delta_0}L_t^2}\leq C||u_{|\eta|}||_{t^{\delta_0} L_t^2} = C||u||_{s^\delta L_s^2}   \]
The inequality here will follow from by the boundedness of $G_0$ on $t^{\delta_0} L^2$ to itself, with constant $C$ independent of $|\eta|$, the operator norm of $G_0$. To conclude boundedness of $G_0$ in spite of this new exponent $\delta_0$, notice that our condition for $\delta$ regarding the indicial roots of $P$ is equivalent to the respective condition on $\delta_0$ and the indicial roots of $P_0$. Namely, from $(1+\alpha)s\p_s=\tau\p_\tau$ we have $\zeta \in \text{Spec}_b(P)$ if and only if $\tfrac{\zeta}{1+\alpha}\in \text{Spec}_b(P_0)$. So our original condition
\[ \delta\not\in \{\Re(\zeta) + \tfrac{1}{2}: \zeta\in \text{Spec}_b(P) \} \]
is equivalent to 
\[\delta_0 \not\in \{\Re(\tfrac{\zeta}{1+\alpha}) + \tfrac{1}{2}: \tfrac{\zeta}{1+\alpha}\in \text{Spec}_b(P_0)\}    \]
because
\[ \delta_0 - \tfrac{1}{2} = \tfrac{\delta}{1+\alpha} + \tfrac{\alpha}{2(1+\alpha)} - \tfrac{1}{2} = \tfrac{\delta - 1/2}{1+\alpha}.   \]

Similarly the boundedness from $\hat{H}^r$ to $\hat{H}^{r+m}$ follows from the boundedness of $\tau^m G_0$ and $(\tau\partial_\tau)^k G_0$ for all $k\leq m$. One obtains bounds for the remaining values of $r$ by duality and interpolation as usual. Arguing similarly proves the claim for $\widehat{\pi}_{ker}/\widehat{\pi}_{coker}$. 
\end{proof}

We obtain immediately by the lemma and Plancherel's formula the following corollary
\begin{cor} 
Given $\delta$ chosen as above, $N(P): s^\delta H_\alpha^{r+m}\to s^\delta H_\alpha^{r}$ has closed range. In particular there exists a generalized inverse $N(G)$ and projectors $\pi_{\ker(N(P))}$, $\pi_{\coker(N(P))}$ such that
\[ N(G): s^\delta H_\alpha^r \to s^\delta H_\alpha^{r+m}   \]
\[ \pi_{\emph{co/ker}(N(P))}: s^\delta H_\alpha^r \to s^\delta H_\alpha^\ell  \]
are bounded for all $r,\ell$.
\end{cor}

It remains to prove that the operators $N(G), \pi_{{\emph{co/ker}(N(P))}}$ as in \eqref{eq:inv-FT}, are in fact operators in the $\alpha$-calculus and thus will allow us to solve the model problem \eqref{model-problem} to iteratively improve the vanishing of our parametrix at the front face $\cB_{11}$. So far we have treated their Schwartz kernels as functions; multiplying by our section $\mu=\sqrt{ds\,du\,d\widetilde{s}\,d\widetilde{u}}$ of the half-density bundle induced by the choice of coordinates near $\cB_{11}$, we obtain $\cK_{N(G)} r^{-(1+(1+\alpha)n)/2}\mu$ and $\cK_{\pi_{(i)}} r^{-(1+(1+\alpha)n)/2}\mu$. Notice that we have taken shifted the degree $1+(1+\alpha)n$ homogeneity of the kernels onto the density factors, leaving $\cK_{N(G)}$ and $\cK_{\pi_{(i)}}$ homogeneous of order 0. Since $\cK_{N(G)}$ and $\cK_{\pi_{(i)}}$ are smooth in the interior and homogeneous of degree 0, they are smooth down to $\cB_{11}$, except for the singularity of $\cK_{N(G)}$ at the diagonal. 

To conclude these kernels define operators in the $\alpha$-calculus we need only show they are polyhomogeneous conormal at the side faces. We detail the proof for $\cB_{10}$ as the case of $\cB_{01}$ is identical. First we show that the asymptotic expansions defining their Schwartz kernels are well defined at the side faces. From smoothness in the interior and rapid decrease in either variable as $G_0$ and $\cK_{\pi_{i}}$ approach either $\cB_{10}$ (i.e. $s\to 0$), away from $\cB_{11}$, we can replace $G_0$ in the formula \eqref{eq:inv-FT} with any finite asymptotic sum plus a rapidly decreasing remainder. The rapid decrease in $|\eta|$ of this kernel, and smooth dependence on the variables $(s,u,\widetilde{u})$, implies the finite sum of integrals is well-defined and similarly for the remainder. 

Near the corners, the points in $\cB_{10}\cap \cB_{11}$ (resp. $\cB_{01}\cap \cB_{11}$). Now, as $s\to 0$, we can consider in lieu of $G_0$ in formula \eqref{eq:inv-FT} its expansion in $s$, and every term is of the form
\[ \cK_{N(G)}^{(\ell)} (\widetilde{s},u,\widetilde{u})= \int e^{i (u-\widetilde{u})\cdot\eta} G_{0,\ell}(\widetilde{s}^{1+\alpha}|\eta|, \hat{\eta})|\eta|^\ell d\eta,  \]
where $\ell\in \R$, and $G_{0,\ell}$ is rapidly decreasing and polyhomogeneous in $\widetilde{s}^{1+\alpha}|\eta|$. From this we conclude that $\cK_{N(G)}$ has a polyhomogeneous expansion. To conclude that the coefficients are all conormal i.e. of stable regularity measured with respect to vector fields tangent to $\cB_{10}\cap \cB_{11}$. 
Further, since we are away from $\cB_{01}$ we can use coordinates in which $\{ \tfrac{\widetilde{x}}{x} = 1, \tfrac{y-\widetilde{y}}{x^{1+\alpha}}=0 \} = \{ \widetilde{s}=1, \widetilde{u}=0 \}$. Each term in the expansion $s\to 0$ is evaluated at $s=0$, and $\widetilde{s}=1$ in these coordinates, so each term in the asymptotic expansion is of the form,
\[ \cK_{N(G)}^{(\ell)} (\widetilde{s},u,\widetilde{u})\biggr|_{\{\widetilde{s}=1, \widetilde{u}=0 \}}  = \int e^{i u\cdot\eta} G_{0,\ell}(|\eta|, \hat{\eta})|\eta|^\ell d\eta. \]
A brief calculation gives that the vector fields $\cV_b(\cB_{10}\cap \cB_{11})$ are spanned by projections of $u_j\p_{u_k}$ which preserve the regularity of $\cK_{N(G)}^{(\ell)}$. From this we can conclude that $N(G)\in \Psi_\alpha^\cH$, with index set as yet undetermined. 

It is clear from the previous discussion that the index set of $N(G)$ is the same as that of $G_0$, the model Bessel operator. This has already been computed, in \cite[Thm 4.20,Thm 6.1]{mazzeo}, and is as our theorem's conclusion.

$ $\linebreak
Finally, we can return to the semi-Fredholm case assumed away at the beginning. Assume that $\delta < \underline{\delta}$ so that $N(P)$ is surjective. The case $\delta>\overline{\delta}$ is treated analogously. Form a new operator $\mathfrak{P} = PP^*$, which is an operator of order $2m$ that can be extended to be self-adjoint. Since $N(\mathfrak{P})=N(P)N(P)^*$ this operator is an isomorphism as required. \end{proof}

Finally, we sketch the proof of Theorem~\ref{thm:phg_Grushin}
\begin{proof}[Proof of Theorem~\ref{thm:phg_Grushin}]
The proof is very analogous to the proof of Theorem 7.3 in \cite{mazzeo}, so we only give here an idea of the proof. Note that when $\alpha = 0$, $\Theta$ reduces to $\N$ as in \cite{mazzeo}.

One rewrites $Lu=0$ as
\begin{equation}
\label{eq:reduction}
I(L)u=Eu,
\end{equation}
where $E$ contains all of the terms of the form $ x^l b_{j,\beta}(x\p_x)^j(x^{1+\alpha}\p_y)^\beta$, where $l\geq 1$ if $j\neq 0$, and with $b_{j,\beta}$ are smooth. Since we have a parametrix in the small calculus and zero function belongs to $x^\delta H_\alpha^\infty(M,\omega_\alpha)$, we have $u\in x^\delta H_\alpha^\infty(M,\omega_\alpha) $ as well, and so we can assume without any loss of generality that $Eu\in x^\delta L^2(M,\Omega^{1/2}_\alpha)$. 

We take the Mellin transform on both sides of~\eqref{eq:reduction} giving us
$$
p(s)u_M(s) = (Eu)_M(s).
$$
For simplicity we assume that the indicial polynomial $p$ has no roots on the lines
$$
\Re s = \delta -\frac{1}{2} + \frac{1+n+\alpha n}{2}+ \theta, \qquad \theta \in \Theta.
$$
For brevity we denote
$$
\delta' = \delta + \frac{1+n+\alpha n}{2}.
$$
Since the roots of the indicial polynomial form a finite set, we have that
$$
u_M(s) = p(s)^{-1}(Eu)_M(s)
$$
is a meromorphic function on the half-plane 
$$
\Re s < \delta' -\frac{1}{2} .
$$
But by assumption $u\in x^\delta L^2(M,\omega_\alpha)$, so it is actually holomorphic on this half-plane and zeroes of $(Eu)_M$ cancel out the poles.

In order to obtain the first term of asymptotics, let $\theta_1$ be the first non-zero index in $\theta$. We have that $Eu$ is a sum of terms $x^l(x\p_x)^j (x^{1+\alpha} \p_y)^\beta u$. If $\beta = 0$, then since $u\in x^\delta H^\infty_\alpha(M,\omega_\alpha)$, we can interpret this terms as belonging to $x^{\delta' + 1} L^2(dx,L^2(dy)) \subset x^{\delta' + \theta_1}L^2(dx,H^{-1}(dy))$, since $\theta_1 \leq 1$. If $\beta\neq 0$, then $x^l$ is absorbed into the principal term $x^{1+\alpha}\p_y$, which again gives us a term of the form in $x^{\delta' + \theta_1}L^2(dx,H^{-1}(dy))$. Thus we see that $u_M$ can be meromorphically extended to the half-plane $\Re s < \delta'- 1/2+ \theta_1$ to a function that takes values in $H^{-1}(dy)$ and taking the inverse along $\Re s = \delta'- 1/2+ \theta_1$
$$
u - \sum x^{s}(\log x)^p u_{j,0,p}(y) \in x^{\delta' + \theta_1} L^2(dx, H^{-1}(dy)).
$$
where $s_j$ are the roots of the indicial polynomial in the half-space $\Re s < \delta'- 1/2+ \theta_1$. 

We then can continue this procedure by extending $u_M$ on each new stripe $\delta' - 1/2 +\theta_{N-1} < \Re s < \delta' - 1/2 +\theta_{N}$ to a meromorphic function with values in $H^{-N}(dy)$, which gives the full asymptotic expansion~\eqref{eq:polyhom_ass_sobolev}.

\end{proof}

\section*{Appendix A: Scalar curvature computations}

In this Appendix we prove Proposition~\ref{prop:curvature_ass} and compute the scalar curvature for the model space from the introduction. We use Cartan's method of moving frames for computing the scalar curvature in an orthonormal frame. The resulting formula is likely well-known to experts, but for the sake of completeness we give a derivation here. We will use Einstein summation convention over repeated indices unless it is stated explicitly otherwise.

Let $M$ be a smooth $n$-dimensional Riemannian manifold. Unless stated otherwise all lower-case letter indices range from one to $n$. Let $X_i$ be a local orthonormal frame of a Riemannian manifold and $\theta^i$ be the dual frame. Given a Levi-Cevita connection $\nabla$, we can define the Christofel symbols $\Gamma^i_{jk}$ as
$$
\nabla_{X_i}X_j = \Gamma^k_{ij}X_k.
$$
We can equivalently define a connection as a family of one forms $\theta^i_j$, via
$$
\nabla_X(X_i) = \theta_i^j(X) X_j,
$$
which translates to 
\begin{equation}
\label{eq:forms}
\theta^i_j = \Gamma^i_{kj}\theta^k.
\end{equation}
The metric compatibility condition in terms of Christofel symbols reads as
$$
\Gamma^i_{jk} + \Gamma^k_{ji} = 0,
$$
which in the language of forms is equivalent to
$$
\theta^i_j + \theta^j_i = 0.
$$
Note that in particular $\theta^i_i = 0$ and $\Gamma^i_{ji} = 0$.

The following theorem is the base of Cartan's moving frame method.
\begin{thm}[\cite{dg_cartan}, Theorem 2.1]
Let $M$ be a Riemannian manifold. Suppose that $\theta^i$ is a co-frame dual to an orthonormal frame and that $\theta^i_j$ are one-forms which define the Levi-Cevita connection of this metric. Then
\begin{equation}
\label{eq:cartan}
d\theta^i = \theta^j \wedge \theta^i_j.
\end{equation} 
\end{thm}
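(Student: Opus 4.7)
The plan is to verify the identity pointwise by evaluating both sides on a pair of vector fields from the orthonormal frame, say $(X_k, X_l)$, and checking that the resulting scalar expressions agree. Both sides of \eqref{eq:cartan} are $2$-forms on $M$, so showing equality on all pairs $(X_k, X_l)$ (with $k<l$) suffices. This reduces the statement to an algebraic identity among the Christoffel symbols $\Gamma^i_{jk}$, which will follow from the torsion-free property of the Levi-Civita connection.

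First I would compute the left-hand side using the intrinsic formula for the exterior derivative of a one-form,
\begin{equation*}
d\eta(X,Y) = X(\eta(Y)) - Y(\eta(X)) - \eta([X,Y]).
\end{equation*}
Because $\theta^i(X_j) = \delta^i_j$ is a constant, the first two terms in $d\theta^i(X_k, X_l)$ vanish, leaving $-\theta^i([X_k, X_l])$. The torsion-free condition $\nabla_X Y - \nabla_Y X = [X,Y]$ applied to the frame gives
\begin{equation*}
[X_k, X_l] = (\Gamma^m_{kl} - \Gamma^m_{lk}) X_m,
\end{equation*}
so $d\theta^i(X_k, X_l) = \Gamma^i_{lk} - \Gamma^i_{kl}$.

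For the right-hand side, I would use the expression $\theta^i_j = \Gamma^i_{mj}\theta^m$ from \eqref{eq:forms} together with the standard formula $(\alpha \wedge \beta)(X,Y) = \alpha(X)\beta(Y) - \alpha(Y)\beta(X)$ for the wedge of two one-forms. A direct computation gives
\begin{equation*}
(\theta^j \wedge \theta^i_j)(X_k, X_l) = \theta^i_k(X_l) - \theta^i_l(X_k) = \Gamma^i_{lk} - \Gamma^i_{kl},
\end{equation*}
matching the left-hand side.

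There is no serious obstacle; the argument is a couple of lines of bookkeeping. The only conceptual point worth emphasizing is that the identity \eqref{eq:cartan} encodes exactly the torsion-freeness of $\nabla$. Metric compatibility, which yields $\theta^i_j + \theta^j_i = 0$, plays no role in this first structural equation but is what makes the connection forms uniquely determined by \eqref{eq:cartan} together with the skew-symmetry condition.
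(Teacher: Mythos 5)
Your proof is correct: evaluating both sides on frame pairs $(X_k,X_l)$ and using torsion-freeness of the Levi-Civita connection is the standard verification of the first structure equation, and your index bookkeeping is consistent with the paper's conventions $\nabla_{X_i}X_j=\Gamma^k_{ij}X_k$ and $\theta^i_j=\Gamma^i_{kj}\theta^k$. The paper itself gives no proof (it cites the result from an external reference), so there is nothing to compare against; your closing remark that metric compatibility is irrelevant here but is what makes the $\theta^i_j$ unique is also accurate.
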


Using the connection forms $\theta^i_j$ we can compute the curvature. Define
\begin{equation}
\label{eq:curvature_form}
\Omega^j_i = d\theta^j_i + \theta^j_k \wedge \theta^k_i.
\end{equation}
\begin{prop}[\cite{dg_cartan}, Proposition 2.3f]
We have
$$
\Omega^i_j = R^i_{jkl} \theta^k \wedge \theta^l,
$$
where $R^i_{jkl}$ are the components of the Riemann tensor in the orthonormal frame $X_i$.
\end{prop}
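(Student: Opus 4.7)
The plan is to verify the stated identity by evaluating both sides on an arbitrary pair of frame vectors $(X_p, X_q)$; since both sides are $2$-forms, this pointwise comparison on a basis determines them completely.

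Starting from the definition \eqref{eq:curvature_form}, I would compute
$$
\Omega^i_j(X_p, X_q) = d\theta^i_j(X_p, X_q) + (\theta^i_k \wedge \theta^k_j)(X_p, X_q).
$$
Applying the intrinsic Cartan formula $d\omega(X,Y)=X(\omega(Y))-Y(\omega(X))-\omega([X,Y])$ together with \eqref{eq:forms} (so that $\theta^i_j(X_m) = \Gamma^i_{mj}$), and writing $[X_p, X_q] = c^m_{pq}\, X_m$ for the structure coefficients, the first piece expands to $X_p(\Gamma^i_{qj}) - X_q(\Gamma^i_{pj}) - \Gamma^i_{mj}\,c^m_{pq}$, while the wedge term expands to $\Gamma^i_{pk}\Gamma^k_{qj} - \Gamma^i_{qk}\Gamma^k_{pj}$.

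On the Riemann side, I would unwind the definition $R(X_p, X_q)X_j = \nabla_{X_p}\nabla_{X_q}X_j - \nabla_{X_q}\nabla_{X_p}X_j - \nabla_{[X_p,X_q]}X_j$ and substitute $\nabla_{X_m}X_j = \Gamma^i_{mj}X_i$ twice. Collecting the coefficient of $X_i$ yields exactly the same expression
$$
R^i_{jpq} = X_p(\Gamma^i_{qj}) - X_q(\Gamma^i_{pj}) + \Gamma^i_{pk}\Gamma^k_{qj} - \Gamma^i_{qk}\Gamma^k_{pj} - \Gamma^i_{mj}\,c^m_{pq},
$$
so $\Omega^i_j(X_p, X_q) = R^i_{jpq}$. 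Using antisymmetry of $R$ in its last two indices,
$$
(R^i_{jkl}\,\theta^k \wedge \theta^l)(X_p, X_q) = R^i_{jpq} - R^i_{jqp} = 2R^i_{jpq},
$$
which recovers the claim under the standard convention that the implicit sum is restricted to $k<l$ (equivalently, the full sum carries a factor $\tfrac{1}{2}$).

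The proof is essentially an exercise in bookkeeping: there is no genuine analytic or geometric obstacle. The only point that requires care — and the most likely source of error — is aligning sign and index conventions with those of the cited source \cite{dg_cartan}, in particular the factor of $\tfrac{1}{2}$ implicit in the wedge-product summation convention and the ordering of the lower indices of $R^i_{jkl}$. Once those conventions are pinned down at the outset, the two sides match term by term.
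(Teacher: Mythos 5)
Your verification is correct, and it is worth noting that the paper itself offers no proof of this proposition at all: it is imported verbatim from the cited reference \cite{dg_cartan} as a known fact, so there is nothing in the text to compare against. Your argument — evaluating $\Omega^i_j = d\theta^i_j + \theta^i_k\wedge\theta^k_j$ on a pair of frame vectors via the intrinsic formula for $d$, expanding $R(X_p,X_q)X_j$ through the Christoffel symbols and the structure coefficients $c^m_{pq}$, and matching coefficients — is the standard derivation of the second structure equation, and all the terms line up as you claim.

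The one substantive point you raise, the factor of $2$ (equivalently whether the sum $R^i_{jkl}\theta^k\wedge\theta^l$ runs over all $(k,l)$ or only $k<l$), is a genuine ambiguity in the statement as written, and you resolve it correctly. It is consistent with how the proposition is actually used downstream: in \eqref{eq:scalar} and the lemma that follows, the authors only ever evaluate $\Omega^j_i$ directly on the frame vectors $(X_j,X_i)$, so the normalization of the coefficient expansion never enters their computation, and your reading (antisymmetrized sum, i.e.\ $\Omega^i_j(X_p,X_q)=R^i_{jpq}$) is the one that makes $\sum_i\Omega^j_i(X_j,X_i)$ equal to the usual Ricci contraction. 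No gaps.
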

This correspondence allows us to compute the scalar curvature of $M$ as
\begin{equation}
\label{eq:scalar}
S = \sum_{i=1}^n\Omega^j_i(X_j,X_i).
\end{equation}
\begin{lem}
Scalar curvature of $M$ can be computed as
\begin{equation}
\label{eq:scalar_new}
S = \sum_{i=1}^n 2X_j[\Gamma^j_{ii}] + \Gamma^j_{ki}\Gamma^k_{ij} + \Gamma^j_{jk}\Gamma^k_{ii} - \Gamma^j_{ki}\Gamma^k_{ji} - \Gamma^j_{ik}\Gamma^k_{ji}.
\end{equation}
\end{lem}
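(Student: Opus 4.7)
The plan is to expand the definition~\eqref{eq:scalar}, $S = \sum_{i=1}^n \Omega^j_i(X_j,X_i)$, using the Cartan structure equation~\eqref{eq:curvature_form}, which gives $\Omega^j_i = d\theta^j_i + \theta^j_k \wedge \theta^k_i$, and evaluate each piece in the orthonormal frame. For the wedge contribution, the identity $\theta^a_b(X_c) = \Gamma^a_{cb}$ from~\eqref{eq:forms} yields directly
$$
(\theta^j_k \wedge \theta^k_i)(X_j, X_i) = \theta^j_k(X_j)\,\theta^k_i(X_i) - \theta^j_k(X_i)\,\theta^k_i(X_j) = \Gamma^j_{jk}\Gamma^k_{ii} - \Gamma^j_{ik}\Gamma^k_{ji}.
$$

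For the exterior derivative I would apply $d\alpha(Y,Z) = Y[\alpha(Z)] - Z[\alpha(Y)] - \alpha([Y,Z])$ with $\alpha = \theta^j_i$, $Y = X_j$, $Z = X_i$. Since the Levi-Civita connection is torsion-free, $[X_j, X_i] = \nabla_{X_j}X_i - \nabla_{X_i}X_j = (\Gamma^m_{ji} - \Gamma^m_{ij})X_m$, so evaluating and substituting $\theta^j_i(X_m) = \Gamma^j_{mi}$ produces
$$
d\theta^j_i(X_j, X_i) = X_j[\Gamma^j_{ii}] - X_i[\Gamma^j_{ji}] - \Gamma^k_{ji}\Gamma^j_{ki} + \Gamma^k_{ij}\Gamma^j_{ki}.
$$

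The decisive simplification uses metric compatibility $\Gamma^j_{ji} = -\Gamma^i_{jj}$ to rewrite the second derivative term: after relabeling the dummy indices $i \leftrightarrow j$ in the double sum (both are summed), one obtains $-X_i[\Gamma^j_{ji}] = +X_j[\Gamma^j_{ii}]$, which combines with the first derivative contribution to produce the prefactor $2$ in the first term of the claimed formula. For the two quadratic contributions coming from the bracket, the relabeling $j \leftrightarrow k$ transforms $-\Gamma^k_{ji}\Gamma^j_{ki}$ into $-\Gamma^j_{ki}\Gamma^k_{ji}$, while the trivial commutation of scalar factors in $+\Gamma^k_{ij}\Gamma^j_{ki}$ yields $+\Gamma^j_{ki}\Gamma^k_{ij}$. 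Adding these to $\Gamma^j_{jk}\Gamma^k_{ii} - \Gamma^j_{ik}\Gamma^k_{ji}$ from the wedge produces exactly the claimed expression.

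The only real obstacle is careful index bookkeeping: one must invoke the skew-symmetry $\Gamma^k_{ij} = -\Gamma^j_{ik}$ in the correct index slot before renaming dummies in order to correctly produce the factor of $2$ in front of the derivative term, and track the various relabelings so that every term of the bracket contribution is brought into the standard form stated in the lemma. No deeper geometric input beyond~\eqref{eq:cartan}, \eqref{eq:curvature_form}, \eqref{eq:forms}, the torsion-free condition, and orthonormality of the frame enters the proof.
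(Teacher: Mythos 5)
Your proof is correct and follows essentially the same route as the paper: both expand $\Omega^j_i = d\theta^j_i + \theta^j_k\wedge\theta^k_i$, evaluate on $(X_j,X_i)$, and invoke the skew-symmetry $\Gamma^j_{ji}=-\Gamma^i_{jj}$ with a dummy-index relabeling to produce the factor $2$. The only cosmetic difference is that you compute $d\theta^j_i(X_j,X_i)$ via the invariant formula for the exterior derivative together with torsion-freeness of the bracket, whereas the paper applies Leibniz to $\Gamma^j_{ki}\theta^k$ and then substitutes the first structure equation \eqref{eq:cartan} — these are equivalent inputs, and both arrive at the same intermediate identity before the final simplification.
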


\begin{proof}
Using~\eqref{eq:curvature_form} and~\eqref{eq:forms} we have
$$
\Omega^j_i = d(\Gamma^j_{ki}\theta^k) + \Gamma^j_{kl}\Gamma^k_{mi} \theta^l \wedge \theta^m = X_l[\Gamma^j_{ki}] \theta^l \wedge \theta^k + \Gamma^j_{ki}d\theta^k + \Gamma^j_{lk}\Gamma^k_{mi} \theta^l \wedge \theta^m.
$$
Combining this with~\eqref{eq:cartan} and~\eqref{eq:forms} we obtain
\begin{align*}
\Omega^j_i &= X_l[\Gamma^j_{ki}] \theta^l \wedge \theta^k + \Gamma^j_{ki}\Gamma^k_{ml}\theta^l \wedge \theta^m + \Gamma^j_{lk}\Gamma^k_{mi} \theta^l \wedge \theta^m = \\
&= (X_l[\Gamma^j_{mi}] + \Gamma^j_{ki}\Gamma^k_{ml} + \Gamma^j_{lk}\Gamma^k_{mi}) \theta^l \wedge \theta^m.
\end{align*}
So we find
$$
S = \sum_{i=1}^n X_j[\Gamma^j_{ii}] - X_i[\Gamma^j_{ji}]+\Gamma^j_{ki}\Gamma^k_{ij} + \Gamma^j_{jk}\Gamma^k_{ii} - \Gamma^j_{ki}\Gamma^k_{ji} - \Gamma^j_{ik}\Gamma^k_{ji}.
$$
Using the skew-symmetry of Christofel symbols gives us exactly the formula~\eqref{eq:scalar_new}.
\end{proof}

Let us compute the scalar curvature of the model $\alpha$-Grushin plane. Recall that is simply $\R^{n+1}$ with coordinates $(x,y_1,\dots,y_n)$ a metric given by
$$
g = \begin{pmatrix}
1 & 0 \\
0 & |x|^{-2\alpha} \id_n 
\end{pmatrix}.
$$
We have the following orthonormal vector fields
$$
X_0 = \p_{x}, \qquad X_i = |x|^\alpha \p_{y_{i}}
$$
and the dual forms
$$
\theta^0 = dx, \qquad \theta^i = \frac{dy_i}{|x|^\alpha}.
$$
We plug those into the Cartan structure equations~\eqref{eq:cartan}. Using the skew-symmetry we obtain
$$
d\theta^0 = \theta^i \wedge \theta_i^0  \iff 0 =\Gamma_{ji}^0 \theta^i \wedge \theta^j.
$$
Thus $\Gamma_{ji}^0= 0 $ if $i\neq j$. Before computing the second set of structure equations note that
$$
d\theta^i = -\alpha\frac{\sign(x) dx \wedge dy_i}{|x|^{1+\alpha}} = -\frac{\alpha}{x} \theta^0 \wedge \theta^i.
$$
Thus the second set of structure equations gives us
$$
d\theta^i = \theta^0 \wedge \theta_0^i + \theta^j \wedge \theta_j^i  \iff -\frac{\alpha}{x}\theta^0 \wedge \theta^i =\Gamma_{j0}^i \theta^0 \wedge \theta^j + \Gamma^i_{kj} \theta^j \wedge \theta^k.
$$
Thus $\Gamma^i_{kj}=0$ and the only non-zero Christofel symbol is
$$
\Gamma^i_{i0} = -\frac{\alpha}{x}.
$$
We can now apply directly the formula~\eqref{eq:scalar_new}. We obtain writing explicitly all of the summations
\begin{equation}
\label{eq:grushin_cruvature}
S = \sum_{i=1}^n \left(2X_0[\Gamma_{ii}^0] - \Gamma^i_{i0}\Gamma_{i0}^i + \sum_{j=1}^n \Gamma^j_{j0}\Gamma^0_{ii}\right) = - \frac{2\alpha n + \alpha^2 n + \alpha^2 n^2}{x^2}.
\end{equation}

We are now ready to prove Proposition~\ref{prop:curvature_ass}.

\begin{proof}[Proof of Proposition~\ref{prop:curvature_ass}]
Consider the same coordinates as in the definition~\ref{def:grushin}. Define a companion metric in this tubular neighborhood
$$
\tilde{g} = dx^2 + g_{x}.
$$
Since the neighborhood is foliated by the level sets of the distance function to the boundary, we can take $X_0 = \p_x$ and $Y_i $ to form a local orthonormal frame for the companion metric in a way, that $Y_i$ generate the tangent space to the leafs. 

Denote by capital letters $A,B,C,...$ indices in the range $0,\dots,n$. Since $Y_i$ form an involutive distribution, we have that structure constants $c_{AB}^0$ must be zero. Also, since $Y_0 = \p_x$, we have $c_{0A}^0= 0$.

We introduce now an orthonormal frame for the metric~\eqref{eq:metric_grushin}:
$$
X_0 = Y_0, \qquad X_i = |x|^\alpha Y_i.
$$
We have
\begin{align*}
[X_0,X_i] &= \frac{\alpha \sign(x)}{|x|}X_i+c^j_{0i}X_j,\\
[X_i,X_j] &= |x|^\alpha c^k_{ij}X_k.
\end{align*}
Hence we can define the new structure constants
$$
\bar c_{0i}^j = \left(\frac{\alpha}{x} \delta^j_i+c^j_{0i}\right), \qquad \bar c_{ij}^k = |x|^\alpha c_{ij}^k. 
$$

In order to compute the scalar curvature using formula~\eqref{eq:scalar_new} we need to find the Christofel symbols. Using Koszul's formula one can find (see~\cite[Theorem 16.21]{abb}):
$$
\bar{\Gamma}^C_{AB} = \frac{1}{2}\left(\bar c_{AB}^C - \bar c^A_{BC} + \bar c^B_{CA} \right).
$$
We denote by $\Gamma^C_{AB}$ the Christofel symbols of the Levi-Cevita connection of the companion metric. We have 
\begin{align*}
\bar{\Gamma}^0_{ij} &= \frac{\alpha}{x}\delta_{ij} + \Gamma^0_{ij},\\
\bar{\Gamma}^i_{0j} &= \Gamma^i_{0j},\\
\bar{\Gamma}^k_{ij} &= |x|^\alpha \Gamma^k_{ij}.
\end{align*}

We can now use formula~\eqref{eq:scalar_new} to obtain the main term of asymptotics of $S$ for $\alpha> -1$ as $x\to 0+$. First of all, notice that the from the explicit formula the Christofel symbols it follows that $S$ has an asymptotic expansion with the index set $-2 + \Theta$, where $\Theta$ is the index set from~\ref{thm:phg_Grushin}. Now we discuss the principle order of each term in~\eqref{eq:scalar_new}. We have 
$$
X_j[\bar\Gamma^j_{ii}] = O(|x|^{2\alpha}),
$$
as $x\to 0$. Similarly we have
$$
X_0[\bar\Gamma^0_{ii}] = O(x^{-2}),
$$
All the remaining terms are multiplications of the Christofel symbols. If $\alpha>-1$, then clearly the principal term should be of order $x^{-2}$, which comes from multiplications of $\bar\Gamma^0_{ii}$. But then the leading term is exactly the second expression in~\eqref{eq:grushin_cruvature} and the same calculation shows that  
$$
S = -\frac{2\alpha n + \alpha^2 n +\alpha^2n^2}{x^2} + o(|x|^{-2}), \qquad x\to 0.
$$

\end{proof}

\section*{Acknowledgements}
We would like to thank Matteo Gallone for useful discussions regarding the description of natural domains that allowed us to correct some errors in the first draft of this text. 

The first author was supported by the French government through the France 2030 investment plan managed by the
National Research Agency (ANR), as part of the Initiative of Excellence of Université Côte d’Azur under reference
number ANR-15-IDEX-01 as well as through the CIDMA Center for Research and Development in Mathematics and Applications, and the Portuguese Foundation for Science and Technology (“FCT - Funda\c c\~ao para a Ci\~encia e a Tecnologia”) within the project UIDP/04106/2020 and UIDB/04106/2020.

\bibliographystyle{plain}
\bibliography{references}

\end{document}